\def\input@path{{figures/}}
\numberwithin{equation}{section}
\newtheorem{theorem}{\bf Theorem}[section]
\newtheorem{lemma}[theorem]{\bf Lemma}
\newtheorem{corollary}[theorem]{\bf Corollary}
\newtheorem{proposition}[theorem]{\bf Proposition}
\newtheorem{observation}{\noindent {\bf Observation}}
\newtheorem{note}{\bf Note}[section]
\newtheorem{claim}{\noindent {\bf Claim}}
\newtheorem{case}{\noindent {\bf Case}}
\newtheorem{fact}{\noindent{\bf Fact}}
\newcommand{\NN}{{\mathbb N}}
\DeclareMathOperator{\age}{Age}
\newtheorem{remark}[theorem]{Remark}
\def\endproof{\hfill {\kern 6pt\penalty 500
\raise -0pt\hbox{\vrule \vbox to5pt {\hrule width 5pt
\vfill\hrule}\vrule}}}
\newcommand{\qedclaim}{\hfill $\diamond$}
\newenvironment{proofclaim}{\noindent{\bf \emph{Proof.}}}{\qedclaim\medbreak}
\title[Hereditary classes of ordered binary structures]{Hereditary classes of ordered binary structures}
\author[D.Oudrar] {Djamila Oudrar}
\thanks{*The author was supported by CMEP-Tassili grant}
\address{Faculty of Mathematics, USTHB, Algiers, Algeria}
\email {dabchiche@usthb.dz; djoudrar@gmail.com}
\date{\today}
\begin{document}

\begin{abstract}
Balogh, Bollob\'{a}s and Morris (2006) have described a threshold phenomenon in the behavior of the profile of hereditary classes of ordered graphs. In this paper, we give an other look at their result based on the notion of monomorphic decomposition of a relational structure introduced in \cite{P-T-2013}. We prove that the class $\mathfrak S$ of ordered binary structures which do not have a finite monomorphic decomposition has a finite basis (a subset $\mathfrak A$ such that every member of $\mathfrak S$ embeds some member of $\mathfrak A$). In the case of ordered reflexive directed graphs, the basis has 1242 members and the profile of their ages grows at least as the Fibonacci function. From this result, we deduce that the following dichotomy property holds for every hereditary class $\mathfrak C$ of finite ordered binary structures of a given finite type. Either there is an integer $\ell$ such that every member of $\mathfrak C$ has a monomorphic decomposition into at most $\ell$ blocks and in this case the profile of $\mathfrak C$ is bounded by a polynomial of degree $\ell-1$ (and in fact is a polynomial), or $\mathfrak C$ contains the age of a structure which does not have a finite monomorphic decomposition, in which case the profile of  $\mathfrak C$ is bounded below by the Fibonacci function.
\end{abstract}

\maketitle

 \noindent {\bf AMS Subject Classification:} 05C30, 06F99, 05A05, 03C13.

\vspace{.08in} \noindent \textbf{Keywords}: profile, monomorphic decomposition, ordered graphs, ordered binary relational structures, ordered set, well quasi-ordering.

\section{Introduction and presentation of the results}
The framework of this paper is the theory of relations. It is about a counting function, the profile. The \emph{profile} of a class  $\mathfrak {C}$  of finite relational structures (also called the \emph{speed} by other authors)  is the integer function  $\varphi _{\mathfrak{C}}$  which counts for each non negative integer $n$  the number $\varphi _{\mathfrak{C}}(n)$ of members of   $\mathfrak{C}$ defined
on $n$ elements, isomorphic structures being  identified.  For the last fifteen years, the behavior of this function has been discussed in many papers, particularly
when  $\mathfrak{C}$ is
\emph{hereditary} (that is contains  every substructure of any member of  $\mathfrak{C}$) and is made of graphs (directed or not), of tournaments, of ordered sets, of ordered  graphs and of ordered hypergraphs. As observed by P. J. Cameron \cite{cameron}, numerous results obtained about classes of permutations obtained during the same period fall under the frame of the profile of hereditary classes of relational
structures, namely \emph{bichains}, that is
structures made of two linear orders on the same set. The results show that the profile cannot be arbitrary: there are \emph{jumps} in its possible growth rates. Typically, its growth  is polynomial or faster than every polynomial (\cite{pouzet.tr.1978} for ages, see  \cite{pouzet06} for a survey) and for several classes of structures, it is either at least exponential (e.g. for tournaments \cite{BBM07,Bou-Pouz}, ordered graphs and hypergraphs \cite{B-B-M06,B-B-M06/2,klazar1} and permutations \cite{kaiser-klazar})
or  at least with the growth of the partition function (e.g. for graphs \cite {B-B-S-S}). For more, see  the survey of Klazar \cite{klazar} and for permutations the survey of Vatter \cite{vatter4}.

This paper is motivated by Balogh, Bollob\'{a}s and Morris results about the profile of hereditary classes of ordered graphs.
They show in  \cite{B-B-M06/2} that if $\mathfrak C$ is a hereditary class of finite ordered graphs then its profile $\varphi_{\mathfrak C}$ is either polynomial  or is ranked by the Fibonacci functions.

Their theorem states:
\begin{theorem}\label{thm:BBM}
If $\mathfrak C$ is a hereditary class of finite  ordered graphs, then one of the following assertions holds.
\begin{enumerate}
\item[(a)] $\varphi_{\mathfrak C}(n)$ is bounded above and there exist $M, N\in\NN$ such that $\varphi_{\mathfrak C}(n)=M$ for every $n\geq N$.
\item[(b)] $\varphi_{\mathfrak C}(n)$ is a polynomial in $n$. There exist $k\in\NN$ and integers $a_0,\dots,a_k$ such that, $\varphi_{\mathfrak C}(n)=\underset{i=0}{\overset{k}{\sum}}a_i\binom{n}{i}$ for all sufficiently large $n$, and $\varphi_{\mathfrak C}(n)\geq n$ for every $n\in\NN$.
    \item[(c)] $F_{n,k}\leq \varphi_{\mathfrak C}(n)\leq p(n)F_{n,k}$ for every $n\in\NN$, for some $2\leq k\in\NN$ and some polynomial $p$, so in particular $\varphi_{\mathfrak C}(n)$ is exponential.
\item[(d)] $\varphi_{\mathfrak C}(n)\geq 2^{n-1}$ for every $n\in\NN$.
\end{enumerate}
\end{theorem}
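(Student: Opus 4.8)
The plan is to route the four-way dichotomy through the notion of monomorphic decomposition, exploiting the two results announced above: that the class $\mathfrak{S}$ of ordered binary structures without a finite monomorphic decomposition admits a finite basis $\mathfrak{A}$, and that for a hereditary class $\mathfrak{C}$ exactly one of two alternatives holds --- either the number of blocks in a monomorphic decomposition is uniformly bounded by some $\ell$, or $\mathfrak{C}$ contains the age of a member of $\mathfrak{S}$. First I would recall that a monomorphic decomposition of an ordered graph is a partition of its vertex set into intervals (blocks) of the underlying linear order such that the isomorphism type of any induced substructure is determined solely by the sizes of its intersections with the blocks. The strategy is then to show that the bounded-block alternative yields the polynomial regime (cases (a) and (b)) and the unbounded alternative yields the exponential regime (cases (c) and (d)).

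For the polynomial regime, suppose every member of $\mathfrak{C}$ decomposes into at most $\ell$ blocks. Then an induced substructure on $n$ vertices is specified, up to isomorphism, by the $\ell$-tuple of intersection sizes with the ordered blocks, so $\varphi_{\mathfrak{C}}(n)$ is at most the number of compositions of $n$ into at most $\ell$ nonnegative parts, namely $\binom{n+\ell-1}{\ell-1}$, a polynomial of degree $\ell-1$ in $n$. A stabilization argument --- the type read off a tuple no longer changes under small perturbations once each block is large --- upgrades this upper bound to an exact polynomial for large $n$. The split between (a) and (b) is then whether the controlling block count forces constancy ($\ell=1$, the eventually-constant case (a)) or genuine growth; in the latter case a single nontrivial block already produces at least $n$ distinct types, securing $\varphi_{\mathfrak{C}}(n)\geq n$ of case (b).

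For the exponential regime, $\mathfrak{C}$ contains the age of some $S\in\mathfrak{S}$, and since $\mathfrak{A}$ is a basis, $S$ embeds a member $A$ of $\mathfrak{A}$; by heredity the whole age of $A$ lies in $\mathfrak{C}$. It therefore suffices to bound $\varphi$ from below on the age of each of the finitely many basis obstructions, which is a finite verification: inside each $A\in\mathfrak{A}$ one exhibits an unbounded sequence of vertices whose mutual incidences can be prescribed by any binary word avoiding a fixed forbidden factor, a family enumerated by the Fibonacci function, yielding at least $F_{n,k}$ pairwise non-isomorphic substructures on $n$ vertices and hence the lower bound common to (c) and (d).

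The main obstacle is the finer separation within the exponential regime: the monomorphic-decomposition dichotomy only delivers the clean polynomial-versus-Fibonacci jump, whereas the statement further distinguishes the capped case (c), where $\varphi_{\mathfrak{C}}(n)\leq p(n)F_{n,k}$, from the saturated case (d), where $\varphi_{\mathfrak{C}}(n)\geq 2^{n-1}$. Obtaining the matching upper bound of (c) requires controlling how much the induced type can vary across the ordered blocks --- essentially bounding a width parameter of the family of obstructions actually present --- and pinning down the threshold at which this variation saturates to full binary freedom. I expect this to be where the bulk of the technical work concentrates, and to demand the detailed encoding and counting estimates of Balogh, Bollob\'as and Morris rather than the decomposition machinery alone.
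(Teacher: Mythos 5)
There is a genuine gap, and it is the one you yourself flag in your last paragraph: your argument does not prove Theorem \ref{thm:BBM}, it proves the paper's weaker dichotomy. This paper never reproves the Balogh--Bollob\'as--Morris theorem; it quotes it from \cite{B-B-M06/2}, and the introduction states explicitly that the monomorphic-decomposition technique ``gives the jump of the profile between polynomials and the ordinary Fibonacci function $F_{-,2}$ but does not give the hierarchy given in $(c)$ and $(d)$.'' Your route through Theorem \ref{thm:dichotomy-binary structure}, Corollary \ref{cor:first item}, the finite basis of Theorem \ref{thm:graph-ordonne} and the profile computations of Proposition \ref{prop:profil-ordonne} is exactly the paper's route, but what it delivers is: polynomially bounded profile, or profile bounded below by the \emph{ordinary} Fibonacci function. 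It supplies no proof of the upper bound $\varphi_{\mathfrak C}(n)\leq p(n)F_{n,k}$ in case (c), no proof of the lower bound $2^{n-1}$ in case (d), and no mechanism producing the generalized Fibonacci exponents $F_{n,k}$ for each $k\geq 2$: the basis members' profiles are bounded below by one of five specific functions ($2^n-1$, $2^n-n$, $2^{n-1}$, $2^{n-1}+1$, Fibonacci), so in your treatment cases (c) and (d) collapse into ``at least Fibonacci.'' Diagnosing that the remaining separation ``demands the detailed encoding and counting estimates of Balogh, Bollob\'as and Morris'' is an accurate assessment, not a proof step; as written, the bulk of the stated theorem is deferred to the very paper being reproved.

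Two further soft spots on the polynomial side. Exactness of the polynomial in case (b) is not a ``stabilization'' triviality: the paper obtains quasi-polynomiality of the profile from \cite{P-T-2013} and polynomiality for ordered structures from the separate work \cite{oudrar-pouzet}, both substantial results resting on the interval-decomposition characterization (Theorem \ref{thm:charcfinitemonomorph}); your sketch gives only the upper bound $\binom{n+\ell-1}{\ell-1}$. And your criterion for case (a) --- that eventual constancy corresponds to $\ell=1$ --- is wrong: a structure with one infinite monomorphic block plus finitely many singleton blocks has eventually constant profile with $\ell>1$, so the (a)/(b) split, including the claim $\varphi_{\mathfrak C}(n)\geq n$ for all $n$ in case (b), needs an argument you do not give. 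In sum, the proposal correctly reconstructs the paper's program and its conclusions, but those conclusions are the paper's unnumbered dichotomy theorem for ordered binary structures, not the four-case hierarchy of Theorem \ref{thm:BBM}.
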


Here,   $F_{n,k}$ denote the $n^{\text{th}}$ generalized Fibonacci number of order $k$, defined by $F_{n,k}=0$ if $n<0$, $F_{0,k}=1$ and $F_{n-1,k}=F_{n-2,k}+F_{n-3,k}+\dots+F_{n-k,k}$ for every $n\geq 1$. For $k=2$, $F_{n,2}$ is the Fibonacci number $F_n$.

Their result extends Kaiser and Klazar result for classes of permutations (see \cite{kaiser-klazar}).

\smallskip

In this paper, we give an other look at Balogh, Bollob\'{a}s and Morris result using notions of the theory of relations and notably the notion of monomorphic decomposition of a relational structure (described in Pouzet and Thi\'ery 2013 \cite{P-T-2013}).

Our technique allows to characterize the hereditary classes of ordered directed graphs and more generally ordered binary structures which have a polynomially bounded profile. It gives the jump of the profile between polynomials and the ordinary  Fibonacci function $F_{-,2}$ but does not gives the hierarchy given in $(c)$ and $(d)$ of Theorem \ref{thm:BBM}.

We recall that a \emph{monomorphic decomposition} of a relational structure $\mathcal R$ defined on a set $V$ is a partition $(V_i)_{i\in I}$ of $V$ such that the induced structures $\mathcal R_{\restriction A}$ and $\mathcal R_{\restriction A'}$ on two finite subsets $A,A'$ of $V$ are isomorphic provided that the sets $A\cap V_i$ and $A'\cap V_i$ have the same cardinality for each $i\in I$. We also recall that the \emph {age} of a relational structure $\mathcal R$ is the set $\age(\mathcal R)$ of finite induced substructures of $\mathcal R$ considered up to isomorphism. We will call \emph{profile of } $\mathcal R$, denoted by $\varphi_{\mathcal R}$,  the profile of its age $\age (\mathcal R)$.

We prove first a dichotomy  result about this notion:
\begin{theorem}\label{thm:dichotomy-binary structure}
Let $\mathfrak C$ be a hereditary class of finite relational structures with a fixed finite signature. Then
\begin{enumerate}
\item\label{itemtheo1} Either  $\mathfrak C$ is a finite union of ages of relational structures, each admitting a finite monomorphic decomposition. \item\label{itemtheo2} Or $\mathfrak C$ contains the age $\mathfrak D$ of a structure $\mathcal R$ that does not have a finite monomorphic decomposition, this age being minimal with this property.
\end{enumerate}
\end{theorem}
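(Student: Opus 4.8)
The plan is to prove this dichotomy through a compactness/König's lemma type argument combined with the theory of well-quasi-orderings. First I would fix the ambient quasi-order: the embeddability relation $\leq$ on the collection of all finite structures of the given finite signature. The key observation is that having a finite monomorphic decomposition into at most $\ell$ blocks is a property preserved under substructure (a restriction of a structure with such a decomposition inherits one, with at most as many blocks), so for each fixed $\ell$ the class $\mathfrak{D}_\ell$ of finite structures admitting a monomorphic decomposition into at most $\ell$ blocks is itself hereditary. The central structural input I would invoke is a result (presumably established earlier or available from \cite{P-T-2013}) characterizing when a structure admits a \emph{finite} monomorphic decomposition, together with the fact that for a hereditary class $\mathfrak{C}$, either every member lies in some $\mathfrak{D}_\ell$ for a \emph{uniform} bound $\ell$, or no such uniform bound exists.

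The dichotomy then splits exactly along this line. In the bounded case, I would argue that $\mathfrak{C}\subseteq \mathfrak{D}_\ell$ for some fixed $\ell$, and that $\mathfrak{C}$ is a finite union of ages each admitting a finite monomorphic decomposition; the finiteness of the union should come from a well-quasi-ordering argument on the finitely many "block-pattern" types available when the number of blocks is bounded by $\ell$ over a finite signature. The heart of the matter is showing each such age itself has a finite monomorphic decomposition, which reduces to the corresponding structural fact once one knows the blocks cannot proliferate. In the unbounded case, I would produce the minimal bad age as follows: since $\mathfrak{C}$ contains structures needing arbitrarily many blocks, I would use a compactness argument (an inverse limit or König's lemma over increasing finite approximations) to build a single infinite structure $\mathcal{R}$ whose age lies in $\mathfrak{C}$ and which does not admit a finite monomorphic decomposition.

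The delicate part, and what I expect to be the main obstacle, is the \emph{minimality} clause in item \eqref{itemtheo2}. Producing \emph{some} structure without a finite monomorphic decomposition whose age is contained in $\mathfrak{C}$ is comparatively routine via compactness; ensuring we can choose one whose age is minimal among such bad ages requires an additional descent or a Zorn-type argument. Here I would order the bad ages inside $\mathfrak{C}$ by inclusion and argue that this poset is well-founded, or at least that every chain has a lower bound lying still inside $\mathfrak{C}$ and still without a finite monomorphic decomposition. The subtlety is that an intersection of a descending chain of ages need not be an age, and the property "does not admit a finite monomorphic decomposition" must be shown to persist in the limit; I expect this to require a careful compactness argument showing that if every age in a descending chain is bad, one can extract a single bad age contained in all of them.

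Thus the proof decomposes into: (i) establishing that $\mathfrak{D}_\ell$ is hereditary and that the bounded case yields a finite union of ages with finite monomorphic decompositions; (ii) a compactness construction producing a bad age inside $\mathfrak{C}$ in the unbounded case; and (iii) a well-foundedness or minimal-element argument upgrading "a bad age" to "a minimal bad age." Step (iii) is where I anticipate the real work, since it intertwines the combinatorics of monomorphic decompositions with the order-theoretic structure of the lattice of ages.
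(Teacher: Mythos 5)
There is a genuine gap, and it sits exactly at the step you call ``comparatively routine'': producing, in the unbounded case, a bad age inside $\mathfrak C$ by compactness. A hereditary class is \emph{not} up-directed, and a K\H{o}nig-type construction (levels are finite since the signature is finite) only yields an increasing chain $S_1\leq S_2\leq\cdots$ in $\mathfrak C$ whose union realizes \emph{some} age contained in $\mathfrak C$; nothing forces that age to contain members needing unboundedly many blocks. The point is that ``needs more than $\ell$ blocks'' is preserved \emph{upward} under embeddability (its complement $\mathfrak D_\ell$ is hereditary, as you correctly note), so the witnesses of unboundedness may form an infinite antichain with no common extensions inside $\mathfrak C$, and a chain threaded through structures that merely \emph{embed into} bad structures can itself be entirely tame; badness does not pass down to the approximations. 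This is precisely why the paper's proof splits on whether $\mathfrak C$ is w.q.o.\ rather than on whether the block numbers are bounded. If $\mathfrak C$ is w.q.o., it is a finite union of ideals (Erd\H{o}s--Tarski \cite{erdos-tarski}), each ideal is an age (Fra\"{\i}ss\'e), and Lemma \ref{compactness} converts the failure of item (\ref{itemtheo1}) into a bad age. If $\mathfrak C$ is not w.q.o., the paper invokes Lemma \ref{lem:infinitely bounds} (a minimal-bad-sequence result from \cite{pouzet1981}) to extract inside $\mathfrak C$ an age which is w.q.o.\ with \emph{infinitely many bounds}, and then Corollary \ref{lem:finitely bounds} --- the age of a structure with a finite monomorphic decomposition is very beautiful, hence has only finitely many bounds, via almost multichainability, Dickson/Higman and Theorem \ref{beautiful} --- to conclude that this age is bad. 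Your sketch contains no substitute for either of these two ingredients, and they are where the real content of the theorem lies.

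Two further mismatches. First, the minimality clause, which you single out as the main obstacle, is in fact the cheap step: in both cases the bad age lies inside a w.q.o.\ class (either $\mathfrak C$ itself, or the extracted age $\mathfrak D$), and the initial segments of a w.q.o.\ ordered by inclusion are well-founded (given $I_0\supsetneq I_1\supsetneq\cdots$, pick $x_n\in I_n\setminus I_{n+1}$; w.q.o.\ yields $m<n$ with $x_m\leq x_n\in I_{m+1}$, whence $x_m\in I_{m+1}$, a contradiction). A minimal bad age therefore exists at once --- no Zorn-type descent through intersections of descending chains of ages is needed, so the subtlety you anticipate there dissolves; note also that by Lemma \ref{compactness} badness is a property of the age itself, which is what makes ``minimal with this property'' well-posed. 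Second, your bounded case asserts essentially Corollary \ref{cor:first item}, which the paper \emph{deduces from} Theorem \ref{thm:dichotomy-binary structure}, not the other way around; proving it directly, as you propose, would require showing that a uniform bound $\ell$ forces $\mathfrak C$ to be w.q.o.\ (your ``finitely many block-pattern types''), which rests on Frasnay-type structure theory not established in this paper. As written, your plan is therefore either circular or dependent on an unproved w.q.o.\ claim, whereas the paper's argument needs only the contrapositive $\neg(\ref{itemtheo1})\Rightarrow(\ref{itemtheo2})$ and sidesteps the bounded case entirely.
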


From this follows easily a characterization of classes satisfying the first item of  Theorem \ref{thm:dichotomy-binary structure}.
\begin{corollary}\label{cor:first item}
A class $\mathfrak C$ is a finite union of ages of relational structures that have a finite monomorphic decomposition  if and only if there is an integer $\ell$ such that every member of $\mathfrak C$ has a monomorphic decomposition into at most $\ell$ blocks.
\end{corollary}

Clearly, the profile $\varphi_{\mathcal R}$  of a structure $\mathcal R$ admitting a monomorphic decomposition is bounded above by a polynomial (in fact, $\varphi_{\mathcal R}(n)\leq \binom{n+\ell-1}{\ell-1}$,  where $\ell$ is the number of blocks of the monomorphic decomposition). Hence, if $\mathfrak C$ satisfies  (\ref{itemtheo1}) of Theorem \ref{thm:dichotomy-binary structure},  the profile of $\mathfrak C$ is bounded by a polynomial. It was shown in \cite{P-T-2013} that $\varphi_{\mathcal R }$, hence $\varphi_{\mathfrak C}$,  is a quasi-polynomial (that is a polynomial $a_{\ell-1}(n)n^{\ell-1}$ $+\cdots$ $+a_{1}(n)n$ $+a_{0}(n)$ whose coefficients $a_{\ell-1}(n)$, $\cdots$, $a_{0}(n)$ are periodic functions).
In the case of ordered structures, this profile is in fact polynomial (see \cite{oudrar-pouzet}).

The profile of a class verifying (\ref{itemtheo2}) of Theorem \ref{thm:dichotomy-binary structure} is bounded below by the profile of the age $\mathfrak D$. For arbitrary relational structures, this profile can be bounded above by a polynomial. But, in the case of ordered structures, it is necessarily at least exponential \cite{oudrar-pouzet} and, as we will see in Proposition \ref{prop:profil-ordonne}, in the case of ordered binary structures, the profile of $\mathfrak D$ is bounded below by the Fibonacci function.

Consequently:

\begin{theorem}
If $\mathfrak C$ is a hereditary class of ordered binary structures then
\begin{enumerate}
\item Either the profile is bounded above by a polynomial, and in this case there is an integer $\ell$ such that every member of $\mathfrak C$ has a monomorphic decomposition in at most $\ell$ blocks.
    \item Or the profile is bounded below by the Fibonacci function.
\end{enumerate}
\end{theorem}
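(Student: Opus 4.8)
The plan is to derive the stated dichotomy directly from the structural dichotomy of Theorem~\ref{thm:dichotomy-binary structure}, feeding its two alternatives into the two profile estimates already recorded above. First I would note that ordered binary structures form a class of relational structures over a fixed finite signature (finitely many binary relation symbols together with the order relation), so that Theorem~\ref{thm:dichotomy-binary structure} applies to every hereditary class $\mathfrak C$ of such structures. This splits the argument into exactly the two cases of that theorem, which will correspond to the two cases to be proved.

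In the first case, $\mathfrak C$ is a finite union of ages $\age(\mathcal R_1),\dots,\age(\mathcal R_m)$, where each $\mathcal R_j$ admits a finite monomorphic decomposition, say into $\ell_j$ blocks. By Corollary~\ref{cor:first item}, this is equivalent to the existence of a single integer $\ell$ bounding the number of blocks needed for every member of $\mathfrak C$, which is precisely the second assertion of item~(1). For the profile bound I would invoke the estimate recalled after Theorem~\ref{thm:dichotomy-binary structure}, namely $\varphi_{\mathcal R_j}(n)\le\binom{n+\ell_j-1}{\ell_j-1}$. Since the profile of a finite union of ages is at most the sum of the individual profiles, I obtain
\[
\varphi_{\mathfrak C}(n)\;\le\;\sum_{j=1}^{m}\binom{n+\ell_j-1}{\ell_j-1}\;\le\;m\binom{n+\ell-1}{\ell-1},
\]
a polynomial in $n$ of degree $\ell-1$, which establishes item~(1).

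In the second case, $\mathfrak C$ contains the age $\mathfrak D=\age(\mathcal R)$ of a structure $\mathcal R$ having no finite monomorphic decomposition. Because $\mathfrak D\subseteq\mathfrak C$ and both are sets of isomorphism types of finite structures, counting on each cardinality gives $\varphi_{\mathfrak C}(n)\ge\varphi_{\mathfrak D}(n)$ for all $n$. It then remains only to bound $\varphi_{\mathfrak D}$ from below, and here I would apply Proposition~\ref{prop:profil-ordonne}, which asserts precisely that for ordered binary structures the profile of such an age $\mathfrak D$ is at least the Fibonacci function. Combining the two inequalities yields item~(2).

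The genuinely hard work sits outside this final assembly: it is concentrated in Proposition~\ref{prop:profil-ordonne} and in the finite-basis theorem behind it (the explicit $1242$-element basis for ordered reflexive directed graphs), which is what converts the qualitative statement ``no finite monomorphic decomposition'' into an honest Fibonacci lower bound. Granting those prior results, the only point in the present proof requiring a little care is the passage from ``finite union of ages'' to a single polynomial bound: one must check that there are only finitely many block bounds $\ell_j$, so that their maximum $\ell$ exists, and that summing finitely many binomial-coefficient polynomials keeps the degree controlled by $\ell-1$. Both are immediate once the uniform $\ell$ of Corollary~\ref{cor:first item} is in hand.
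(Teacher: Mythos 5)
Your proposal is correct and follows essentially the same route as the paper, which presents this theorem precisely as the assembly of Theorem~\ref{thm:dichotomy-binary structure} and Corollary~\ref{cor:first item} (with the binomial bound $\binom{n+\ell-1}{\ell-1}$ summed over the finitely many ages) in case~(1), and of the Fibonacci lower bound on $\varphi_{\mathfrak D}$ in case~(2). The only point you gloss --- that Proposition~\ref{prop:profil-ordonne} is literally stated for members of $\check{\mathfrak A}_1$ and reaches general type-$k$ structures only via Proposition~\ref{prop:decomp-finie} and the finite basis of Theorem~\ref{thm:graph-ordonne} --- is glossed in exactly the same way by the paper itself, and you correctly identify that this is where the real work lies.
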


\medskip

Ordered  structures that have a finite monomorphic decomposition have a particularly simple form. If $\mathcal R: =(V, \leq, \rho_1, \dots, \rho_k)$ is such a structure,  the chain  $(V, \leq)$ decomposes into finitely many intervals $V_i$ such that the union of any local isomorphisms $f_i$ of $(V_i, \leq_{\restriction V_i})$ is a local isomorphism of $\mathcal R$ (see Theorem \ref{thm:charcfinitemonomorph} below). If $\mathcal R$ is made of binary relations, these relations are quite close to the given order. For example, if $\mathcal R$ is a \emph{bichain}, that is  $\mathcal R:= (V, \leq, \leq')$,  where $\leq'$ is a linear order, then $\leq'$ coincides with $\leq$ or its opposite on each $V_i$. Note that any ordered binary structure  can be viewed as superposition of graphs (symmetric and irreflexive) and unary relations on the same ordered set and this superposition has the same local isomorphisms as $\mathcal R$ hence the same profile.
Indeed, if $\mathcal R:=(V,\leq,\rho_1,\dots,\rho_k)$ is an ordered binary structure, replace each $\rho_i$ by the graphs $\rho_i^+$, $\rho_i^-$ and the unary relation $u(\rho_i)$ defined by setting $\rho_i^+:=\{\{x,y\}\in [V]^2:x<y \text{ and }(x,y)\in\rho_i\}$, $\rho_i^-:=\{\{x,y\}\in [V]^2:x<y \text{ and }(y,x)\in\rho_i\}$ and $u(\rho_i):=\{x\in V:(x,x)\in\rho_i\}$. For example, if $\mathcal R$ has a finite monomorphic decomposition $(V_i)_{i:=1, \dots, \ell}$ then the graphs are full or empty on each $V_i$.

The following result indicates that for  ordered binary structures, the study of their profile reduces (roughly) to the case of  single ordered binary relations, and in fact to  unary ordered structures  or to    ordered graphs.

\begin{proposition}\label{prop:decomp-finie}
Let $k\in \mathbb N$. An ordered binary structure $\mathcal R:= (V, \leq, \rho_1,\dots,\rho_k)$ has a finite monomorphic decomposition if and only if every structure $\mathcal R_i:=(V, \leq, \rho_i)$,  $(1\leq i\leq k)$,  has such a decomposition.
\end{proposition}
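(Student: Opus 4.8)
The plan is to treat the two implications separately, the forward one being immediate and the converse hinging on a single idea. For the forward direction, suppose $\mathcal R$ has a finite monomorphic decomposition $(V_j)_{j\in J}$. Each $\mathcal R_i$ is a reduct of $\mathcal R$, obtained by forgetting all relations but $\leq$ and $\rho_i$; hence whenever two finite subsets $A,A'$ satisfy $|A\cap V_j|=|A'\cap V_j|$ for all $j$, the isomorphism $\mathcal R_{\restriction A}\to\mathcal R_{\restriction A'}$ supplied by the decomposition is a fortiori an isomorphism $(\mathcal R_i)_{\restriction A}\to(\mathcal R_i)_{\restriction A'}$. Thus the very same partition $(V_j)_{j\in J}$ is a finite monomorphic decomposition of each $\mathcal R_i$.

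For the converse, I would assume each $\mathcal R_i$ admits a finite monomorphic decomposition $(V^{(i)}_j)_{j\in J_i}$ with $J_i$ finite, and take the common refinement $(W_m)_{m\in M}$, where $x$ and $y$ lie in the same block $W_m$ exactly when they lie in the same block of every partition $(V^{(i)}_j)_{j\in J_i}$. Since there are finitely many indices $i$ and each $J_i$ is finite, the refinement has at most $\prod_i |J_i|$ blocks, so it is finite; moreover each $V^{(i)}_j$ is a union of blocks $W_m$. It then remains to check that $(W_m)_{m\in M}$ is a monomorphic decomposition of $\mathcal R$.

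So let $A,A'$ be finite subsets with $|A\cap W_m|=|A'\cap W_m|$ for every $m$. Summing over the blocks $W_m$ contained in a fixed $V^{(i)}_j$ yields $|A\cap V^{(i)}_j|=|A'\cap V^{(i)}_j|$ for all $i$ and $j$. Fixing $i$, the monomorphic decomposition of $\mathcal R_i$ then furnishes an isomorphism $\psi_i\colon (\mathcal R_i)_{\restriction A}\to (\mathcal R_i)_{\restriction A'}$. The crux is that these $\psi_i$ are produced independently, so a priori they differ; but each $\mathcal R_i$ carries the order $\leq$, so every $\psi_i$ is order-preserving, and between the two finite chains $(A,\leq)$ and $(A',\leq)$ — which have equal cardinality — there is exactly one order-isomorphism, call it $\phi$. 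Hence $\psi_i=\phi$ for every $i$, and this single map $\phi$ simultaneously preserves $\leq$ and every $\rho_i$. Therefore $\phi$ is an isomorphism $\mathcal R_{\restriction A}\to\mathcal R_{\restriction A'}$, proving that $(W_m)_{m\in M}$ is a finite monomorphic decomposition of $\mathcal R$.

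I expect the only genuine obstacle to be the one just isolated: finite monomorphic decompositions of the $\mathcal R_i$ guarantee, separately, that each single relation can be matched, whereas a monomorphic decomposition of $\mathcal R$ demands one isomorphism handling all relations at once. The ordered setting dissolves this difficulty, since the linear order rigidifies the situation — uniqueness of the order-isomorphism between equinumerous finite chains forces all the separate witnesses to coincide. (Alternatively, one could phrase the converse through the local-isomorphism criterion of Theorem \ref{thm:charcfinitemonomorph}: the $W_m$ are intervals, and a union of order-local-isomorphisms on the $W_m$, regrouped along each partition $(V^{(i)}_j)_{j\in J_i}$, is a local isomorphism of each $\mathcal R_i$ and hence of $\mathcal R$.)
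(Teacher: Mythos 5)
Your proof is correct and turns on exactly the paper's key observation: between two finite chains of the same cardinality there is a unique order-isomorphism, so the separately produced witnesses $\psi_i$ (the paper's local isomorphisms $f_i$ carrying $F\cup\{x\}$ onto $F\cup\{y\}$) are forced to coincide, giving a single map that is simultaneously a local isomorphism of every $\mathcal R_i$ and hence of $\mathcal R$. The paper packages this same argument slightly differently — it proves the marginally stronger statement that $\simeq_{\mathcal R}$ is the intersection of the relations $\simeq_{\mathcal R_i}$ and reads off the proposition from the fact that the equivalence classes are the monomorphic components — but since intersecting those equivalence relations is precisely taking the common refinement of the corresponding partitions, your route is essentially the same proof stated directly against the definition.
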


The case of an ordered unary relation is handled by a result of P. Jullien \cite{jullien}: the profile is either polynomial or bounded below by the exponential function $n\hookrightarrow 2^n$.  The case of ordered graphs was handled by Balogh, Bollob\'{a}s and Morris \cite{B-B-M06/2}.

Finite ordered structures, of the form $\mathcal R:=(V,\leq,u_1,\dots,u_k)$, consisting of a linear order $\leq$ and unary relations $u_i$, can be represented by words over a finite alphabet $A$ (namely the set $\{0,1\}^k$). Embedding between structures correspond to the subword ordering. A famous result of Higman \cite{higman52} asserts that the set $A^{\ast}$ of words over a finite alphabet $A$ is well-quasi-ordered for this ordering. Hence, hereditary classes of words are finite unions of ideals. According to \cite{jullien} (see Chapter 6, page 103), each ideal decomposes into a finite product of \emph{elementary ideals},   sets  of the form $\{\square,a\}$, where $\square$ is the empty word and $a\in A$  and of \emph{starred ages}, sets of the form $B^{\ast}$ where $B^{\ast}$ is the set of words over $B\subseteq A$ (cf. \cite {kabil-pouzet} for an extension to an ordered alphabet). The profile of $B^{\ast}$ satisfies   $\varphi_{ B^{\ast}}(n)= \vert B\vert^n$.  Thus the profile of a hereditary class of words is either polynomial or at least exponential.

In this paper, we give more information about the binary ordered structures yielding an exponential profile. The case of  ordered structures not necessarily binary is handled in a forthcoming paper \cite{oudrar-pouzet}.

We say that ordered structures of the form $\mathcal R:= (V, \leq, \rho_1, \dots, \rho_k)$ have \emph{type} $k$. As a consequence of  Ramsey's theorem,  we obtain:

\begin{theorem}\label{thm:graph-ordonne}
 The collection $\mathfrak S_k$ of ordered binary structures of type $k$ that do not have a finite monomorphic decomposition has a finite basis,  that is contains  a finite set $\mathfrak A_k$  such that every member of $\mathfrak S_k$ embeds some member of $\mathfrak A_k$. If $k=1$, the subset $\check{\mathfrak A}_1$  of  $\mathfrak A_1$ made of ordered reflexive graphs has  one thousand two hundred and forty two  members and none  embeds into an other one.
\end{theorem}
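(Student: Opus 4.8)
The plan is to convert the failure of a finite monomorphic decomposition into an explicit infinite combinatorial witness, and then to homogenize that witness by Ramsey's theorem until it falls into one of finitely many canonical shapes. Using the superposition remark preceding Proposition \ref{prop:decomp-finie}, I first replace $\mathcal R=(V,\le,\rho_1,\dots,\rho_k)$ by the equivalent family of graphs $\rho_i^{+},\rho_i^{-}$ and unary relations $u(\rho_i)$, which has the same local isomorphisms and the same monomorphic decompositions; this restricts the admissible $2$-types and $1$-types to finite sets of sizes $4^{k}$ and $2^{k}$, and lets me speak of the colour $\tau(x,y)\in T$ of an ordered pair $x<y$ and the colour $\sigma(x)$ of a point. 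By the characterization of finite monomorphic decompositions of ordered structures (Theorem \ref{thm:charcfinitemonomorph}), $\mathcal R\in\mathfrak S_k$ means that the finest monomorphic decomposition partitions $(V,\le)$ into infinitely many intervals $(B_s)_{s\in S}$. The essential feature is that consecutive blocks cannot be merged: for each successor pair $(B_s,B_{s+1})$ there is a finite configuration $F_s\subseteq V$, of size bounded by a constant depending only on $k$, witnessing that the merged interval violates order-invariance. This yields infinitely many finite witnesses of uniformly bounded size. (When $S$ has no infinite set of successor pairs, i.e.\ has a dense part, one argues symmetrically; the passage to order type $\omega$ or $\omega^{*}$ is handled by considering $\le$ and its opposite.)

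Since there are only finitely many isomorphism types of witness $F_s$, the pigeonhole principle keeps infinitely many of a single common type. To those I apply Ramsey's theorem to the colouring of the pairs lying in distinct, far-apart witnesses by their $2$-type $\tau$, together with a further pigeonhole on the $1$-types $\sigma$, extracting an infinite subfamily on which all long-range pairs carry one $2$-type and all points one $1$-type. Reading the union $W$ of the surviving witnesses as an increasing sequence, the induced structure $\mathcal R_{\restriction W}$ has a $2$-type depending only on a bounded window (the fixed witness type) together with one long-range colour; such a structure is determined by finitely many parameters and therefore belongs to an explicit finite list $\mathcal L_k$ of canonical ordered binary structures.

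It remains to see that $\mathcal L_k$ is a basis. Each member of $\mathcal L_k$ is itself in $\mathfrak S_k$: its infinitely many identical, non-mergeable consecutive configurations obstruct every partition of its chain into finitely many intervals, so it has no finite monomorphic decomposition. By construction every $\mathcal R\in\mathfrak S_k$ embeds some element of $\mathcal L_k$, so the embedding-minimal members of $\mathcal L_k$ form a finite antichain $\mathfrak A_k$ that is a basis. For $k=1$ and reflexive graphs I would finally enumerate the canonical forms explicitly — consecutive-window patterns compatible with a single long-range edge/non-edge colour and a single loop/no-loop vertex colour — and discard those embedding into another; this bookkeeping is what produces the number $1242$ and the antichain property.

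The main obstacles are three. First, proving that non-mergeability of consecutive blocks is always witnessed by configurations of bounded size, so that only finitely many witness types occur, requires a quantitative reading of Theorem \ref{thm:charcfinitemonomorph}. Second, one must verify that the Ramsey homogenization does not accidentally create a finite monomorphic decomposition: flattening the long-range colours could in principle let consecutive blocks of $\mathcal R_{\restriction W}$ merge, so the argument must show that the preserved common witness type still forces infinitely many genuine boundaries. Third, the exact value $1242$ and the antichain claim for reflexive ordered graphs rest on a finite but delicate case analysis of the members of $\mathcal L_1$, which I expect to be the most laborious part even though it is in principle routine once the canonical forms are described.
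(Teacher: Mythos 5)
Your proposal follows essentially the same strategy as the paper --- bounded-size witnesses of non-equivalence, Ramsey homogenization producing invariant structures on $\NN\times\{0,1\}$ determined by finitely many parameters, passage to embedding-minimal members, and an explicit enumeration for $k=1$ --- but the three ``main obstacles'' you list are not side issues: they are the proof, and you leave two of them genuinely open. The first obstacle is in fact already settled in the paper, though not where you look for it: by Lemma \ref{lem:2equiv} and Theorem \ref{thm:bin-ordered}, for ordered binary structures $\simeq_{\mathcal R}$ coincides with $\simeq_{\leq 2,\mathcal R}$, so failure of equivalence is always witnessed by a set of at most \emph{two} points; combined with the separation Lemma \ref{lem:separation2} this yields the precise dichotomy on which everything rests (either infinitely many pairwise non-$1$-equivalent elements with single-point witnesses, or two interleaved infinite sets of pairwise non-$2$-equivalent elements), not a ``quantitative reading'' of Theorem \ref{thm:charcfinitemonomorph}. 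Your treatment of the second horn of this dichotomy is also incomplete: when the non-$2$-equivalence of the interleaved sets is witnessed by a single external point, the canonical structure lives on $\NN\times\{0,1\}\cup\{a\}$ with a distinguished vertex $a$ (the paper's family $\mathfrak B_k^1$, accounting for $48$ of the $1242$ reflexive graphs); your canonical forms, described as window patterns plus one long-range colour on $\NN\times\{0,1\}$, omit this family entirely, so your list $\mathcal L_1$ could not reach $1242$.

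The second obstacle you name is a real gap that you flag but do not close, and it is the technical heart of the argument. After homogenization one must decide which copy of the witnessed point to retain so that the witness \emph{survives inside the extracted substructure}; flattening by Ramsey can otherwise destroy the boundaries, exactly as you fear. The paper resolves this by invariance: item (\ref{ass9}) of Claim \ref{order-graphclaim} shows that either $\mathcal R_{\restriction\{f(n),g(n,m)\}}$ and $\mathcal R_{\restriction\{f(k),g(n,m)\}}$ are isomorphic for all $n<m<k$ or for none, and accordingly one sets $F(n,0):=f(2n+1)$ (Case $(a)$) or $F(n,0):=f(2n)$ (Case $(b)$), always with $F(n,1):=g(2n,2n+1)$; this pairing trick is what yields condition (\ref{eq2}) in Claim \ref{claim:infinitelymanyclasses}, i.e.\ that $(n,0)$ and $(m,0)$ remain non-$1$-equivalent in $\mathcal R^{(1)}$ with witness $(n,1)$, so the extracted structure genuinely belongs to $\mathfrak S_k$ --- which is required for $\mathfrak A_k$ to be \emph{contained} in $\mathfrak S_k$, as the statement demands. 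Without some such mechanism your sentence that the ``preserved common witness type still forces infinitely many genuine boundaries'' is an assertion, not an argument. Finally, the value $1242$ and the pairwise non-embedding claim are not routine bookkeeping to be deferred: they are half the statement, and the paper devotes Section \ref{sec:description graphs} to the classification by parameters $(p,\ell,k)$ with the count $1122+48+72=1242$ and a separate lemma (using the rigidity of the ordered, invariant structures) for the antichain property; your proposal as written establishes neither.
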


Members  $\mathcal R$ of $\mathfrak A_k$ have the following common features. First, their domain $V$ is either $\NN\times\{0,1\}$, or $(\NN\times\{0,1\})\cup\{a\}$ for some fixed element $a$. Next, if $\mathcal R$ is such a structure and $u$ is any  one-to-one order-preserving map on $(\NN,\leq)$, then the map $(u,Id)$ on $\NN\times\{0,1\}$ defined by $(u,Id)(x,i)=(u(x),i)$ for $i\in \{0,1\}$ and that fixes $a$ if $a\in V$ preserves $\mathcal R$. They are \emph{almost multichainable} in the sense of \cite{pouzet06}.

Ordered reflexive directed graphs $\mathcal G$ belonging to $\check{\mathfrak A}_1$ depend  on three parameters $p,l,k$. There are integers such that $1\leq p\leq 10$, $1\leq \ell\leq 6$ and the value of $k$ depends upon $p,l$ (see Section \ref{sec:description graphs}).

We show that the profile of every element of $\check{\mathfrak A}_1$ is at least exponential:

\begin{proposition}\label{prop:profil-ordonne}
The profile of a member of $\check{\mathfrak A}_1$ is either given by one of the five following functions:  $\varphi_1(n):=2^n-1$, $\varphi_2(n):=2^n-n$,  $\varphi_3(n):=2^{n-1}$, $\varphi_4(n):=2^{n-1}+1$ and the Fibonacci sequence, or is bounded below by one of them.
\end{proposition}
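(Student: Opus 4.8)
The plan is to use the almost multichainability of the members of $\check{\mathfrak A}_1$ to turn the computation of the profile into the enumeration of certain words modulo a finite symmetry group, and then to run this enumeration through the classification of the $1242$ structures by the parameters $p,\ell,k$.

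First I would fix $\mathcal R\in\check{\mathfrak A}_1$ and set up an encoding of its finite induced substructures. By hypothesis the domain of $\mathcal R$ is $\NN\times\{0,1\}$ (or $(\NN\times\{0,1\})\cup\{a\}$, which I treat by adjoining the finitely many orbits that use $a$), and $\mathcal R$ is invariant under every map $(u,\mathrm{Id})$ with $u$ an order-preserving injection of $(\NN,\leq)$. Consequently the isomorphism type of the substructure induced on a finite set $S$ depends only on the finite word obtained by listing, level by level in increasing order of the first coordinate, which of $(m,0)$, $(m,1)$ or both lie in $S$; I encode these three possibilities by letters of weight $1$, $1$, $2$ over a three-letter alphabet, so that the number of elements of the substructure equals the total weight of the word. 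Two words in the same orbit of the finite group $G_{\mathcal R}$ of symmetries of the encoding coming from the automorphisms of $\mathcal R$ that commute with the $(u,\mathrm{Id})$ action --- in practice the column swap $(m,i)\mapsto(m,1-i)$ and, when present, the order reversal --- always encode isomorphic substructures, and I would establish the converse up to a controlled, polynomially small family of further coincidences. Hence $\varphi_{\mathcal R}(n)$ is the number of $G_{\mathcal R}$-orbits of admissible weight-$n$ words, corrected by that family.

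Next I would read off, for each structure, the admissible letters and transitions together with the group $G_{\mathcal R}$, organising the bookkeeping by the parameters $p$ and $\ell$, which fix the adjacencies within and between the two columns. In all these structures the weight-$2$ letter is either inadmissible or yields a substructure isomorphic to one using two consecutive weight-$1$ letters, so that the effective enumeration is over binary words of length $n$ recording one column per occupied level. Three regimes then appear. If $\mathcal R$ is asymmetric and every transition is admissible, distinct words give distinct types up to a polynomially small set of degenerate configurations (typically the two monochromatic selections, whose coincidence costs $1$, or a linear family costing $n$), and a direct count yields $\varphi_1(n)=2^n-1$ or $\varphi_2(n)=2^n-n$. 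If the column swap belongs to $G_{\mathcal R}$, an application of Burnside's lemma halves the count --- the swap has no fixed binary words --- and, after correcting for the orbits meeting the diagonal or the element $a$, one obtains $\varphi_3(n)=2^{n-1}$ or $\varphi_4(n)=2^{n-1}+1$. Finally, if the graph forbids a letter from being immediately repeated, the admissible words satisfy a recursion $f(n)=f(n-1)+f(n-2)$, and the profile is the Fibonacci sequence.

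The crux, and the clause ``or is bounded below by one of them'', is the lower bound for the non-extremal structures. Here I would exhibit inside each $\mathcal R$ an injective (up to $G_{\mathcal R}$) copy of one of the five extremal word-languages: selecting one element per level in a prescribed column pattern already produces $2^n$ order-patterns, which survive as $2^{n-1}$ distinct types under a column-swap symmetry and collapse to a Fibonacci-sized family exactly when a forbidden transition is present. Thus the $1242$-fold verification reduces to checking, for each admissible triple $(p,\ell,k)$, which of the five languages embeds and which correction terms apply; this step is finite and well suited to computer assistance, and it is where the main difficulty lies, since the corrections must be pinned down exactly for the four structures realising $\varphi_1,\dots,\varphi_4$ and only bounded crudely for all the others. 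The conceptual content, by contrast, is entirely carried by the word-encoding of the first paragraph together with the determination of $G_{\mathcal R}$.
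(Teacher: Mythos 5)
Your overall strategy---use the invariance of the structures under the maps $(u,\mathrm{Id})$ to encode each finite induced substructure by a word recording, in the order given by $\leq$, which column each chosen vertex occupies, then count words modulo the identifications forced by the structure---is exactly the engine of the paper's proof. Lemmas \ref{lemme:profilexpo}, \ref{lemm-profil3}, \ref{lemm-profil2} and \ref{lemm-profil4} carry out precisely the corrections you describe: the two monochromatic words coincide ($2^r-1$), the $r+1$ words $1\dots10\dots0$ coincide ($2^r-r$), only the words beginning with $a$ matter, or complementation of binary words (which fixes no word) halves the count ($2^{r-1}$, resp.\ $2^{r-1}+1$); and Lemma \ref{lemm-profil6} obtains the lower bounds for $p\geq 5$ and $\ell\geq 3$ by comparison with the $p\leq 4$, $\ell\leq 2$ cases, much as your last paragraph proposes.

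There is, however, a concrete error in your regime analysis, and it hits exactly the $24$ structures that realise the Fibonacci profile (Lemma \ref{lemm-profil1}: $p=1$, $1\leq k\leq 3$ and $2\leq p\leq 4$, $10\leq k\leq 12$). You claim that the weight-$2$ letter (both elements of a level) is always inadmissible or yields a substructure isomorphic to one using two consecutive weight-$1$ letters, so that the enumeration reduces to binary words recording one column per occupied level. That reduction is valid for the classes counted in Lemmas \ref{lemme:profilexpo} and \ref{lemm-profil3} (e.g.\ in $G^{(1)}_{\ell,4}$ a same-level pair and a cross-level pair $(n,0),(m,1)$ with $n<m$ induce isomorphic restrictions), but it fails for the Fibonacci structures $\underline{2}.\Delta_{\NN}$, $K_2.\Delta_{\NN}$, $\Delta_2.\omega$, $\underline{2}^{*}.\omega$, $K_2.\omega$, etc.: there the same-level pair carries the unique exceptional $2$-element type, while all cross-level pairs are uniform and the two weight-$1$ letters are interchangeable. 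Consequently the profile counts compositions of $n$ into parts $1$ and $2$---the paper's recursion distinguishes whether the selection ends with a same-level pair---and not binary words avoiding an immediately repeated letter, as you assert. Worse, under your reduction these structures would have a single isomorphism type in each cardinality, i.e.\ a bounded profile, contradicting the very statement being proved. The repair is to retain the weighted three-letter alphabet for these classes, identify the two weight-$1$ letters, and keep the weight-$2$ letter as genuinely distinct, which restores the recursion $\varphi(r)=\varphi(r-1)+\varphi(r-2)$ of Lemma \ref{lemm-profil1}; with that correction your plan matches the paper's proof.
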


This paper is organized as follows. Section \ref{sec:basic notions} contains the definitions and basic  notions. Section \ref{sec:monomorphic decomposition}
 contains  a presentation of the notion of monomorphic decomposition. We give the proof  of  Theorem \ref{thm:dichotomy-binary structure} and Corollary \ref{cor:first item} in Subsection \ref{subsection:proofthmdichotomy} and the proof of Proposition \ref{prop:decomp-finie} in Subsection \ref{subsectionproofpropdecfinie}.
In section \ref{sec:proof of theorem graphesordonnes} we give the detailed proof of the first part of Theorem \ref{thm:graph-ordonne} with a description of members of $\mathfrak A_k$. The proof of the second part is given in Section \ref{sec:profile}, with a study of profiles of members of $\check {\mathfrak A}_1$ from which  Proposition \ref{prop:profil-ordonne} follows.
%
%
\smallskip

The results presented in this paper  are included in Chapter 8 of our doctoral thesis \cite{oudrarthese}. They are mentionned in an  abstract  posted on ArXiv
\cite{oudrar-pouzet2014}.
Our results have been presented at the $9$th International Colloquium on Graph Theory and combinatorics (ICGT 2014) held in Grenoble (France), June 30-July 4, 2014, and at the conference-school on Discrete Mathematics and Computer Science (DIMACOS'2015) held in Sidi Bel Abb\`es (Algeria), November 15-19, 2015. We are pleased to thank the organizers of these conferences for their help.

\section{Basic concepts}\label{sec:basic notions}
Our terminology follows Fra\"{\i}ss\'{e} \cite{fraisse}. For properties of profiles we refer to the survey of Pouzet \cite{pouzet06}.
\subsection{Relational structures, embedabbility, hereditary classes and ages}\label{subsec:structures}
Let $m$ be a non-negative integer. A \emph{$m$-ary relation} with \emph{domain} $V$ is a subset $\rho$ of $V^m$. When needed, we identify $\rho$ with its characteristic function$ \chi_{\rho}$ sending every element of $\rho$ on $1$, hence,
$\rho$ becomes a map from $V^{m}$ to  $\{0,1\}$. A \emph{relational structure} is a pair $\mathcal{R}:=(V,(\rho _{i})_{i\in I})$ made of a
set $V$, the \emph{base} or \emph{domain} of $\mathcal R$, also denoted by
 $V(\mathcal R)$ and a family $(\rho _{i})_{i\in I}$ of $n_{i}$-ary relations $\rho _{i}$ on $V$. The family $\mu:=(n_{i})_{i\in I}$ is the \emph{signature} of $\mathcal{R}$. If $I$ is finite, we say that the signature $\mu$ is finite. When all the relations $\rho _{i},~ i\in I$,  are binary, we have a \emph{binary relational structure}, (\emph{binary structure} for short). If one specified relation is a linear order, we have an \emph{ordered relational structure}. A structure which has the form $\mathcal R:=(V,\leq,\rho_1,\dots,\rho_k)$, where $k$ is a non negative integer, $\leq$ is a linear order on the set $V$ and each $\rho_i$ is a binary relation on $V$, is an \emph{ordered binary structure of type} $k$. Basic examples of ordered binary structures are \emph{chains} ($k=0$), \emph{bichains} ($k=1$ and $\rho_1$ is a linear order on $V$) and \emph{ordered directed graphs} ($k=1$); if in this last case $\rho_1$ is an irreflexive and symmetric binary relation on $V$, we just say that the structure is an \emph{ordered graph}.
Let $\mathcal R$ be a relational structure. The \emph{substructure induced} by $\mathcal{R}$ on a subset $A$ of $V$, simply called the
\emph{restriction } of $\mathcal{R}$ to $A$, is the relational structure $\mathcal{R}\restriction_{A}:=(A,(\rho _{i}\restriction _{A})_{i\in I})$.
The notion  of \emph{isomorphism} between relational structures
is defined in the natural way. A
\emph{local isomorphism} of $\mathcal R$ is any isomorphism between two restrictions
of $\mathcal R$.  A relational structure $\mathcal R$ is \emph{embeddable} into a relational structure $\mathcal R'$,  in notation $\mathcal R\leq \mathcal R'$,  if $\mathcal R$ is isomorphic to some restriction of $\mathcal R'$. Embeddability   is a quasi-order on the class of structures having a given signature. We denote by $\Omega_\mu$ the class  of finite relational structures of signature $\mu$;  it is quasi-ordered by embeddability. Most of the time, we consider its members up to isomorphism. The \emph{age} of a relational  structure $\mathcal R$ is the
set $\age (\mathcal R) $ of restrictions of $\mathcal R$ to finite subsets of its domain, these restrictions being considered up to isomorphism.  A class $\mathfrak{C}$ of structures is \textit{hereditary} if it contains every relational structure which can be embedded in some member of $\mathfrak{C}$ (i.e., if $\mathcal R\in\mathfrak C$ and $\mathcal S\leq\mathcal R$ then $\mathcal S\in\mathfrak C$). In order theoretic terms, a  class of finite structures is hereditary iff this  is an initial segment of $\Omega_\mu$. If $\mathfrak B$ is any subset of $\Omega_{\mu}$ the set $Forb (\mathfrak B):= \{\mathcal R \in \Omega_{\mu}: \mathcal B\not \leq \mathcal R \; \text{for all}\;  \mathcal B\in \mathfrak B\} $  is a  hereditary class. A \emph{bound} of a hereditary class $\mathfrak C$ of finite structures is any  element of $\Omega_\mu\setminus \mathfrak C$ which is minimal w.r.t. embeddability. Each hereditary class $\mathfrak C$ of $\Omega_{\mu}$ is determined by its bounds (in fact $\mathfrak C= Forb (\mathfrak B)$ where $\mathfrak B$ is the set of bounds of $\mathfrak C$). Any  age is an \emph{ideal} of $\Omega_\mu$, that is a non empty initial segment of $\Omega_\mu$ which is up-directed. One of the most basic result of the theory of relations asserts that the converse (almost) holds.
\begin{lemma}\label{lem:age}(Fra\"{\i}ss\'e, 1954)
If $\mu$ is finite, every ideal  of $\Omega_\mu$ is the  age of a countable structure.
\end{lemma}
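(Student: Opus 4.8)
The plan is to prove the classical direction of Fra\"{\i}ss\'e's theorem: a non-empty, up-directed initial segment $\mathfrak D$ of $\Omega_\mu$ is realized as the age of a countable structure, obtained as a direct limit of a suitable increasing chain drawn from $\mathfrak D$. The finiteness of the signature $\mu$ enters at the very start: for each $n$ there are only finitely many structures of signature $\mu$ on an $n$-element set up to isomorphism, so $\Omega_\mu$, and hence $\mathfrak D\subseteq\Omega_\mu$, is at most countable. I would therefore fix an enumeration $\mathfrak D=\{A_0,A_1,A_2,\dots\}$ of representatives of the isomorphism types occurring in $\mathfrak D$. (If $\mathfrak D$ is finite, up-directedness yields a maximum element $M$, and $\age(M)=\mathfrak D$ follows at once from heredity and from the fact that every member of $\mathfrak D$ embeds into $M$; so I may assume the enumeration is infinite.)

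Next I would construct, by induction, an increasing chain $B_0\leq B_1\leq B_2\leq\cdots$ of members of $\mathfrak D$ with $A_n\leq B_n$ for every $n$. Set $B_0:=A_0$. Given $B_n\in\mathfrak D$, apply the up-directedness of $\mathfrak D$ to the pair $\{B_n,A_{n+1}\}$ to obtain $B_{n+1}\in\mathfrak D$ with $B_n\leq B_{n+1}$ and $A_{n+1}\leq B_{n+1}$. This produces the chain together with a family of embeddings $g_n:B_n\hookrightarrow B_{n+1}$.

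The step I expect to require the most care is passing from this chain of embeddings to an actual countable structure $\mathcal R$ whose finite restrictions are exactly the members of $\mathfrak D$. Here I would form the direct limit of the system $(B_n,g_n)$. Concretely, using the $g_n$ I replace each $B_{n+1}$ by an isomorphic copy in which $B_n$ is literally an induced substructure, that is $V(B_n)\subseteq V(B_{n+1})$ and each relation of $B_n$ is the restriction of the corresponding relation of $B_{n+1}$; the copies must be chosen coherently so that these inclusions compose. Then I put $V:=\bigcup_n V(B_n)$ and, for each $i\in I$, let $\rho_i:=\bigcup_n \rho_i^{B_n}$ on $V$. This is well defined precisely because the $g_n$ are embeddings, so the relations agree on overlaps; the resulting $\mathcal R:=(V,(\rho_i)_{i\in I})$ is countable, being a countable union of finite sets.

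Finally I would verify $\age(\mathcal R)=\mathfrak D$ by two inclusions. For $\age(\mathcal R)\subseteq\mathfrak D$: any finite subset $F\subseteq V$ is contained in some $V(B_m)$, since the sets $V(B_n)$ increase and $F$ is finite, so $\mathcal R_{\restriction F}$ is a restriction of $B_m\in\mathfrak D$; as $\mathfrak D$ is an initial segment of $\Omega_\mu$, it follows that $\mathcal R_{\restriction F}\in\mathfrak D$. For $\mathfrak D\subseteq\age(\mathcal R)$: each $A_n$ embeds into $B_n$, which is an induced substructure of $\mathcal R$, whence $A_n\in\age(\mathcal R)$, and every member of $\mathfrak D$ is isomorphic to some $A_n$. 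Thus $\mathcal R$ is the required countable structure, and the main subtlety of the whole argument is confined to the coherent realization of the embeddings $g_n$ as inclusions so that the direct limit is a genuine relational structure.
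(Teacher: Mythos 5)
Your proof is correct and is exactly the classical chain argument behind the result: the paper states this lemma with a citation to Fra\"{\i}ss\'e (1954) and gives no proof of its own, and the standard proof it invokes is precisely your construction (countability of $\mathfrak D$ from the finite signature, an increasing chain $B_0\leq B_1\leq\cdots$ cofinal in $\mathfrak D$ obtained by up-directedness, and the union of a coherently realized chain of induced substructures). Your handling of the two inclusions for $\age(\mathcal R)=\mathfrak D$, and of the degenerate finite case via a maximum element, is sound.
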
\label{lem:fraisse}

\begin{note}
In the sequel, all relational structures we consider will have a finite signature.
\end{note}

\subsection{Invariant structures}

We borrow the following notion and results to \cite{charretton-pouzet}; see
 \cite{Bou-Pouz} for an illustration.

 Let  $C:=(L,\leq )$ be  a chain.
For each integer $n$, let $[C]^{n}$ be the set of $n$-tuples $\overrightarrow{a}:=(a_{1},...,a_{n})\in L^{n}$ such that $a_{1}<...<a_{n}$. This set will be
identified with the set $[L]^n$ of the $n$-element subsets of $L$.

For every local automorphism $h$ of $C$ with domain $D$,
set $h(\overrightarrow{a}):=(h(a_{1}),...,h(a_{n}))$ for every $\overrightarrow{a}\in \lbrack D]^{n}$.
Let $\mathfrak{L}:=\left\langle C,{\mathcal R},\Phi \right\rangle $ be
a triple made of a chain $C$ on $L$,  a relational structure
${\mathcal R}:=(V,(\rho _{i})_{i\in I})$ and a set $\Phi $ of maps, each one being a map
$\psi $ from $[C]^{a(\psi )}$ into $V$, where $a(\psi )$ is an integer, the \emph{arity} of $\psi$.

We say that
$\mathfrak{L}$ is \emph{invariant} if:
\begin{equation}\label{eq:invariance}
\rho _{i}(\psi _{1}(\overrightarrow{\alpha }_{1}),...,\psi _{m_{i}}(%
\overrightarrow{\alpha }_{m_{i}}))=\rho _{i}(\psi _{1}(h(\overrightarrow{%
\alpha }_{1})),...,\psi _{m_{i}}(h(\overrightarrow{\alpha }_{m_{i}})))
\end{equation}
for every $i\in I$ and every local automorphism $h$ of $C$ whose domain contains
$\overrightarrow{\alpha }_{1},...,\overrightarrow{\alpha }_{m_{i}},$
where $m_{i}$ is the arity of $\rho _{i}$, $\psi _{1},...,\psi _{m_{i}}\in\Phi$, $\overrightarrow{\alpha }_{j}\in \lbrack C]^{a(\psi _{j})}$ for $j=1,...,m_{i}.$

Condition (\ref{eq:invariance}) expresses the fact that each $\rho _{i}$ is invariant under the transformation of the $m_{i}$-tuples of $V$, that are induced on $V$,  by the local automorphisms of $C$. For example, if ${\rho}$ is a binary relation and $\Phi =\{\psi\} $ then
\begin{equation*}
\rho(\psi (\overrightarrow{\alpha }),\psi (\overrightarrow{\beta }))=%
\rho(\psi (h(\overrightarrow{\alpha })),\psi (h(\overrightarrow{\beta})))
\end{equation*}%
means that  $\rho (\psi (\overrightarrow{\alpha }),\psi (\overrightarrow{\beta }))$ depends only upon the relative positions of $\overrightarrow{\alpha }$ and $\overrightarrow{\beta }$ on the chain $C$.

If $\mathfrak{L}:=\left\langle C,{\mathcal R},\Phi \right\rangle $ and $L^{\prime }$ is a subset of $L$, set $\Phi _{\upharpoonleft _{L^{\prime
}}}:=\{\psi _{\upharpoonleft _{\lbrack L^{\prime }]^{a(\psi )}}}:\psi \in\Phi \}$ and $\mathfrak{L}_{\upharpoonleft _{L^{\prime }}}:=\left\langle
C_{\upharpoonleft _{L^{\prime }}},{\mathcal R},\Phi _{\upharpoonleft_{L^{\prime }}}\right\rangle $ the restriction of $\mathfrak{L}$ to $L^{\prime }.$

The following result (see \cite{charretton-pouzet}) is a consequence of Ramsey's theorem:

\begin{theorem}\label{thm:ramsey-invariant}
Let $\mathfrak{L}:=\left\langle C,{\mathcal R},\Phi \right\rangle $ be a triple such that the domain $L$ of $C$ is infinite, ${\mathcal R}$
consists of finitely many relations and $\Phi $ is finite. Then there is an infinite subset $L^{\prime }$ of $L$ such that $\mathfrak{L}_{\upharpoonleft
_{L^{\prime }}}$ is invariant.
\end{theorem}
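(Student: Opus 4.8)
The plan is to use Ramsey's theorem to pass to an infinite subchain on which the truth value of each relation, evaluated on images of the maps in $\Phi$, depends only on the relative order of the arguments in $C$ --- which is precisely what invariance demands. First I would record the finiteness data: since $I$ is finite, set $m:=\max_{i\in I} m_i$ for the largest arity of a relation $\rho_i$, and since $\Phi$ is finite, set $a:=\max_{\psi\in\Phi} a(\psi)$. Then any tuple $(\psi_1(\overrightarrow{\alpha}_1),\dots,\psi_{m_i}(\overrightarrow{\alpha}_{m_i}))$ occurring in~(\ref{eq:invariance}) is built from the entries of the $\overrightarrow{\alpha}_j$, whose union $A\subseteq L$ has cardinality at most $N:=a\cdot m$. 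Thus every instance of the invariance condition is governed by a finite configuration of at most $N$ points of $C$, and it suffices to arrange that the truth value depends only on the order type of such a configuration.

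For the coloring I would treat each cardinality $p$ with $1\le p\le N$ separately. Call a \emph{selection pattern}, for a given $i$ and a given choice $\psi_1,\dots,\psi_{m_i}\in\Phi$, a tuple $(I_1,\dots,I_{m_i})$ in which each $I_j$ is a strictly increasing sequence of $a(\psi_j)$ indices from $\{1,\dots,p\}$ and $\bigcup_j I_j=\{1,\dots,p\}$; given $S=\{s_1<\dots<s_p\}\in[L]^p$ it produces the tuples $\overrightarrow{\alpha}_j$ read off from $S$ at the positions in $I_j$. I then color $S\in[L]^p$ by the function assigning to each triple $\bigl(i,(\psi_1,\dots,\psi_{m_i}),(I_1,\dots,I_{m_i})\bigr)$ the value $\rho_i(\psi_1(\overrightarrow{\alpha}_1),\dots,\psi_{m_i}(\overrightarrow{\alpha}_{m_i}))\in\{0,1\}$. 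Because $I$, $\Phi$, the arities, and $p$ are all finite, there are finitely many such triples and hence finitely many colors. Applying Ramsey's theorem to $[L]^N$, then to the resulting infinite set for $p=N-1$, and so on down to $p=1$, I obtain a single infinite $L'\subseteq L$ for which $[L']^p$ is monochromatic for every $p\le N$.

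Finally I would verify that $\mathfrak{L}_{\upharpoonleft_{L'}}$ is invariant. Fix $i$, maps $\psi_1,\dots,\psi_{m_i}\in\Phi$, tuples $\overrightarrow{\alpha}_1,\dots,\overrightarrow{\alpha}_{m_i}$ with entries in $L'$, and a local automorphism $h$ of $C_{\upharpoonleft_{L'}}$ whose domain contains them. Let $A\subseteq L'$ be the set of entries of the $\overrightarrow{\alpha}_j$ and $B:=h(A)\subseteq L'$; both have the same cardinality $p\le N$, and since $h$ preserves order it carries the $k$-th smallest element of $A$ to the $k$-th smallest of $B$. Consequently the configuration $(\overrightarrow{\alpha}_1,\dots,\overrightarrow{\alpha}_{m_i})$ and its image under $h$ arise from the \emph{same} selection pattern applied to $A$ and to $B$ respectively. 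The left-hand side of~(\ref{eq:invariance}) is then the corresponding component of the color of $A$, while the right-hand side is the same component of the color of $B$; since $A,B\in[L']^p$ and $[L']^p$ is monochromatic, the two values coincide, which is exactly~(\ref{eq:invariance}).

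The step I expect to be most delicate is the bookkeeping in the coloring: one must color sets of \emph{exactly} $p$ points and restrict to selection patterns that use all $p$ coordinates, so that each configuration is recorded against the set of points it actually occupies. Coloring all configurations inside a single fixed $N$-element set instead would force one to pad $A$ and $B=h(A)$ to $N$-element subsets of $L'$ with matching position patterns, which can be impossible when the gaps of $A$ and of $B$ in $L'$ contain different numbers of points. Working cardinality by cardinality avoids this obstruction, and the finiteness of $I$, of $\Phi$, and of the arities is exactly what keeps the number of colors finite so that Ramsey's theorem applies.
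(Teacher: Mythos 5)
Your proof is correct, and it follows exactly the route the paper indicates: the paper gives no proof of Theorem \ref{thm:ramsey-invariant}, presenting it as a consequence of Ramsey's theorem taken from \cite{charretton-pouzet}, and your argument is a sound self-contained instantiation of that strategy. Your cardinality-by-cardinality coloring (rather than coloring only $N$-element sets) correctly avoids the padding obstruction you identify, which is a genuine issue for an arbitrary infinite chain, and the verification via matching selection patterns under the order-isomorphism $h$ is exactly what invariance requires.
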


\subsection{Almost multichainable relational structure}\label{subsectionproduct}
A relational structure  $\mathcal R$ of signature $\mu$ is \emph{almost multichainable} if its domain $V$ decomposes into $F \cup (L\times K)$ where $F$ and $K$ are two finite sets  and there is a linear order $\leq$ on $L$ such that:

\begin{equation} \label{eq:local}
\begin{split} \text{For every local isomorphism} \; f\;  \text{of}\; (L, \leq),
\; \text{the map}\;  (f, Id)\; \text{on}\;  L\times K\;\\
  \text{extended}\; \text{by the identity on}\;
F \;  \text{induces a local isomorphism of} \;  \mathcal R.
\end{split}
\end{equation}

  If $F$ is empty and
 $\vert K\vert=1$, Condition (\ref{eq:local}) reduces to say that there is a linear order $\leq$ on $V$ such that   every local isomorphism of  $(V, \leq) $ is a local isomorphism of $\mathcal R$. Relational  structures with this property are said {\it chainable}, a notion invented by Fra\"{\i}ss\'e \cite {fraisse} (the relationship with monomorphy is given in Section \ref{subsection:basic}). \\
 Almost multichainable structures were introduced in \cite{pouzet.tr.1978}, see \cite{pouzet06}. They fall under the frame  of invariant structures. Indeed, let $\mathcal R$ be a relational structure; suppose that its domain $V$  decomposes into $V=  F \cup (L\times K)$ where $F$ and $K$ are two finite sets  and that $\leq$ is a linear order on $L$.  Denote by $\psi_k$  the map from $L$ into $V$ defined by $\psi_k (a):= (a, k)$ if $k\in K$ and $\psi_k(a):=k$ if $k\in F$.  Set $\Phi:= \{\psi_k: k\in K\cup F\}$ and $C:= (L, \leq)$. Then the structure  $\left\langle C,{\mathcal R},\Phi \right\rangle$ is said \emph{invariant} iff  Condition \ref{eq:local} holds.  Hence,  Theorem \ref{thm:ramsey-invariant} allows to extract from any $\mathcal R$ an almost multichainable structure. \subsection{Well-quasi-ordering}

We recall that a poset $P$ is \textit{well-quasi-ordered (w.q.o.)} if $P$ contains no infinite antichain and no infinite descending chain.
We recall the following result ~\cite[3.9 p.~329]{pouzet1981}.
This is  a special instance of a property of posets which is
similar to Nash-William's lemma on minimal bad
sequences~\cite{nash-williams1963}).
\begin{lemma}\label{lem:infinitely bounds}
If a hereditary class $\mathfrak C$ is not w.q.o., it contains an age $\mathfrak D$ which is w.q.o. and has infinitely many bounds.
\end{lemma}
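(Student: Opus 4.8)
The plan is to realize $\mathfrak D$ as a ``limit'' of an infinite antichain extracted from $\mathfrak C$, using the Nash--Williams minimal bad sequence idea only to force well-quasi-orderability. First I would note that embeddability on finite structures of signature $\mu$ is well-founded: an embedding between finite structures of equal cardinality is onto, hence an isomorphism, so a strict descent must strictly decrease the number of elements. Consequently $\mathfrak C$ being not w.q.o. produces a bad sequence, and I may build a \emph{minimal} bad sequence $(B_n)_{n\in\NN}$ in $\mathfrak C$: having chosen $B_0,\dots,B_{n-1}$, let $B_n$ be embeddability-minimal such that $B_0,\dots,B_n$ extends to a bad sequence. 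The usual exchange argument then shows that the strict down-set $S:=\{\mathcal R\in\Omega_\mu:\mathcal R<B_n\text{ for some }n\}$ is w.q.o.: a bad sequence $(y_k)$ in $S$ with $y_k<B_{m_k}$, indexed so that $m_{k_0}$ is least, would make $B_0,\dots,B_{m_{k_0}-1},y_{k_0},y_{k_0+1},\dots$ a bad sequence whose $m_{k_0}$-th term lies strictly below $B_{m_{k_0}}$, contradicting minimality (cf.\ \cite{nash-williams1963}). Finally, colouring each pair $\{i<j\}$ by whether $B_i,B_j$ are incomparable or satisfy $B_j<B_i$ and applying Ramsey (well-foundedness rules out an infinite descending homogeneous set), I pass to an infinite antichain; relabelling, I assume $(B_n)_n$ is an antichain and $S$ is w.q.o.

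Next I would construct the age. Enumerating the countably many isomorphism types of $\Omega_\mu$ and diagonalising, I thin $(B_n)_n$ to a subsequence along which, for every finite $\mathcal R$, the truth value of $\mathcal R\le B_n$ is eventually constant; keeping the notation $(B_n)_n$, I set $\mathfrak D:=\{\mathcal R\in\Omega_\mu:\mathcal R\le B_n\text{ for infinitely many }n\}$. This $\mathfrak D$ is nonempty, clearly down-closed, and contained in $\mathfrak C$ by heredity. The delicate point is directedness: given $\mathcal R,\mathcal R'\in\mathfrak D$, both embed into $B_n$ for all large $n$, so the substructure $\mathcal S_n$ of $B_n$ induced on the union of the two images is a common extension lying in $\age(B_n)$, of cardinality at most $|\mathcal R|+|\mathcal R'|$. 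Since $\mu$ is finite there are only finitely many types of that bounded size, so one type $\mathcal S$ recurs for infinitely many $n$; then $\mathcal S\in\mathfrak D$ dominates both $\mathcal R$ and $\mathcal R'$. Thus $\mathfrak D$ is an ideal, hence an age by Lemma~\ref{lem:age}.

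It remains to verify the two required properties. Because $(B_n)_n$ is an antichain, no $B_n$ embeds into infinitely many others, so $B_n\notin\mathfrak D$ and every $\mathcal R\in\mathfrak D$ satisfies $\mathcal R<B_n$ for some $n$; hence $\mathfrak D\subseteq S$, and $\mathfrak D$ is w.q.o. as a subset of a w.q.o.\ set. For the bounds, each $B_n$ lies in the up-closed set $\Omega_\mu\setminus\mathfrak D$, which by well-foundedness contains a minimal element $C_n\le B_n$; each $C_n$ is then a bound of $\mathfrak D$. If $\mathfrak D$ had only finitely many bounds, the $C_n$ would take finitely many values, so some bound $C$ would satisfy $C\le B_n$ for infinitely many $n$, forcing $C\in\mathfrak D$ and contradicting $C\notin\mathfrak D$. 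Therefore $\mathfrak D$ is a w.q.o.\ age inside $\mathfrak C$ with infinitely many bounds.

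I expect the main obstacle to be the directedness of $\mathfrak D$, that is, checking that the limit set really is an age rather than a mere down-set: this is exactly where finiteness of the signature enters, through the pigeonhole on common extensions of bounded size. By contrast, w.q.o.-ness is essentially handed to us by the minimal bad sequence construction via the inclusion $\mathfrak D\subseteq S$, and the infinitude of bounds reduces to the short pigeonhole argument playing the antichain $(B_n)_n$ against the defining property of $\mathfrak D$.
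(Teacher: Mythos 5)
Your proof is correct: the minimal bad sequence construction (with the exchange argument showing the strict down-set $S$ is w.q.o.), the Ramsey extraction of an antichain, the diagonalized limit age $\mathfrak D:=\{\mathcal R:\mathcal R\leq B_n \text{ for infinitely many } n\}$, and the two pigeonhole arguments — finiteness of the signature for directedness of $\mathfrak D$, and playing the bounds $C_n\leq B_n$ against the definition of $\mathfrak D$ for the infinitude of bounds — all check out, including the needed small verifications ($B_n\notin\mathfrak D$ since $(B_n)$ is an antichain, hence $\mathfrak D\subseteq S$, and each $C_n$ minimal below $B_n$ in the complement is minimal in the whole complement since $\mathfrak D$ is down-closed). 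The paper itself gives no proof of this lemma — it cites Pouzet (1981, 3.9, p.~329) and remarks that it is a variant of Nash--Williams's minimal bad sequence lemma — and your argument is a faithful, self-contained reconstruction of exactly the route the paper indicates.
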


We recall the  following
notion and result (see
Chapter~13 p.~354, of \cite{fraisse}).
A class $\mathfrak C$ of finite structures is \emph{very beautiful} if
for every integer $k$, the collection $\mathfrak C (k)$ of structures
$(S, U_1, \dots, U_k)$, where $S\in \mathfrak C$ and $U_1, \dots, U_k$
are unary relations with the same domain as $S$, has no infinite
antichain w.r.t. embeddability. A crucial property is the following:
\begin{theorem}\label{beautiful}\cite{pouzet72}. A very beautiful age has only finitely many bounds.
\end{theorem}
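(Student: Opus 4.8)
The plan is to prove the contrapositive: if the age $\mathfrak{D}$ has infinitely many bounds, then $\mathfrak{D}$ is not very beautiful, i.e. some $\mathfrak{D}(k)$ contains an infinite antichain. First I would record that the bounds themselves form an infinite antichain of $\Omega_\mu$: if $\mathcal{B}$ and $\mathcal{B}'$ are distinct bounds with $\mathcal{B}\leq\mathcal{B}'$, then $\mathcal{B}$ is isomorphic to a \emph{proper} restriction of $\mathcal{B}'$, hence lies in $\mathfrak{D}$ (every proper restriction of a bound belongs to $\mathfrak{D}$ by minimality), contradicting $\mathcal{B}\notin\mathfrak{D}$. So fix pairwise incomparable bounds $(\mathcal{B}_n)_{n\in\NN}$. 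The idea is to delete one point from each $\mathcal{B}_n$ — landing inside $\mathfrak{D}$ — while remembering, \emph{by means of extra unary relations}, exactly how the deleted point was attached; the resulting coloured structures will be shown to form an infinite antichain in some $\mathfrak{D}(k)$.

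Concretely, fix for each $n$ a point $x_n$ of $\mathcal{B}_n$, put $W_n:=V(\mathcal{B}_n)\setminus\{x_n\}$ and $\mathcal{C}_n:=\mathcal{B}_n\restriction_{W_n}$. Since $\mathcal{B}_n$ is a bound, $\mathcal{C}_n\in\mathfrak{D}$. In the binary case $\mathcal{B}_n=(V_n,\rho_1,\dots,\rho_k)$ the way $x_n$ attaches to the rest is recorded point by point: for $1\leq i\leq k$ set $U_i^{+}:=\{y\in W_n:(x_n,y)\in\rho_i\}$ and $U_i^{-}:=\{y\in W_n:(y,x_n)\in\rho_i\}$, two unary relations on $W_n$. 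This yields $\mathcal{C}_n^{+}:=(\mathcal{C}_n,U_1^{+},U_1^{-},\dots,U_k^{+},U_k^{-})\in\mathfrak{D}(2k)$, with $2k$ independent of $n$. The loops contribute only the finite datum $(\chi_{\rho_i}(x_n,x_n))_{i\leq k}\in\{0,1\}^{k}$, so by the pigeonhole principle I may pass to an infinite subfamily on which this datum is constant.

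It then remains to check that $(\mathcal{C}_n^{+})_n$ is an antichain, contradicting very beautifulness applied to $k':=2k$. Suppose $\mathcal{C}_m^{+}\leq\mathcal{C}_n^{+}$ via an embedding $g$. By construction $g$ embeds $\mathcal{C}_m$ into $\mathcal{C}_n$ and preserves (and reflects) the colours $U_i^{\pm}$; extending it by $g(x_m):=x_n$ gives an injection $\bar{g}:V(\mathcal{B}_m)\to V(\mathcal{B}_n)$. Preservation and reflection of the relations on $\mathcal{C}_m$, of the colours — which encode precisely the relations between $x$ and each $y$ in both directions — and equality of the loop data (constant on the subfamily) together show that $\bar{g}$ is an embedding of $\mathcal{B}_m$ into $\mathcal{B}_n$. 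Hence $\mathcal{B}_m\leq\mathcal{B}_n$, forcing $m=n$ since the bounds are pairwise incomparable. Thus $(\mathcal{C}_n^{+})_n$ is an infinite antichain in $\mathfrak{D}(2k)$, the desired contradiction.

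The step I expect to be the real obstacle is passing from binary signatures to an arbitrary finite signature $\mu$. There the \emph{link} of the deleted point $x_n$ is no longer captured by unary relations: for a relation of arity $a$ and a choice of the positions occupied by $x_n$, one is left with a relation of arity up to $a-1$ on $\mathcal{C}_n$, so the colouring is of strictly smaller maximal arity only in its \emph{new} part, while $\mathcal{C}_n$ still carries the original arity-$a$ relations. The natural remedy is an induction on the maximal arity of $\mu$, treating the family of one-point links as objects of a very beautiful class of smaller arity and applying the inductive hypothesis to extract the antichain there. Making this reduction precise — in particular verifying that the class of links inherits very beautifulness and that link-preserving embeddings still lift to embeddings of the $\mathcal{B}_n$ — is the technical heart of the argument; the binary computation above is exactly the base case, where the links are unary and the conclusion is immediate.
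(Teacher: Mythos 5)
Your binary-case argument is correct, and it is essentially the ``simple proof'' that the paper alludes to when it remarks that in the case of binary structures Theorem \ref{beautiful} has a simple proof (the paper itself gives no proof of the theorem; it cites Pouzet 1972). All the steps check out: distinct bounds of a hereditary class of \emph{finite} structures are pairwise incomparable, since mutual embeddability of finite structures forces isomorphism and a bound strictly embedded in another would lie in $\mathfrak D$ by minimality; deleting a point $x_n$ from $\mathcal B_n$ lands in $\mathfrak D$; the link of $x_n$ is faithfully recorded by the $2k$ unary predicates $U_i^{\pm}$ together with the loop vector, which you rightly stabilize by pigeonhole; and an embedding $g$ of $\mathcal C_m^{+}$ into $\mathcal C_n^{+}$ preserves and reflects the colours, so its extension by $x_m\mapsto x_n$ (injective because $x_n\notin W_n$) is an embedding of $\mathcal B_m$ into $\mathcal B_n$, contradicting incomparability. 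This yields an infinite antichain in $\mathfrak D(2k)$ and finishes the binary case cleanly.

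The genuine gap is exactly where you flag it: the statement concerns an arbitrary finite signature, and there your induction on the maximal arity does not work as sketched. The notion of ``very beautiful'' provides freedom from infinite antichains only for expansions by \emph{unary} relations, while for an $m$-ary relation the link of the deleted point is a family of relations of arity up to $m-1$ on the remainder. The expanded structures $\mathcal C_n^{+}$ thus live in a signature whose maximal arity is unchanged (the relations of $\mathcal C_n$ themselves are still present) but which now carries genuinely non-unary link relations; no inductive hypothesis about smaller-arity ages applies to them, and nothing in the definition of very beautiful gives w.q.o.\ --- let alone very-beautifulness --- of this enriched class. Overcoming precisely this obstruction is the substance of Pouzet's 1972 theorem, and it matters for the paper, since Corollary \ref{lem:finitely bounds} and through it Theorem \ref{thm:dichotomy-binary structure} invoke the result for arbitrary finite signatures, not just binary ones. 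So your proposal is a complete proof of the special case the paper discusses informally, but not of the stated theorem.
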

In the case of binary structures, Theorem \ref {beautiful} has a simple proof.

A straightforward consequence of
Higman's theorem on words (see
\cite{higman52}) is that \emph{the age of an almost multichainable structure is very
  beautiful}. As a consequence:
\begin{lemma}[{\cite[Theorem~4.20]{pouzet06}}]
  \label{lemma.almost_multichainable_age}
  The age of an almost multichainable structure  has only finitely many bounds. \end{lemma}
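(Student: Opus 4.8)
The plan is to deduce the statement from Theorem \ref{beautiful} by showing that the age of an almost multichainable structure $\mathcal R$ is very beautiful; since Theorem \ref{beautiful} asserts that a very beautiful age has only finitely many bounds, this suffices. Write $\mathfrak C := \age(\mathcal R)$. Fixing an integer $k$, I must prove that $\mathfrak C(k)$, the class of structures $(S, U_1, \dots, U_k)$ with $S \in \mathfrak C$ and $U_1, \dots, U_k$ unary relations on the domain of $S$, has no infinite antichain under embeddability. Since embeddability never strictly decreases cardinality, $\mathfrak C(k)$ has no infinite descending chain, so it is in fact enough to show that $\mathfrak C(k)$ is well-quasi-ordered.

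Next I would set up a coding of the members of $\mathfrak C(k)$ by words over a finite alphabet. Using the decomposition $V = F \cup (L \times K)$ of the almost multichainable structure, with $F, K$ finite and $(L, \leq)$ a chain, any finite $A \subseteq V$ splits into $A_F := A \cap F$ and a finite subset of $L \times K$. Listing the distinct $L$-coordinates $a_1 < \cdots < a_m$ occurring in $A$, I record for each $a_j$ the nonempty set of $\kappa \in K$ with $(a_j, \kappa) \in A$, together with the colour in $\{0,1\}^k$ assigned to each such element by $(U_1, \dots, U_k)$. This produces a word $w(A)$ over the finite alphabet of coloured partial maps from $K$ to $\{0,1\}^k$ with nonempty domain, together with a header recording $A_F$ and the colours of its elements (only finitely many possibilities). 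By invariance, that is by condition (\ref{eq:local}), the isomorphism type of the enriched substructure is determined by the pair $(\text{header}, w(A))$.

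The crux, and the step I expect to be the main obstacle, is the coding lemma: if two members $X, Y$ of $\mathfrak C(k)$ have equal headers and $w(X) \leq w(Y)$ in Higman's subword order over this finite alphabet, then $X$ embeds into $Y$. Here a subword embedding selects, increasingly, $L$-coordinates of $Y$ realizing those of $X$ with equal letters; extending this increasing injection of $L$-coordinates to a local isomorphism $f$ of $(L, \leq)$ and applying condition (\ref{eq:local}), the map $(f, \mathrm{Id})$ extended by the identity on $F$ is a local isomorphism of $\mathcal R$ which moreover respects the unary relations, and restricting it to the domain of $X$ yields the desired embedding. Care must be taken that the letters encode enough of the colouring for the unary relations to be preserved, which is exactly why the alphabet is taken to be coloured partial maps rather than plain subsets of $K$.

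Granting the coding lemma, the conclusion is immediate. Given an infinite sequence in $\mathfrak C(k)$, infinitely many members share a common header by the pigeonhole principle; Higman's theorem \cite{higman52} on words over the finite alphabet then yields two of them, say the $i$-th occurring before the $j$-th, with $w(X_i) \leq w(X_j)$, whence $X_i$ embeds into $X_j$ by the coding lemma. Thus $\mathfrak C(k)$ has no infinite antichain. As $k$ was arbitrary, $\mathfrak C$ is very beautiful, and Theorem \ref{beautiful} finishes the proof.
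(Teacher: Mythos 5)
Your proof is correct and takes essentially the same route as the paper, which obtains the lemma by remarking that the age of an almost multichainable structure is very beautiful as a straightforward consequence of Higman's theorem on words, and then invoking Theorem \ref{beautiful}. Your coding of finite coloured subsets of $F\cup(L\times K)$ by a header together with a word over the finite alphabet of coloured partial maps from $K$ to $\{0,1\}^k$, combined with the invariance condition (\ref{eq:local}) to turn a subword embedding into an embedding of structures, simply supplies the details the paper leaves implicit.
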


  The  finiteness of the bounds is a deep result. Frasnay \cite{frasnay65} proved by means of  a clever finite  combinatorial analysis that  chainable relational structures have finitely many bounds. The argument using their very beautiful character  is shorter (but less precise: it gives no estimate on the size of bounds).

 \section{Monomorphic decomposition of a relational structures}\label{sec:monomorphic decomposition}

  We present in this section the notion of monomorphic decomposition of a relational structure. This notion was introduced in \cite{P-T-2013}. The introduction of an equivalence relation makes the presentation simpler and the proofs easier. This equivalence relation appeared in  \cite{oudrar-pouzet2014} and for hypergraphs in \cite{sikaddour-pouzet}. Its study is developped in full in  \cite{oudrarthese} (cf. the third part of the thesis).

\subsection{Monomorphic decomposition: basic properties}\label{subsection:basic}

Let $\mathcal R:= (V, (\rho_i)_{i\in I})$ be a relational structure.
A subset $B$ of $V$  is a \emph{monomorphic part} of $\mathcal R$ if for every non-negative integer $k$ and every pair $A,~A'$ of $k$-element subsets of $V(\mathcal R)$, the induced structures on $A$ and $A'$ are isomorphic whenever $A\setminus B=A'\setminus B$.	
A \emph{monomorphic decomposition} of $\mathcal R$ is a partition $\mathcal  P$ of $V$ into monomorphic parts. Equivalently, it is a partition $\mathcal {P}$ of $V(\mathcal R)$ into blocks such that
for every integer $n$, the induced structures on two $n$-element subsets $A$
and $A^{\prime }$ of $V$ are isomorphic whenever $\vert A\cap
B\vert =\vert A^{\prime}\cap B\vert $ for every $B\in\mathcal {P}$. Each monomorphic part is included into a maximal one. This monomorphic part is unique and called a \emph{monomorphic component}.  The monomorphic components of $\mathcal R$ form a  monomorphic decomposition of $\mathcal R$ of which  every monomorphic decomposition of $\mathcal R$ is a refinement (Proposition 2.12, page 14 of  \cite{P-T-2013}).  
\smallskip

Let $x$ and $y$ be two elements of $V$. Let $F$ be a finite subset of
 $V\setminus \{x,y\}$. We say that $x$ and $y$ are $F$\textit{-equivalent} and we set $x\simeq _{F,\mathcal{R}}y$ if the restrictions of $\mathcal{R}$
to $\{x\}\cup F$ and $\{y\}\cup F$ are isomorphic. Let $k$ be a non-negative
integer, we set $x\simeq _{k,\mathcal{R}}y$ if $x\simeq _{F,\mathcal{R}}y$ for
every $k$-element subset $F$ of $V\setminus \{x,y\}$. We set $x\simeq_{\leq k, \mathcal R}y$ if $x\simeq _{k^{\prime },\mathcal{R}}y$ for every
$k^{\prime }\leq k$. Finally, we say that they are \textit{equivalent} and we set $x\simeq _{\mathcal{R}}y$ if $x\simeq _{k,\mathcal{R}}y$ for every integer $k$. 

The fundamental property, whose proof is easy  (see \cite{sikaddour-pouzet}, \cite{oudrar-pouzet2014}, \cite{oudrarthese}) is this:
\begin{lemma}
The relations $\simeq _{k,\mathcal{R}},~\simeq _{\leq k,\mathcal{R}}$ and
$\simeq _{\mathcal{R}}$ are equivalence relations. Furthermore, the equivalence classes of $\simeq _{\mathcal{R}}$ are the  monomorphic components  of $\mathcal{R}$.  \end{lemma}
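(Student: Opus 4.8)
The plan is to prove the three statements in turn, establishing first that each $\simeq_{k,\mathcal R}$ is an equivalence relation, deducing the same for $\simeq_{\leq k,\mathcal R}$ and $\simeq_{\mathcal R}$, and finally identifying the classes of $\simeq_{\mathcal R}$ with the monomorphic components. Reflexivity and symmetry of $\simeq_{F,\mathcal R}$ (and hence of $\simeq_{k,\mathcal R}$) are immediate from the corresponding properties of ``being isomorphic'': for reflexivity the identity on $\{x\}\cup F$ works, and for symmetry one inverts the witnessing isomorphism. So the substance of the first claim is transitivity.

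For transitivity the key step is the following. Suppose $x\simeq_{k,\mathcal R}y$ and $y\simeq_{k,\mathcal R}z$, and fix a $k$-element set $F\subseteq V\setminus\{x,z\}$; I must produce an isomorphism between $\mathcal R_{\restriction\{x\}\cup F}$ and $\mathcal R_{\restriction\{z\}\cup F}$. The difficulty is that $y$ may lie in $F$, so I cannot directly chain the two hypotheses against the same $F$. The standard device is to choose, if $y\in F$, a substitute: pick an auxiliary element and transport across a suitable $k$-element set avoiding the problematic points, or more cleanly, argue that one may assume $y\notin F\cup\{x,z\}$ by first replacing $F$ with a set in which the role of $y$ is neutralized. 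Concretely, if $y\notin F$ then $F$ is a $k$-set avoiding $y$, so $x\simeq_{F}y$ and $y\simeq_{F}z$ give isomorphisms $\mathcal R_{\restriction\{x\}\cup F}\cong\mathcal R_{\restriction\{y\}\cup F}\cong\mathcal R_{\restriction\{z\}\cup F}$, and composing yields $x\simeq_{F}z$. If $y\in F$, I would remove $y$ from $F$ and use an element in the excess to rebuild a $k$-set not containing $y$, checking that the comparisons still transfer; this bookkeeping is the main obstacle, and it is exactly where the hypothesis ``for \emph{every} $k$-element set'' (rather than one fixed set) is used. I expect this case analysis to go through because $\simeq_{k,\mathcal R}$ quantifies over all $k$-sets, giving enough freedom to route around $y$.

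Transitivity of $\simeq_{\leq k,\mathcal R}$ is then automatic: it is the intersection $\bigcap_{k'\leq k}\simeq_{k',\mathcal R}$ of equivalence relations, and an intersection of equivalence relations is an equivalence relation; likewise $\simeq_{\mathcal R}=\bigcap_{k\in\NN}\simeq_{k,\mathcal R}$ is an equivalence relation. This disposes of the first sentence of the lemma with essentially no extra work once the single-$k$ case is settled.

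For the final statement, that the classes of $\simeq_{\mathcal R}$ are precisely the monomorphic components, I would argue both inclusions. First, I show each $\simeq_{\mathcal R}$-class $B$ is a monomorphic part: given $k$-element sets $A,A'$ with $A\setminus B=A'\setminus B$, I want $\mathcal R_{\restriction A}\cong\mathcal R_{\restriction A'}$. When $A$ and $A'$ differ in a single element inside $B$, say $A=\{x\}\cup F$ and $A'=\{y\}\cup F$ with $x\simeq_{\mathcal R}y$, this is immediate from the definition of $\simeq_{F,\mathcal R}$; the general case follows by iterating this one-element swap along a path inside $B$, invoking transitivity at each step. Hence every $\simeq_{\mathcal R}$-class is a monomorphic part, so it lies inside a monomorphic component. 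Conversely, since the monomorphic components form a monomorphic decomposition (the cited Proposition~2.12 of \cite{P-T-2013}), any two elements $x,y$ in the same component are $F$-equivalent for every finite $F$ avoiding them, so $x\simeq_{\mathcal R}y$; thus each component is contained in a $\simeq_{\mathcal R}$-class. The two inclusions force the partitions to coincide, which is the assertion. The only genuinely delicate point in this paragraph is the reduction of an arbitrary equal-outside-$B$ pair to a sequence of single-element swaps, and this is handled by the transitivity established above together with the freedom to choose the intermediate sets.
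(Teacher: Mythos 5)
Your overall architecture is the right one, and indeed the paper itself prints no argument for this lemma (it is declared easy and deferred to \cite{sikaddour-pouzet}, \cite{oudrar-pouzet2014}, \cite{oudrarthese}), so your proposal must stand on its own. It almost does, but there is a genuine gap exactly where you flagged it: transitivity of $\simeq_{k,\mathcal R}$ in the case $y\in F$. Writing ``I expect this case analysis to go through'' is not a proof, and the repair you hint at --- removing $y$ and rebuilding a $k$-set from some auxiliary element --- does not work as stated: an isomorphism between $\mathcal R_{\restriction \{x\}\cup F''}$ and $\mathcal R_{\restriction \{z\}\cup F''}$ for some \emph{other} $k$-set $F''$ avoiding $y$ says nothing about the original set $F$, which is the one the definition of $x\simeq_{F,\mathcal R}z$ requires. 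The correct substitutes for $y$ are $x$ and $z$ themselves. Assume $x,y,z$ pairwise distinct (otherwise the claim is trivial) and write $F=\{y\}\cup F'$ with $\vert F'\vert=k-1$ and $F'\subseteq V\setminus\{x,y,z\}$. The set $G:=\{x\}\cup F'$ is a $k$-element subset of $V\setminus\{y,z\}$, so $y\simeq_{k,\mathcal R}z$ gives $\mathcal R_{\restriction \{x,y\}\cup F'}\cong \mathcal R_{\restriction \{x,z\}\cup F'}$; the set $H:=\{z\}\cup F'$ is a $k$-element subset of $V\setminus\{x,y\}$, so $x\simeq_{k,\mathcal R}y$ gives $\mathcal R_{\restriction \{x,z\}\cup F'}\cong \mathcal R_{\restriction \{y,z\}\cup F'}$. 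Composing, $\mathcal R_{\restriction \{x\}\cup F}\cong \mathcal R_{\restriction \{z\}\cup F}$, i.e. $x\simeq_{F,\mathcal R}z$. This two-step routing through $\{x,z\}\cup F'$ is the one missing idea; it does use, as you anticipated, the full strength of quantifying over \emph{all} $k$-sets.

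The remainder of your proposal is sound. Reflexivity and symmetry are immediate; $\simeq_{\leq k,\mathcal R}$ and $\simeq_{\mathcal R}$ are intersections of equivalence relations, hence equivalence relations. Your identification of the classes with the components is correct: the one-element-swap iteration shows each $\simeq_{\mathcal R}$-class is a monomorphic part (one small point to make explicit: order the swaps so that the incoming element is not already in the current set, which is possible since $A\setminus B=A'\setminus B$ and $\vert A\vert=\vert A'\vert$ force $\vert A\cap B\vert=\vert A'\cap B\vert$), hence each class lies in a component; conversely a component, being itself a monomorphic part, makes any two of its points $F$-equivalent for every admissible finite $F$ by comparing $\{x\}\cup F$ with $\{y\}\cup F$ --- for this direction you need only that a component is a monomorphic part, not the full decomposition statement of Proposition~2.12 of \cite{P-T-2013}. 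Since classes and components both partition $V$, the mutual inclusions force the two partitions to coincide. Modulo the transitivity fix above, your proof is complete and is the standard argument.
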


Using this  equivalence relation, the proof of the following result, which improves Proposition 2.4 of \cite {P-T-2013},  is straigthforward.

\begin{lemma} \label{compactness}
A relational structure $\mathcal R$ admits a finite monomorphic decomposition if and only if there is some integer $\ell$ such that every member of its age $\age(\mathcal R)$ admits a monomorphic decomposition into at most $\ell$ classes. \end{lemma}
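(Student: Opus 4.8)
The plan is to prove Lemma \ref{compactness} by establishing the two directions separately, noting that one is immediate and the other requires a compactness argument.

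\medskip

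First I would dispose of the forward direction, which is essentially trivial. If $\mathcal R$ admits a finite monomorphic decomposition $\mathcal P = (V_1,\dots,V_\ell)$ into $\ell$ blocks, then for any finite subset $A \subseteq V$, the partition $(V_1\cap A,\dots,V_\ell\cap A)$ is a monomorphic decomposition of $\mathcal R_{\restriction A}$ into at most $\ell$ classes. Indeed, the defining isomorphism condition for a monomorphic decomposition of $\mathcal R$ restricts verbatim to subsets of $A$, since it is a statement about finite subsets and the cardinalities $\lvert A' \cap V_i\rvert$. Since every member of $\age(\mathcal R)$ is isomorphic to some such $\mathcal R_{\restriction A}$, each admits a monomorphic decomposition into at most $\ell$ classes, with the same $\ell$.

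\medskip

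The reverse direction is where the content lies, and the natural tool is the equivalence relation $\simeq_{\mathcal R}$ whose classes are, by the preceding Lemma, exactly the monomorphic components of $\mathcal R$. The strategy is to show that the hypothesis --- every finite restriction has a monomorphic decomposition into at most $\ell$ classes --- forces $\simeq_{\mathcal R}$ to have at most $\ell$ classes, which is precisely a finite monomorphic decomposition of $\mathcal R$. Suppose for contradiction that $\simeq_{\mathcal R}$ has at least $\ell+1$ classes. Then one can choose representatives $x_0,\dots,x_\ell$ lying in $\ell+1$ distinct classes. For each pair $i\neq j$ we have $x_i\not\simeq_{\mathcal R} x_j$, so by definition of $\simeq_{\mathcal R}$ there is a finite \emph{witness} set $F_{i,j}\subseteq V\setminus\{x_i,x_j\}$ with $x_i\not\simeq_{F_{i,j},\mathcal R}x_j$. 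The key observation is that witnessing non-equivalence is a \emph{finitary} phenomenon: collecting all the $x_i$ together with the finitely many witnesses, let $A := \{x_0,\dots,x_\ell\}\cup\bigcup_{i\neq j}F_{i,j}$, a finite subset of $V$.

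\medskip

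The main obstacle --- and the crux of the argument --- is to verify that within the finite structure $\mathcal R_{\restriction A}$ the elements $x_0,\dots,x_\ell$ remain pairwise inequivalent for $\simeq_{\mathcal R_{\restriction A}}$, so that $\mathcal R_{\restriction A}$ cannot have a monomorphic decomposition into at most $\ell$ classes (two of the $x_i$ would be forced into the same block, yet their witness $F_{i,j}\subseteq A$ shows they cannot be $F_{i,j}$-equivalent inside $A$). The point is that equivalence $\simeq_{F,\mathcal R}$ is defined purely in terms of the isomorphism type of $\mathcal R$ restricted to $\{x_i\}\cup F$ and $\{x_j\}\cup F$; since these restrictions depend only on $\mathcal R_{\restriction A}$ once $\{x_i,x_j\}\cup F\subseteq A$, non-equivalence is preserved under passing to $\mathcal R_{\restriction A}$. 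Thus the $\ell+1$ elements $x_0,\dots,x_\ell$ lie in $\ell+1$ distinct $\simeq_{\mathcal R_{\restriction A}}$-classes, so every monomorphic decomposition of $\mathcal R_{\restriction A}$ (being a refinement of the monomorphic-component partition, by the cited Proposition 2.12 of \cite{P-T-2013}) has at least $\ell+1$ classes. Since $\mathcal R_{\restriction A}\in\age(\mathcal R)$, this contradicts the hypothesis, completing the proof.
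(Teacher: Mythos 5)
Your proof is correct and follows exactly the route the paper intends: the paper gives no explicit proof, saying only that the result is ``straightforward'' using the equivalence relation $\simeq_{\mathcal R}$, and your argument---restricting a decomposition for the easy direction, and for the converse collecting finite witness sets $F_{i,j}$ of pairwise $\simeq_{\mathcal R}$-inequivalence into one finite restriction, where the monomorphic components (the classes of $\simeq_{\mathcal R_{\restriction A}}$) force any decomposition to exceed $\ell$ blocks---is precisely that intended argument. No gaps: the finitary character of $F$-equivalence and the refinement property from Proposition 2.12 of \cite{P-T-2013} are invoked exactly where needed.
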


 Let  $\mathcal R$ be a relational structure; if for some non-negative integer $n$ all the restrictions to the $n$-element subsets of its domain are isomorphic,  we say that $\mathcal R$ is $n$-\emph{monomorphic}. We say that  $\mathcal R$ is $(\leq n)$-\emph{monomorphic} if it is $m$-monomorphic for every $m\leq n$ and  that $R$ is \emph{monomorphic} if it is $n$-monomorphic for every integer $n$. Since two finite chains with the same cardinality are isomorphic,  chains are monomorphic. More generally, if there is a linear order $\leq$ on the domain $V$ of a relational structure  $\mathcal R$ such that the local isomorphisms of $C:= (V, \leq)$ are local isomorphisms of $\mathcal R$, then $\mathcal R$ is monomorphic. Structures with this propery are called \emph{chainable}. Every infinite monomorphic relational structure is chainable (Fra\"{\i}ss\'e, 1954, \cite{fraisse}). Every  monomorphic relational structure of finite cardinality large enough (depending on the signature) is chainable (Frasnay 1965 \cite {frasnay65}). It is an easy exercice to show that every binary relational structure on at least $4$ elements which is $(\leq 3)$-monomorphic is chainable. The extension to larger arities is a deep result of
 Frasnay 1965 (\cite{frasnay65}).
He shown the existence of an integer $p$ such that every $p$-monomorphic relational structure $R$ whose maximum of the signature is at most $m$ and domain infinite or sufficiently large is chainable (note that any $p$-monomorphic relational structure on $2p-1$-element is $(\leq p)$-monomorphic  \cite{pouzetMZ}).  The least integer $p$ in the sentence above is the  \emph{monomorphy treshold}, $p(m)$.  Its  value vas given by Frasnay \cite{frasnay90} in 1990: $p(1)=1$, $p(2)= 3$, $p(m)=2m-2$ for $m\geq 3$. For a detailed exposition of this result,  see \cite {fraisse} Chapter 13, notably  p. 378.

Relational structures with a finite monomorphis decomposition have a form close to the chainable ones, as shown by the following result proved in \cite{P-T-2013}, cf. Theorem 1.8 p. 10.

 \begin{theorem}
 \label{thm:charcfinitemonomorph}
 A relational structure $\mathcal R:=(V, (\rho_{i})_{i\in I})$ admits a finite
 monomorphic decomposition if and only if there exists a linear order $\leq$ on $V$ and a finite partition $(V_x)_{x\in X}$ of $V$ into intervals of $(V, \leq)$ such that every local isomorphism of $(V, \leq)$ which preserves each interval is a local isomorphism of $\mathcal R$.
 \end{theorem}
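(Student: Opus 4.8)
The plan is to prove the two implications separately; the converse implication (existence of the interval partition $\Rightarrow$ finite monomorphic decomposition) is routine, while the direct implication carries all the weight. For the converse, suppose a linear order $\leq$ and a finite partition $(V_x)_{x\in X}$ into intervals of $(V,\leq)$ are given with the stated property. I would show that $(V_x)_{x\in X}$ is itself a finite monomorphic decomposition. Let $A,A'$ be finite subsets of $V$ with $\vert A\cap V_x\vert=\vert A'\cap V_x\vert$ for every $x\in X$. For each $x$ let $f_x$ be the unique $\leq$-increasing bijection from $A\cap V_x$ onto $A'\cap V_x$ (it exists because the two sets are finite of equal size), and set $f:=\bigcup_{x\in X} f_x$. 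Since the blocks $V_x$ are intervals of $(V,\leq)$ they are linearly ordered among themselves, and as $f$ maps each $A\cap V_x$ into $V_x$, the map $f$ is $\leq$-increasing on all of $A$; hence $f$ is a local isomorphism of $(V,\leq)$ preserving each interval. By hypothesis $f$ is then a local isomorphism of $\mathcal R$, so $\mathcal R_{\restriction A}$ and $\mathcal R_{\restriction A'}$ are isomorphic. Thus $(V_x)_{x\in X}$ is a monomorphic decomposition, finite since $X$ is finite.

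For the direct implication, assume $\mathcal R$ has a finite monomorphic decomposition and pass to its monomorphic components $V_1,\dots,V_\ell$, which are finitely many since every monomorphic decomposition refines the component one. Each $V_i$ is monomorphic (being a monomorphic part, any two equal-sized subsets $A,A'\subseteq V_i$ satisfy $A\setminus V_i=\emptyset=A'\setminus V_i$, so $\mathcal R_{\restriction A}\cong\mathcal R_{\restriction A'}$), hence chainable: by Fra\"{\i}ss\'e's theorem when $V_i$ is infinite and by Frasnay's theorem when it is finite and large enough, there is a linear order $\leq_i$ on $V_i$ all of whose local isomorphisms are local isomorphisms of $\mathcal R_{\restriction V_i}$; the finitely many small finite components I simply break into singletons, which keeps the number of blocks finite. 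I would then glue these orders into a single linear order $\leq$ on $V$ by fixing an arbitrary linear order of the index set and stacking the blocks, so that each $V_i$ becomes an interval of $(V,\leq)$. A local isomorphism $f$ of $(V,\leq)$ preserving each interval decomposes as $f=\bigcup_i f_i$ with each $f_i$ a $\leq_i$-increasing partial map of $V_i$; chainability makes each $f_i$ a local isomorphism of $\mathcal R_{\restriction V_i}$, so every relation internal to a single block is preserved.

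The hard part will be cross-block compatibility: one must still show that $f$ preserves every relation whose arguments lie in several distinct blocks. This does not follow from monomorphy of the blocks alone, since the abstract isomorphism furnished by the monomorphic-decomposition property (equal intersection cardinalities with each $V_i$ giving $\mathcal R_{\restriction A}\cong\mathcal R_{\restriction A'}$) need not coincide with the block-respecting map $f$: the interaction of a block with an external element may a priori ``flip'' along the chain, as one sees already on small finite examples. The plan to overcome this is to invoke the invariance machinery: applying the Ramsey-type Theorem \ref{thm:ramsey-invariant} to the finitely many relations together with the finitely many block-indexing maps, one arranges the chaining orders $\leq_i$ and fixes the orientation of each block so that the value of each $\rho_j$ on a tuple spread across several blocks depends only on the relative positions of its arguments within each block. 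Granting this uniformity, the relation value on any tuple in the domain of $f$ equals the value on its image, so $f$ is a local isomorphism of $\mathcal R$, and the constructed order together with $(V_i)_i$ witnesses the right-hand side. Establishing this uniformity of cross-block relations --- equivalently, that a finite monomorphic decomposition forces the whole structure to be block-wise invariant --- is the step I expect to require the most care.
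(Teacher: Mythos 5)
Your converse direction is correct and is essentially the standard argument (the union of the blockwise increasing bijections is order-preserving because disjoint intervals of $(V,\leq)$ are comparable). The direct implication, however, has a genuine gap, and it sits exactly where you located it. The flaw is that you only use the \emph{internal} monomorphy of each component ($A\setminus V_i=\emptyset=A'\setminus V_i$), which gives chainability of the restriction $\mathcal R_{\restriction V_i}$ — but that is strictly weaker than what the theorem needs. The definition of a monomorphic part is a statement about sets $A,A'$ with $A\setminus V_i=A'\setminus V_i$ possibly \emph{nonempty}, i.e., it carries cross-block information, and it is precisely this stronger property that must be converted into a stronger chainability statement: a linear order $\leq_i$ on $V_i$ such that every local isomorphism of $(V_i,\leq_i)$ \emph{extended by the identity on $V\setminus V_i$} is a local isomorphism of $\mathcal R$. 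This is how the actual proof (Theorem 1.8 of \cite{P-T-2013}, which the present paper cites rather than reproves in general) proceeds: one applies the Fra\"{\i}ss\'e/Frasnay chainability theorems not to $\mathcal R_{\restriction V_i}$ alone but to $\mathcal R$ augmented by parameters (constants/unary predicates) for the elements outside $V_i$, together with a compactness argument to get a single order working for all finite parameter sets simultaneously. Once each block has this strengthened property, cross-block preservation is automatic: a local isomorphism $f$ of $(V,\leq)$ preserving each interval factors as a composition of maps each moving only one block, and each factor is a local isomorphism of $\mathcal R$; no uniformization of cross-block relation values is needed.

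Your proposed repair via Theorem \ref{thm:ramsey-invariant} cannot work, because that theorem is an \emph{extraction} result: it produces an infinite subset $L'$ on which the restricted triple is invariant, i.e., it yields an almost multichainable \emph{substructure} (possibly with the same age), while discarding the rest of $V$. The theorem you must prove concerns $\mathcal R$ itself: the partition and the order have to live on all of $V$, so you are not free to pass to a subset, and Ramsey-type thinning gives you no control over the elements you threw away. (This is exactly the role Theorem \ref{thm:ramsey-invariant} plays elsewhere in the paper — extracting members of the basis $\mathfrak A_k$ — not proving Theorem \ref{thm:charcfinitemonomorph}.) Note also that the paper itself only gives a self-contained derivation of this theorem in the ordered binary case, as a corollary of Lemma \ref{lem:monomorphy} and Theorem \ref{structureclasses} on the shape of the $\simeq_{\mathcal R}$-classes; there the ``flip'' phenomenon you worry about is absorbed by splitting each non-interval two-element class into singletons, and the given order $\leq$ itself witnesses the conclusion, so no Ramsey argument and no choice of order is needed. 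For the general unordered statement, the missing ingredient in your proposal is the parameterized chainability of each component described above.
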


 We may notice that if $\mathcal R$ is ordered then the given order $\leq$ has the property stated in Theorem \ref{thm:charcfinitemonomorph}.

If a structure $\mathcal R$ admits a finite monomorphic decomposition, then it has some restriction with the same age which is almost multichainable.  As a consequence  of Lemma \ref {lemma.almost_multichainable_age} we have:

\begin{corollary}\label{lem:finitely bounds}
The age $\age(\mathcal R)$ of a relational structure $\mathcal R$ with a finite monomorphic decomposition is very beautiful and  has finitely many bounds.
\end{corollary}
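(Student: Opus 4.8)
The plan is to reduce the statement to Lemma~\ref{lemma.almost_multichainable_age} together with the fact recorded above (a consequence of Higman's theorem on words) that the age of an almost multichainable structure is very beautiful. Concretely, I would produce from $\mathcal R$ a restriction $\mathcal R'$ having \emph{the same age} as $\mathcal R$ and which is almost multichainable; since being very beautiful and having finitely many bounds are properties of the age alone, the corollary for $\mathcal R$ follows at once from the corresponding statements applied to $\mathcal R'$. The case where $V(\mathcal R)$ is finite is trivial (its age is finite), so I may assume $V(\mathcal R)$ is infinite.

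Next I would fix the data of the restriction. By Theorem~\ref{thm:charcfinitemonomorph} there is a linear order $\leq$ on $V:=V(\mathcal R)$ and a finite partition $(V_x)_{x\in X}$ of $V$ into intervals of $(V,\leq)$ such that every local isomorphism of $(V,\leq)$ preserving each block $V_x$ is a local isomorphism of $\mathcal R$. A first observation is that this partition is a monomorphic decomposition: if two finite subsets $A,A'$ satisfy $|A\cap V_x|=|A'\cap V_x|$ for all $x$, then matching $A\cap V_x$ with $A'\cap V_x$ order-isomorphically inside each interval and taking the union yields an interval-preserving local isomorphism of $(V,\leq)$ sending $A$ onto $A'$, hence a local isomorphism of $\mathcal R$; thus the isomorphism type of $\mathcal R_{\restriction A}$ depends only on the vector $(|A\cap V_x|)_{x\in X}$. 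Now split $X=X_{\mathrm{fin}}\cup X_{\mathrm{inf}}$ according to whether $V_x$ is finite or infinite, set $F:=\bigcup_{x\in X_{\mathrm{fin}}}V_x$ (a finite set), and for each $x\in X_{\mathrm{inf}}$ choose an infinite subset $W_x\subseteq V_x$ of order type $\omega$ or $\omega^{\ast}$. Put $L:=\NN$, $K:=X_{\mathrm{inf}}$, identify $\bigcup_{x\in X_{\mathrm{inf}}}W_x$ with $L\times K$ by choosing for each $x$ an order isomorphism or anti-isomorphism $\theta_x$ of $(\NN,\leq)$ onto $W_x$, and let $V':=F\cup(L\times K)$ and $\mathcal R':=\mathcal R_{\restriction V'}$.

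Finally I would verify the two properties of $\mathcal R'$ and conclude. For the age: every cardinality vector $(n_x)_{x\in X}$ realized by a finite subset of $V$ is still realized inside $V'$, since the finite blocks are unchanged and each $W_x$ is infinite, so by the invariance of the isomorphism type on $(|A\cap V_x|)_x$ we obtain $\age(\mathcal R')=\age(\mathcal R)$. For almost multichainability, i.e.\ Condition~(\ref{eq:local}): given a local isomorphism $f$ of $(L,\leq)=(\NN,\leq)$, the induced map $(f,\mathrm{Id})$ on $L\times K$ extended by the identity on $F$ keeps every element inside its own block $V_x$, hence preserves each $V_x$; on the column indexed by $x$ it acts by $\theta_x f\theta_x^{-1}$, which is order-preserving whether $\theta_x$ is an isomorphism or an anti-isomorphism, while across distinct blocks the order is governed by the fixed order of the intervals, so $(f,\mathrm{Id})$ is an order-preserving, interval-preserving local isomorphism of $(V',\leq)$. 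As such it is a local isomorphism of $(V,\leq)$ preserving each block, hence a local isomorphism of $\mathcal R$ by Theorem~\ref{thm:charcfinitemonomorph}, and therefore of $\mathcal R'$. Thus $\mathcal R'$ is almost multichainable, and applying Lemma~\ref{lemma.almost_multichainable_age} and the very-beautiful fact to $\mathcal R'$ yields the corollary. The main obstacle I anticipate is exactly this last verification: checking that the lifted map $(f,\mathrm{Id})$ is genuinely an order- and interval-preserving local isomorphism, which requires the orientation bookkeeping for reverse-well-ordered blocks and the remark that interval-preservation is precisely the hypothesis feeding Theorem~\ref{thm:charcfinitemonomorph}.
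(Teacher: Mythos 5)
Your proof is correct, and it follows the route by which the paper actually derives the corollary: the sentence just before the statement asserts, \emph{without proof}, that a structure with a finite monomorphic decomposition has a restriction with the same age which is almost multichainable, and then invokes Lemma~\ref{lemma.almost_multichainable_age} together with the fact that ages of almost multichainable structures are very beautiful. What you have done is supply the construction behind that one-line assertion, and your verifications are sound: the interval partition from Theorem~\ref{thm:charcfinitemonomorph} is indeed a monomorphic decomposition (so isomorphism types of finite restrictions depend only on the vector of trace cardinalities, which yields $\age(\mathcal R')=\age(\mathcal R)$ since each infinite block meets $V'$ in an infinite set), the extracted $W_x$ of type $\omega$ or $\omega^{\ast}$ exist in any infinite chain, and the conjugation $\theta_x f\theta_x^{-1}$ through an anti-isomorphism reverses twice, so $(f,\mathrm{Id})$ is order-preserving on each column and, because distinct blocks are disjoint intervals and the map is block-preserving, globally an interval-preserving local isomorphism of $(V,\leq)$, hence a local isomorphism of $\mathcal R$ — which is exactly Condition~(\ref{eq:local}). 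Note, however, that the proof the paper writes out in full is a different, more elementary argument that bypasses almost multichainability entirely: it proves the very beautiful property directly by encoding a finite subset $A$, enriched with $k$ unary predicates, as a tuple of words — each $a_i=\vert A\cap V_i\vert$ replaced by a word of length $a_i$ over a $2^k$-letter alphabet — so that w.q.o.\ follows from Dickson's lemma for $k=0$ and from Higman's theorem in general, with the finiteness of the bounds then coming from Theorem~\ref{beautiful}. Both arguments ultimately rest on Higman plus Theorem~\ref{beautiful}; yours buys an explicit almost multichainable representative of the age (a notion the paper exploits elsewhere, e.g.\ for the members of $\mathfrak A_k$), while the paper's encoding avoids precisely the interval and orientation bookkeeping that you rightly identified as the delicate step.
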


We may give an almost self-contained proof. The w.q.o. character of  $\age(\mathcal R)$ is Dickson's Lemma. Indeed, if $V_1, \dots,  V_{\ell}$ is a partition of $V$ into monomorphic blocks, associate to every finite subset $A$ of $V$ the sequence $\vartheta (A):= (\vert A\cap V_i\vert )_{i:=1,\dots,  \ell}$.  If $A$ and $A'$ are two finite subsets and $\vartheta (A) \leq \vartheta (A')$ in the direct product $\NN^{\ell}$ then $\mathcal R_{\restriction A}$ is embeddable  into $\mathcal R_{\restriction A'}$. Since $\NN^{\ell}$ is w.q.o. by Dickson's Lemma,  $\age (\mathcal R)$ is w.q.o. Adding $k$ unary predicates to $\mathcal R$ amounts to replace each $a_i:=\vert A\cap V_i\vert $ by a word of length $a_i$  over an alphabet on $2^{k}$ letters. The w.q.o. character follows from Higman's Theorem on words. The fact that $\mathcal R$ has finitely many bounds follows from Theorem \ref {beautiful}.

\subsection{Proof of Theorem \ref{thm:dichotomy-binary structure} and Corollary \ref{cor:first item}}\label{subsection:proofthmdichotomy}

\subsubsection{Proof of Theorem \ref{thm:dichotomy-binary structure}.}
Suppose that (\ref{itemtheo1}) does not hold. We consider two cases.

\noindent $a)$ $\mathfrak C$ is w.q.o.  by embeddability. \\ In this case, $\mathfrak C$ is a finite union of ideals $\mathfrak C_i$ (this is a special case of a general result about posets due to
Erd\"os-Tarski \cite{erdos-tarski}). According to Lemma \ref{lem:fraisse},  each ideal $\mathfrak C_i$ is the age  of a  structure $\mathcal R_i$. Since $\mathfrak C$ does not satisfy (\ref{itemtheo1}), there is some $\mathcal R_i$ that cannot have a finite monomorphic decomposition. Since $\mathfrak C$ is w.q.o., the set of hereditary subclasses is well founded (Higman \cite{higman52}). Since $\mathfrak C$ contains the age of a structure with no finite monomorphic decomposition, it contains a minimal age with this property.

\noindent $b)$ $\mathfrak C$ is not well-quasi ordered by embeddability.\\
In this case, it contains an infinite antichain. According to Lemma \ref{lem:infinitely bounds},  it contains an age $\mathfrak D$ which is w.q.o. and has infinitely many bounds. According to Lemma \ref{lem:finitely bounds}, no relation having this age can have a finite monomorphic decomposition. Since $\mathfrak D$ is w.q.o., it contains an age minimal with this property.
\hfill $\Box$

\subsubsection{Proof of Corollary \ref{cor:first item}.}

Suppose that $\mathfrak C$ is a finite union of ages $\age(\mathcal R_i)$ of relational structures $\mathcal R_i$ and each $\mathcal R_i$ has a monomorphic decomposition into $\ell_i$ components.  Then, for $\ell:=Max \{\ell_i : i\}$, each member of $\mathfrak C$ has a monomorphic decomposition into at most $\ell$ components.

Now, suppose that there is an integer $\ell$ such that every member of $\mathfrak C$ has a monomorphic decomposition into at most $\ell$ blocks. In this case $\mathfrak C$ cannot satisfy (\ref{itemtheo2}) of Theorem \ref{thm:dichotomy-binary structure}.  Otherwise, $\mathfrak C$ contains an  age $\mathfrak D$ such that no  relational structure $\mathcal R$ having this age has a  finite monomorphic decomposition.  According to Lemma \ref{compactness}, there is no bound on the size of the monomorphic decompositions of the member of $\mathfrak D$, and so the members of $\mathfrak C$. This fact contradicts our  hypothesis. Then,
$\mathfrak C$ satisfy (\ref{itemtheo1}) of Theorem \ref{thm:dichotomy-binary structure}.
\hfill $\Box$
\subsection{ Proof of Proposition \ref{prop:decomp-finie}.}\label{subsectionproofpropdecfinie}

We prove a little more, namely: \\
\emph{The equivalence relation $\simeq_{\mathcal R}$ associated to the ordered structure  $\mathcal R:= (V, \leq, (\rho_i)_{i\in I})$ is the intersection of the equivalence relations $\simeq_{\mathcal R_i}$  associated to $\mathcal R_i:= (V, \leq, \rho_i)$ for $i\in I$}.

Clearly $\simeq_{\mathcal R}$  is included into $\simeq_{\mathcal R_i}$ for each $i\in I$. Conversely, let $x,y\in V$ such that $x\simeq_{\mathcal R_i} y$ for every $i\in I$. Let $F$ be a finite subset of $V\setminus \{x,y\}$. Since $x\simeq_{\mathcal R_i} y$ there is a  local isomorphism  $f_i$ of $\mathcal R_i$ which carries $F \cup \{x\}$ onto $F\cup \{y\}$.  Since $f_i$  is a local isomorphism of $C:= (V, \leq)$ which carries $F \cup \{x\}$ onto $F\cup\{y\}$,  $f_i$  is independent of $i$. Hence,  this is a local isomorphism of $\mathcal R$ proving that  $x\simeq_{\mathcal R}y$. \hfill $\Box$

\section{The case of ordered binary structures}

In this section,  and the next,  we consider ordered binary structures. A crucial property of  ordered  structures is that if two finite substructures  are isomorphic, there is just one isomorphism from one to the other. A repeated use of this property allows us describe the  form  of equivalence classes when these structures are binary.
We start by describing the case of one class. It is immediate to show this:
\begin{lemma}\label{lem:monomorphy}
An ordered binary structure $\mathcal R:= (V, \leq, (\rho_i)_{i\in I})$ is monomorphic iff it is $(\leq 2)$-monomorphic iff  each  relation $\rho_i$ which is reflexive is either a chain which coincide with $\leq$ or its dual, a reflexive clique or an antichain and every relation $\rho_i$ which is irreflexive is either an acyclic tournament which coincide with the strict order $<$  or its dual, a clique or an independent set.
\end{lemma}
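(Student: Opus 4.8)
The plan is to prove the three-way equivalence as a single cycle of implications:
\emph{monomorphic} $\Rightarrow$ \emph{$(\leq 2)$-monomorphic} $\Rightarrow$ \emph{the stated classification} $\Rightarrow$ \emph{monomorphic}. The first arrow is immediate, since being monomorphic means being $n$-monomorphic for every $n$, in particular for $n\leq 2$. The engine driving the two substantial arrows is the crucial property recalled just above the lemma: between two finite substructures of an ordered structure there is at most one isomorphism, namely the unique $\leq$-preserving bijection. This lets me encode the isomorphism type of a two-element restriction $\{x,y\}$ with $x<y$ by the three bits $a_i:=\rho_i(x,y)$, $b_i:=\rho_i(y,x)$ and the loop bit $c_i:=\rho_i(x,x)$ (identifying each $\rho_i$ with its characteristic function); two such restrictions are isomorphic precisely when these bits agree, because the only candidate isomorphism is forced by the order.

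For $(\leq 2)$-monomorphic $\Rightarrow$ classification, I argue pointwise in $i$. One-monomorphy says all singleton restrictions are isomorphic, so for each $i$ the value $\rho_i(x,x)$ is independent of $x$; thus $\rho_i$ is either reflexive ($c_i=1$) or irreflexive ($c_i=0$). Two-monomorphy then forces $a_i$ and $b_i$ to be independent of the chosen pair $x<y$, so each is a global constant. It remains to match the four values of $(a_i,b_i)$ against the two values of $c_i$. In the reflexive case, $(1,0)$ gives $\{(x,y):x\leq y\}$, the chain $\leq$; $(0,1)$ its dual $\geq$; $(1,1)$ the full relation $V\times V$, a reflexive clique; and $(0,0)$ the diagonal, an antichain. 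In the irreflexive case the same four choices give respectively the strict order $<$, its dual $>$, the complete irreflexive symmetric relation (a clique), and the empty relation (an independent set). This is exactly the list in the statement.

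For classification $\Rightarrow$ monomorphic, I observe that each of the eight forms is definable from the order alone: every such $\rho_i$ lies in $\{\,V\times V,\ \emptyset,\ \{(x,y):x\leq y\},\ \{(x,y):x\geq y\},\ \{(x,y):x<y\},\ \{(x,y):x>y\},\ \{(x,y):x=y\},\ \{(x,y):x\neq y\}\,\}$, and each of these sets is invariant under any $\leq$-preserving map. Consequently every local isomorphism of the chain $C:=(V,\leq)$ is a local isomorphism of each $\mathcal R_i$, hence of $\mathcal R$; that is, $\mathcal R$ is chainable. Since chainable structures are monomorphic (as recalled in Subsection \ref{subsection:basic}), $\mathcal R$ is monomorphic, closing the cycle.

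The argument is essentially bookkeeping once the right viewpoint is fixed, so I do not expect a serious obstacle; the only point requiring care is the appeal to uniqueness of the order isomorphism, which is what converts the global monomorphy condition into finitely many pointwise constraints on each $\rho_i$ and legitimizes reading off the eight canonical forms. One should also record the degenerate cases $|V|\leq 1$, where two-monomorphy and the forward/backward constants are vacuous and every assertion holds trivially.
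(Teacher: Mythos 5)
Your proof is correct and is essentially the argument the paper intends: the paper offers no written proof (the lemma is introduced with ``It is immediate to show this''), and your cycle---the uniqueness of the order isomorphism between finite restrictions (the ``crucial property'' stated just before the lemma) forcing the loop bit $c_i$ and the pair bits $(a_i,b_i)$ to be global constants, reading off the eight order-definable relations, and then invoking chainability $\Rightarrow$ monomorphy as recalled in Subsection \ref{subsection:basic}---is exactly the routine verification being alluded to. No gaps; your handling of the degenerate cases $\vert V\vert\leq 1$ and your use of $1$-monomorphy to make the loop bit constant (so each $\rho_i$ is genuinely reflexive or irreflexive) are the only points needing care, and you address both.
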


In order to describe the equivalence classes in general, we introduce some notation.
 Let $\mathcal R:=(V,\leq, (\rho_i)_{i\in I})$ be an ordered binary structure. Identifying a relation to its characteristic function,  we  set
$d_i(x,y):=(\rho_i(x,y),\rho_i(y,x))$  for all $x,y\in V$ and $i\in I$ and $d(x,y):= (d_i(x,y))_{i\in I}$. We may note that the value of $d(x,y)$ determines the value of $d(y,x)$. In fact, set  $\overline u:= (\beta, \alpha)$ for every $u:=(\alpha, \beta)\in \{0,1\}\times \{0,1\}$ and $\overline {( u_i)_{i\in I}}:= (\overline u_i)_{i\in I}$ for every sequence of members of $\{0,1\}\times \{0,1\}$. Doing so, we have
$d(y,x)= \overline{d(x,y)}$.

 Set $C:=(V,\leq)$. Let $x,y\in V$. If $x<y$ in $V$, we set $[x, y]:= \{z\in V: x\leq z\leq y\}$ and $]x,y [:= \{z\in V: x< z< y\}$. If the order between $x$ and $y$ is not given, we denote by $I_{\leq}(x,y)$ the least interval of $C$ containing $x$ and $y$.
 We recall that a subset $A$ of $V$ is an \emph{interval} of $\mathcal R$ if it is an interval of $C$ and
 \begin{equation}\label{eq:interval}
 d(x,y)=d(x',y)
 \end{equation}
 for every $x,x'\in A$ and $y\not\in A$.

 The following fact is obvious:

 \begin{fact}\label{fact:1-equiv-binaire}
Two elements $x,y$ of $V$ such that $x<y$ are $1$-equivalent if and only if
\begin{equation}\label{equ:first equality}
d(x,z)=d(z,y)  \; \text{for every}\; z\in ]x, y[
\end{equation}
and
\begin{equation}\label{equ:second equality}
d(x,z)=d(y,z)  \; \text{ for every}\; z\notin [x,y].
\end{equation}
\end{fact}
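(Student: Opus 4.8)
The plan is to unfold the definition of $1$-equivalence and to reduce it, point by point, to the two displayed equalities by exploiting the rigidity of ordered structures. Recall from Subsection \ref{subsection:basic} that $x\simeq_{1,\mathcal R}y$ means exactly that for every single point $z\in V\setminus\{x,y\}$ the restrictions $\mathcal R_{\restriction\{x,z\}}$ and $\mathcal R_{\restriction\{y,z\}}$ are isomorphic. The key leverage is the rigidity recalled at the beginning of this section: since $\mathcal R$ carries the linear order $\leq$, there is at most one isomorphism between any two of its finite restrictions, and it must be the unique order-preserving bijection between their domains. Hence, for each $z$, deciding whether $\mathcal R_{\restriction\{x,z\}}\cong\mathcal R_{\restriction\{y,z\}}$ reduces to writing this order-preserving bijection explicitly and testing whether it transports every relation $\rho_i$.

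First I would fix $z\in V\setminus\{x,y\}$ and distinguish the two possible positions of $z$ relative to $[x,y]$, noting that every such $z$ lies either in $]x,y[$ or outside $[x,y]$. If $z\in\,]x,y[$, so that $x<z<y$, the order-preserving bijection from $\mathcal R_{\restriction\{x,z\}}$ onto $\mathcal R_{\restriction\{y,z\}}$ matches smaller with smaller and larger with larger, i.e. it sends $x\mapsto z$ and $z\mapsto y$; it is an isomorphism exactly when $\rho_i(x,z)=\rho_i(z,y)$ and $\rho_i(z,x)=\rho_i(y,z)$ for all $i$, that is, when $d(x,z)=d(z,y)$. If instead $z\notin[x,y]$, so that $z<x$ or $y<z$, the corresponding bijection fixes $z$ and sends $x\mapsto y$; it is an isomorphism exactly when $\rho_i(x,z)=\rho_i(y,z)$ and $\rho_i(z,x)=\rho_i(z,y)$ for all $i$, that is, when $d(x,z)=d(y,z)$. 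Ranging over all $z$ and collecting the two cases then yields the asserted equivalence between $x\simeq_{1,\mathcal R}y$ and the conjunction of (\ref{equ:first equality}) and (\ref{equ:second equality}).

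The one point where I would be careful is the reflexive part. The candidate order-preserving bijection must also respect the loops, so beyond the off-diagonal conditions above it forces $\rho_i(x,x)=\rho_i(y,y)$ in the case $z\notin[x,y]$ and $\rho_i(x,x)=\rho_i(z,z)=\rho_i(y,y)$ in the case $z\in\,]x,y[$. When each $\rho_i$ is uniformly reflexive or uniformly irreflexive---which is the situation relevant here, as in Lemma \ref{lem:monomorphy}---the diagonal of every $\rho_i$ is constant, so these identities hold automatically and add nothing to (\ref{equ:first equality})--(\ref{equ:second equality}); this is precisely why the diagonal does not appear in the statement. I expect this bookkeeping of the diagonal to be the only place where the argument is not purely mechanical, the off-diagonal reduction being forced directly by the rigidity of ordered structures.
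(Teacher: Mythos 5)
Your proof is correct and coincides with the paper's intended argument: the paper states this Fact without proof (introducing it as obvious), and the justification is exactly your unfolding of $1$-equivalence via the rigidity of ordered structures — for each $z$ the unique order-preserving bijection is the only candidate isomorphism, and the case split $z\in\, ]x,y[$ versus $z\notin [x,y]$ yields (\ref{equ:first equality}) and (\ref{equ:second equality}) respectively. Your remark on the diagonal is moreover a genuine refinement rather than mere bookkeeping: if some $\rho_i$ is neither reflexive nor irreflexive (say it has a loop at a single point $z\in\, ]x,y[$ but at neither $x$ nor $y$), then (\ref{equ:first equality}) and (\ref{equ:second equality}) can hold while $\mathcal R_{\restriction \{x,z\}}$ and $\mathcal R_{\restriction \{z,y\}}$ fail to be isomorphic, so the stated equivalence tacitly assumes each $\rho_i$ has constant diagonal — precisely the hypothesis you isolate, and one the paper leaves implicit.
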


\medskip

\begin{lemma}\label{lem:2equiv}
Let $\mathcal R:=(V,\leq , (\rho _{i})_{i\in I})$ be an ordered binary structure. Two elements $x,y$ of $V$ such that $x<y$ are ($\leq 2$)-equivalent if and only if $[x,y]$ is a $1$-monomorphic  interval of $\mathcal R$ and the restrictions of $\mathcal R$ to any two $2$-element subsets of $[x,y]$ distinct of $\{x,y\}$  are isomorphic.
\end{lemma}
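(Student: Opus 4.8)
The plan is to unfold the definition: $x$ and $y$ are $(\le 2)$-equivalent exactly when $x\simeq_{0,\mathcal R}y$, $x\simeq_{1,\mathcal R}y$ and $x\simeq_{2,\mathcal R}y$ all hold. The whole argument rests on the uniqueness of isomorphisms between finite ordered structures recalled above: for any finite $F\subseteq V\setminus\{x,y\}$, an isomorphism from $\mathcal R_{\restriction \{x\}\cup F}$ to $\mathcal R_{\restriction \{y\}\cup F}$, if one exists, must be the unique order-preserving bijection between these two chains. Hence testing $x\simeq_{F,\mathcal R}y$ reduces to checking that this explicit bijection preserves the loop values $(\rho_i(u,u))_{i\in I}$ and the quantities $d(u,v)$ along every pair. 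I would first record the translation of the right-hand side into this language: writing $A:=[x,y]$, the condition ``$A$ is a $1$-monomorphic interval'' means that the loop value is constant on $A$ and that $d(u,t)$ is independent of $u\in A$ for every $t\notin A$; and, granting $1$-monomorphicity, the condition on $2$-element subsets means precisely that $d(u,v)$ takes one common value $\delta$ over all pairs $u<v$ in $A$ with $\{u,v\}\ne\{x,y\}$.

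For the direction $(\Leftarrow)$ I would assume these two conditions and verify the three equivalences. Here $\simeq_{0,\mathcal R}$ is immediate from constancy of the loop value on $A$. For $\simeq_{1,\mathcal R}$ I would invoke Fact \ref{fact:1-equiv-binaire}: its first equality $d(x,z)=d(z,y)$ for $z\in\,]x,y[$ follows from $d$-constancy applied to the pairs $\{x,z\}$ and $\{z,y\}$, while its second equality $d(x,z)=d(y,z)$ for $z\notin[x,y]$ is the interval property at $t=z$. For $\simeq_{2,\mathcal R}$ I would run a short case analysis on the positions of the two witnesses $z<w$ relative to $A$, each being to the left of $x$, inside $]x,y[$, or to the right of $y$, giving six cases. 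In each case the unique order-preserving bijection $\{x,z,w\}\to\{y,z,w\}$ is written down explicitly, and each of its three edge-conditions is matched either to the interval property (for a pair with one endpoint outside $A$) or to the value $\delta$ (for a pair internal to $A$); loop values match everywhere by $1$-monomorphicity, since the elements outside $A$ are fixed by the bijection.

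For the direction $(\Rightarrow)$ I would assume $(\le 2)$-equivalence and recover the two conditions. The loop value is constant on $A$ because $\simeq_{0,\mathcal R}$ gives $\rho_i(x,x)=\rho_i(y,y)$ while $\simeq_{1,\mathcal R}$ applied to a single interior witness $z\in\,]x,y[$ forces $\rho_i(z,z)$ to equal these; this is the $1$-monomorphicity. The interval property comes from $\simeq_{2,\mathcal R}$: taking one interior witness $u$ and one exterior witness $t$, the order-preserving bijection yields $d(u,t)=d(x,t)=d(y,t)$, so $d(\cdot,t)$ is constant on $A$. Finally the $d$-constancy follows from the first equality of Fact \ref{fact:1-equiv-binaire} together with $\simeq_{2,\mathcal R}$ applied to two interior witnesses $u<v$, which gives $d(x,u)=d(u,v)=d(v,y)$ and $d(x,v)=d(u,y)$; combining these shows $d(x,\cdot)$ is constant on the interior and equal to the value on every interior pair, so all pairs of $A$ other than $\{x,y\}$ carry the common value $\delta$.

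The main obstacle is simply the bookkeeping of the $\simeq_{2,\mathcal R}$ case analysis in both directions: one must check that the explicit order-preserving bijection is the correct one in each configuration of $z,w$ and that every pair it creates is correctly classified as crossing the boundary of $A$, and so governed by the interval property, or as internal to $A$, and so governed by $\delta$. A secondary point needing care is the degenerate situation where $[x,y]$ has at most one interior point, so that the two-interior-witness argument for $d$-constancy is unavailable; there the $2$-element condition is either vacuous or reduces directly to the first equality, and these small cases should be disposed of at the outset.
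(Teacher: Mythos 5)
Your proof is correct, and in the forward direction it is essentially the paper's argument: the same rigidity principle (a finite ordered structure admits at most one isomorphism onto another, namely the order-preserving bijection) applied to the small witness sets $\{z\}$, $\{a,z\}$ and $\{z,z'\}$, yielding the interval identity $d(a,x)=d(a,z)=d(a,y)$ and the shifted chain $x\mapsto z\mapsto z'\mapsto y$ that forces a common value $\delta$ on all pairs of $[x,y]$ other than $\{x,y\}$; your explicit bookkeeping of loop values and your separate treatment of the degenerate cases where $]x,y[$ has at most one point match what the paper does. Where you genuinely diverge is the converse. You verify $\simeq_{0}$, $\simeq_{1}$ and $\simeq_{2}$ directly, via a six-case analysis on the positions of the two witnesses relative to $[x,y]$; the paper instead takes an \emph{arbitrary} finite $F\subseteq V\setminus\{x,y\}$ and shows in one uniform stroke that the unique order-isomorphism $\varphi$ from $F\cup\{x\}$ onto $F\cup\{y\}$ preserves $\mathcal R$, each pair being either fixed outside $[x,y]$, boundary-crossing (governed by the interval property), or internal (governed by $\delta$) --- exactly the three pair-types of your case analysis, but quantified over all $F$ at once. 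The paper's converse therefore proves the strictly stronger conclusion $x\simeq_{F,\mathcal R}y$ for every finite $F$, i.e.\ full equivalence, of which $(\leq 2)$-equivalence is the special case stated. This surplus is not decorative: the proof of Theorem \ref{thm:bin-ordered} invokes Lemma \ref{lem:2equiv} to conclude that the restrictions of $\mathcal R$ to $F\cup\{x\}$ and $F\cup\{y\}$ are isomorphic for a minimal bad set $F\subseteq [x,y]$ of arbitrary size, which follows from the paper's converse argument but not from the lemma's literal statement nor from your bounded check. So your route buys a completely elementary, finitely checkable verification of the equivalence as stated, while the paper's buys the uniform statement the sequel actually uses; if your version were adopted, one should either restate the lemma in the stronger form or observe that your three pair-types already handle arbitrary $F$, so the generalization costs nothing.
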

\begin{proof}
Suppose that $x$ and $y$ are ($\leq 2$)-equivalent. We claim that  $[x,y]$ is a $1$-monomorphic  interval of $\mathcal R$. If $[x,y]=\{x,y\}$ then since $x$ and $y$ are $0$-equivalent, $\mathcal R_{\restriction \{x\}}$ and $\mathcal R_{\restriction \{y\}}$ are isomorphic and thus $[x,y]$ is $1$-monomorphic.  Since $x$ and $y$ are $1$-equivalent, the fact that $[x,y]$ is an interval of $\mathcal R$ follows  from $(\ref{equ:second equality})$  of  Fact \ref{fact:1-equiv-binaire}. If $[x,y]\neq\{x,y\}$, let $z\in ]x,y[$ and $a\notin [x,y]$ (if any). Since $x$ and $y$ are $1$-equivalent, $\mathcal R_{\restriction \{x,z\}}$ and $\mathcal R_{\restriction \{z,y\}}$ are isomorphic (and the only isomorphism sends $x$ to $z$ and $z$ to $y$), in particular $\mathcal R_{\restriction \{x\}}$ and $\mathcal R_{\restriction \{z\}}$ are isomorphic, hence $[x,y]$ is $1$-monomorphic. Since $x$ and $y$ are $2$-equivalent, the  restrictions of $\mathcal R$ to $\{x,a,z\}$ and $\{y,a,z\}$ are isomorphic; the only isomorphism  sends $a$ onto $a$, $x$ to $z$ and $z$ to $y$, hence $d(a,z)=d(a,x)=d(a,y)$, proving that  $[x,y]$ is an interval of $\mathcal R$. We look at the   $2$-element subsets  of $[x,y]$ which are distinct from $\{x,y\}$. If $[x,y]=\{x,y\}$ there are none and there is nothing to prove. Otherwise,   $[x,y]\not =\{x,y\}$. Let $z\in ]x,y[$.   Since $x$ and $y$ are $1$-equivalent, $\mathcal R_{\restriction \{x,z\}}$ and $\mathcal R_{\restriction \{z,y\}}$ are isomorphic.
 If there is no other element than $z$ this yields the conclusion of the lemma. If there is an  other element $z'$ we may suppose $z<z'$. Since $x$ and $y$ are $2$-equivalent, there is an isomorphism of  $\mathcal R_{\restriction \{x,z,z'\}}$ onto $\mathcal R_{\restriction \{z,z',y\}}$ and in fact a unique one. It sends $x$ to $z$, $z$ to $z'$ and $z'$ to $y$. It follows that the five restrictions  $\mathcal R_{\restriction \{z,z' \}}$, $\mathcal R_{\restriction \{x,z\}}$, $\mathcal R_{\restriction \{x,z'\}}$, $\mathcal R_{\restriction \{y,z\}}$, $\mathcal R_{\restriction \{y,z'\}}$ are isomorphic. This property  yields immediately the conclusion of the lemma if there is no other element. If there are others, then this property  also says that all the restrictions of $\mathcal R$ to all pairs $\{x,z\}$  for which $z\not =y$ are isomorphic; since the restrictions of $\mathcal R$ to pairs $\{z,z'\}$  are isomorphic to the restrictions of  $\mathcal R$ to such pairs $\{x,z\}$,  the restrictions of $\mathcal R$   to all pairs distinct of  $\{x,y\}$ are isomorphic, as claimed.

Conversely, suppose that $[x,y]$ is a $1$-monomorphic  interval of $\mathcal R$ and the restrictions of $\mathcal R$ to any two $2$-element subsets of $[x,y]$ distinct of $\{x,y\}$  are isomorphic. Let $F$ be a finite subset of $V\setminus \{x, y\}$ and $\varphi$ be the unique order-isomorphism from $F \cup \{x\}$ onto $F\cup \{y\}$. Let  $z, z' \in F$. If $z\in F\setminus[x,y]$ then $\varphi (z)=z$. If $z'\in [x,y]$ then $\varphi(z')\in [x,y]$ and  since $[x,y]$ is a $1$-monomorphic  interval of $\mathcal R$, the map $\varphi$ induces an isomorphism of $\mathcal R_{\restriction \{z,z'\}}$ onto $\mathcal R_{\restriction \{\varphi(z),\varphi (z')\}}$. If $z,z'\in  [x,y]$ then $\varphi(z), \varphi (z')\in [x,y]$. From the $1$-monomorphy of $[x,y]$ and the fact that the $2$-element subsets of $[x,y]$ distinct of $\{x,y\}$  are isomorphic, $\varphi$ induces an isomorphism of $\mathcal R_{\restriction \{z,z'\}}$ onto $\mathcal R_{\restriction \{\varphi(z),\varphi (z')\}}$. It follows that $\varphi$ is an isomorphism of $\mathcal R_{\restriction F\cup \{x\}}$ onto $\mathcal R_{\restriction F\cup \{y\}}$ hence $x\simeq_{F, \mathcal R }y$.  In particular $x$ and $y$ are $(\leq 2)$-equivalent.
\end{proof}
\begin{corollary}\label{lem:formeclasseequiv} Let  $\mathcal R:=(V,\leq , (\rho _{i})_{i\in I})$ be  an ordered binary structure and $A$ be a subset of $V$.
If  $A$ is an interval of $(V, \leq)$ included into some $(\leq 1)$-equivalence class then it is included into a $(\leq 2)$-equivalence class.
\end{corollary}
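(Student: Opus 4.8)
The plan is to fix two elements $x<y$ of $A$ and show that they are $(\leq 2)$-equivalent. Since $\simeq_{\leq 2,\mathcal R}$ is an equivalence relation (as recalled above) and $x,y$ are arbitrary, this places all of $A$ into a single $(\leq 2)$-equivalence class. Because $A$ is an interval of $(V,\leq)$ containing $x$ and $y$, we have $[x,y]\subseteq A$, so every element of $[x,y]$ lies in the given $(\leq 1)$-equivalence class; in particular any two elements of $[x,y]$ are $0$- and $1$-equivalent. I would then verify the two hypotheses of Lemma \ref{lem:2equiv} for the pair $x,y$.

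The first hypothesis, that $[x,y]$ is a $1$-monomorphic interval of $\mathcal R$, is the routine part. The $1$-monomorphy is immediate, since any two singletons of $[x,y]$ are isomorphic because their elements are $0$-equivalent. To see that $[x,y]$ is an interval of $\mathcal R$, take $u<u'$ in $[x,y]$ and $z\notin[x,y]$; as $u$ and $u'$ are $1$-equivalent and $z\notin[u,u']$, the equality $(\ref{equ:second equality})$ of Fact \ref{fact:1-equiv-binaire} gives $d(u,z)=d(u',z)$, which is exactly the interval condition $(\ref{eq:interval})$.

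The heart of the proof is the second hypothesis: every two $2$-element subsets of $[x,y]$ other than $\{x,y\}$ must be isomorphic, equivalently the value $d(u,u')$ (for $u<u'$) must equal one fixed $c$ for all such pairs. I would establish this in two moves, each an application of Fact \ref{fact:1-equiv-binaire} to a well-chosen $1$-equivalent pair. First, for $x<z<z'\leq y$, applying $(\ref{equ:first equality})$ to the $1$-equivalent pair $x,z'$ with interior point $z$ yields $d(x,z)=d(z,z')$; the choice $z'=y$ also gives $d(x,z)=d(z,y)$. This expresses every admissible $d$-value in terms of the values $d(x,z)$ with $z\in\,]x,y[$. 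Second, to see that these are all equal, take $z_1<z_2$ in $]x,y[$ and apply $(\ref{equ:second equality})$ to the $1$-equivalent pair $z_1,z_2$ with the exterior point $x\notin[z_1,z_2]$, obtaining $d(z_1,x)=d(z_2,x)$; since $d(z_i,x)=\overline{d(x,z_i)}$ and the bar operation is an involution, $d(x,z_1)=d(x,z_2)$. Hence $d(x,z)$ is a constant $c$ on $]x,y[$, and by the first move $d(z,z')=d(z,y)=c$ as well, so all pairs distinct from $\{x,y\}$ carry the value $c$ and are pairwise isomorphic. Lemma \ref{lem:2equiv} then gives that $x$ and $y$ are $(\leq 2)$-equivalent.

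The main obstacle is precisely this constancy statement: $(\leq 1)$-equivalence only compares pairs sharing a common element against a single interior or exterior witness, so one must feed Fact \ref{fact:1-equiv-binaire} the right auxiliary point — the interior point $z$ to reach pairs $\{z,z'\}$, and the exterior point $x$ to transport between different $\{x,z\}$ — in order to propagate the single value $c$ across all admissible pairs while leaving $\{x,y\}$ itself unconstrained. One should also dispatch the degenerate case $[x,y]=\{x,y\}$ with a word: there are then no admissible pairs, the second hypothesis holds vacuously, and the conclusion follows from the first alone.
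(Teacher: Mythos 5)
Your proof is correct and takes essentially the same route as the paper: the paper's proof of this corollary is precisely an application of Lemma \ref{lem:2equiv} to an arbitrary pair $x<y$ in $A$, asserting without detail that the interval property of $A$ and the $(\leq 1)$-equivalence of its elements yield the two hypotheses of that lemma. Your two applications of Fact \ref{fact:1-equiv-binaire} (the interior witness $z$ giving $d(x,z)=d(z,z')$, the exterior witness $x$ giving constancy of $d(x,z)$ on $\,]x,y[$) simply fill in the verification the paper leaves implicit, and do so correctly.
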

\begin{proof} We apply Lemma \ref{lem:2equiv}. Let $x, y\in A$ with $x<y$. Since $A$ is an interval of $(V, \leq)$ and all the elements of $A$ are $(\leq 1)$-equivalent,  $[x,y]$ is a $1$-monomorphic  interval of $\mathcal R$ and the restrictions of $\mathcal R$ to any two $2$-element subsets of $[x,y]$ distinct of $\{x,y\}$  are isomorphic, hence by Lemma \ref{lem:2equiv}, $x$ and $y$ are $(\leq 2)$-equivalent.
 \end{proof}

\begin{remark} The fact that two elements $x$ and $y$ are $(\leq 2)$-equivalent does not imply that the interval $[x,y]$ is $2$-monomorphic. Indeed, let $\mathcal R: = (V, \leq , \rho)$ where $\rho$ coincides with $\leq$ except on one ordered pair $(x,y)$ with $x<y$. In such an  example, the set $V$ decomposes into four equivalence classes, namely $\{x,y\}$ and the three open intervals of $(V, \leq)$:  $] \leftarrow x[$, $]x,y[$  and $]x  \rightarrow[$  determined by $x$ and $y$.
\end{remark}

The form of the equivalence classes for an arbitrary ordered binary structure is given in Theorem \ref{lem:formeclasseequiv} below. Before,  we extract the following result from Lemma \ref{lem:2equiv}.

\begin{theorem}\label{thm:bin-ordered}
On  an ordered binary structure $\mathcal R:=(V,\leq , (\rho _{i})_{i\in I})$, the equivalence relations $\simeq _{\leq 2,%
    \mathcal R}$ and $\simeq _{\mathcal R}$ coincide.
\end{theorem}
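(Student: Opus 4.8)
The plan is to prove the two inclusions separately, noting that one is immediate and that the other is, in essence, already established inside the proof of Lemma~\ref{lem:2equiv}. The inclusion $\simeq_{\mathcal R}\,\subseteq\,\simeq_{\leq 2,\mathcal R}$ is trivial from the definitions: if $x\simeq_{k,\mathcal R}y$ for every integer $k$, then in particular $x\simeq_{k',\mathcal R}y$ for every $k'\leq 2$, which is exactly $x\simeq_{\leq 2,\mathcal R}y$. So the entire content lies in the reverse inclusion, and I would spend all the work there.

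For the reverse inclusion, I would start from two elements $x,y\in V$ with $x\simeq_{\leq 2,\mathcal R}y$ and aim to show $x\simeq_{\mathcal R}y$. If $x=y$ there is nothing to prove, so I may assume, after possibly exchanging them, that $x<y$ in $C:=(V,\leq)$. The forward direction of Lemma~\ref{lem:2equiv} then applies verbatim and yields the structural consequences of $(\leq 2)$-equivalence: the interval $[x,y]$ is a $1$-monomorphic interval of $\mathcal R$, and the restrictions of $\mathcal R$ to any two $2$-element subsets of $[x,y]$ distinct from $\{x,y\}$ are mutually isomorphic. These are precisely the two hypotheses that feed the converse part of the proof of Lemma~\ref{lem:2equiv}.

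The key observation I would then make explicit is that the converse argument in the proof of Lemma~\ref{lem:2equiv}, although its stated conclusion is merely $(\leq 2)$-equivalence, in fact proves strictly more: it takes an \emph{arbitrary} finite subset $F\subseteq V\setminus\{x,y\}$, forms the unique order-isomorphism $\varphi$ from $F\cup\{x\}$ onto $F\cup\{y\}$, and checks pair by pair that $\varphi$ carries $\mathcal R_{\restriction F\cup\{x\}}$ isomorphically onto $\mathcal R_{\restriction F\cup\{y\}}$, with no restriction on $\vert F\vert$. Thus that argument already gives $x\simeq_{F,\mathcal R}y$ for every finite $F$, which is exactly the defining condition for $x\simeq_{\mathcal R}y$. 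Invoking it here closes the reverse inclusion and hence proves that $\simeq_{\leq 2,\mathcal R}$ and $\simeq_{\mathcal R}$ coincide.

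I do not expect a serious obstacle, since the theorem is really a repackaging of Lemma~\ref{lem:2equiv}; the one point that must be handled with care is the propagation from $2$-element data to arbitrary $F$. What makes this propagation work — and what I would flag as the crucial input rather than a difficulty — is the rigidity of ordered structures recalled at the start of Section~4: between two isomorphic finite ordered substructures there is exactly one isomorphism, so the map $\varphi$ on $F\cup\{x\}$ is forced to be the order-isomorphism, and the interval and $1$-monomorphy conditions then guarantee it respects every relation $\rho_i$ on each pair. This is why the local, $2$-element information encoded in $(\leq 2)$-equivalence is already enough to force global equivalence.
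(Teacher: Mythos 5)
Your proof is correct, but it reaches the conclusion by a more direct route than the paper's. The paper argues by contradiction: it picks a finite $F$ with $x\not\simeq_{F,\mathcal R}y$ that is \emph{minimal} under inclusion, uses the interval property of $[x,y]$ to show that any point of $F$ outside $[x,y]$ could be removed (an isomorphism on $F'\cup\{x\}$, where $F':=F\cap[x,y]$, extends by the identity on $F\setminus[x,y]$), concludes $F\subseteq[x,y]$, and only then invokes Lemma~\ref{lem:2equiv}, where the uniformity of pair types inside $[x,y]$ settles the matter. You dispense with the minimality and contradiction scaffolding entirely, by observing that the converse half of the proof of Lemma~\ref{lem:2equiv} is already written for an \emph{arbitrary} finite $F\subseteq V\setminus\{x,y\}$: it builds the unique order-isomorphism $\varphi\colon F\cup\{x\}\to F\cup\{y\}$, which fixes the points outside $[x,y]$, and verifies it respects every relation, so chaining the two directions of the lemma yields $x\simeq_{F,\mathcal R}y$ for every finite $F$ at once. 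Both arguments rest on the same two structural inputs — the interval condition for points outside $[x,y]$, and pair-type uniformity inside — but the packaging differs: the paper's reduction to $F\subseteq[x,y]$ would be the natural move if the lemma's converse had only been established for subsets of $[x,y]$, whereas your reading makes explicit that the paper's proof of Lemma~\ref{lem:2equiv} already contains the full-strength statement, rendering the minimality step redundant. One shared caveat you should close explicitly, since your proof inherits it: the converse argument in the paper checks pairs $z,z'\in F$ but not the pairs $\{x,z\}$ against $\{\varphi(x),\varphi(z)\}$; these are handled the same way (the interval condition when $z\notin[x,y]$, pair uniformity when $z\in\,]x,y[$, noting that neither pair equals $\{x,y\}$ because $\varphi(x)>x$), so the omission is cosmetic, but it is exactly the rigidity point you flag and deserves a sentence.
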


\begin{proof}
Suppose by contradiction that there are two elements $x,y\in V$ with $x<y$, $x\simeq_{\leq 2,\mathcal R}y$ and $x\not\simeq_{F,\mathcal R}y$ for some finite $F\subseteq V\setminus \{x,y\}$. We may choose $F$ minimal w.r.t inclusion. We claim that $F\subseteq [x,y]$. Indeed, set $F':= [x,y]\cap F$ and $V':=F\cup\{x,y\}$. If $F'\neq F$ then,  by minimality of $F$,  $\mathcal R_{\restriction{F'\cup\{x\}}}$ and
$\mathcal R_{\restriction {F'\cup\{y\}}}$ are isomorphic. Since $[x,y]$ is an interval of $\mathcal R$, $[x,y]\cap V'$ is an interval of $\mathcal R\restriction_{V'}$, hence any isomorphism of $\mathcal R_{\restriction_{F'\cup\{x\}}}$ onto $\mathcal R_{\restriction_{F'\cup\{y\}}}$ extended by the identity on $F\setminus [x,y]$ is an isomorphism of $\mathcal R_{\restriction_{F\cup\{x\}}}$ onto $\mathcal R_{\restriction_{F\cup\{y\}}}$. But then, $x\simeq_{F,\mathcal R}y$. This contradicts our hypothesis and proves our claim. According to Lemma \ref{lem:2equiv}, the restrictions of $\mathcal R$ to $F\cup\{x\}$ and $F\cup\{y\}$ are isomorphic. \end{proof}

\medskip

\begin{theorem} \label{structureclasses}An equivalence class $A$ of an ordered binary structure  $\mathcal R:=(V,\leq , (\rho _{i})_{i\in I})$ is either an interval of $(V,\leq)$ or consists of two distinct elements $x$, $y$ with $x<y$ such that the interval $]x,y[$ forms an other equivalence class.
\end{theorem}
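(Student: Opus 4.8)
The plan is to phrase everything through $\simeq_{\leq 2,\mathcal R}$, which coincides with $\simeq_{\mathcal R}$ by Theorem \ref{thm:bin-ordered}, so that the criterion of Lemma \ref{lem:2equiv} applies to any two members of a class. If $A$ is an interval of $(V,\leq)$ there is nothing to do; otherwise I would fix $x<y$ in $A$ together with some $z\in\,]x,y[\,$ with $z\notin A$ witnessing the failure, and aim to show that $A=\{x,y\}$ while $]x,y[$ is a single equivalence class.

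First I would extract a local criterion inside $[x,y]$. Applying Lemma \ref{lem:2equiv} to $x\simeq y$ shows that $[x,y]$ is a $1$-monomorphic interval of $\mathcal R$ on which all $2$-element subsets other than $\{x,y\}$ are mutually isomorphic; by rigidity of isomorphisms of ordered structures, this means $d(u,v)$ takes a constant value $\delta$ on all pairs $u<v$ of $[x,y]$ with $\{u,v\}\neq\{x,y\}$, while $d(x,y)=:\delta_0$ may differ. The key point is then: for $u<v$ in $[x,y]$ one has $u\simeq v$ if and only if $[u,v]$ is an interval of $\mathcal R$ in the sense of (\ref{eq:interval}). Indeed $[u,v]$ is automatically $1$-monomorphic, and the second clause of Lemma \ref{lem:2equiv} holds for free, since every $2$-subset of $[u,v]$ other than $\{u,v\}$ is a $2$-subset of $[x,y]$ other than $\{x,y\}$, hence of value $\delta$.

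Next I would split on the value of $\delta_0$. If $\delta_0=\delta$ then every $d$-value inside $[x,y]$ is generic, so every subinterval $[u,v]$ satisfies (\ref{eq:interval}) and the criterion forces all of $[x,y]$ into a single class, contradicting $z\notin A$; hence $\delta_0\neq\delta$. Now the position of the exceptional pair $\{x,y\}$ decides membership. For $u<v$ with $u,v\in\,]x,y[$, the pair $\{x,y\}$ is not contained in $[u,v]$, so $d$ stays generic toward every outside witness, $[u,v]$ is an interval of $\mathcal R$, and $u\simeq v$. Conversely $[x,z]$ is \emph{not} an interval of $\mathcal R$: one has $y\notin[x,z]$ yet $d(x,y)=\delta_0\neq\delta=d(z,y)$, so $x\not\simeq z$, and symmetrically $y\not\simeq z$. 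Thus $]x,y[$ lies inside a single class $B\neq A$.

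The remaining, and I expect hardest, step is to show that the two classes are exactly $\{x,y\}$ and $]x,y[$, i.e. that neither leaks outside $[x,y]$; the engine is that $p\simeq q$ forces $[p,q]$ to be an interval of $\mathcal R$, so $d(\cdot,w)$ is constant on $[p,q]$ for every outside witness $w$, together with the fact that isomorphic $2$-subsets have equal $d$-values. If some $a\in A$ had $a>y$, then applying Lemma \ref{lem:2equiv} to $x\simeq a$ would make $\{x,y\}$ and $\{x,z\}$ isomorphic $2$-subsets of $[x,a]$, giving $\delta_0=d(x,y)=d(x,z)=\delta$, a contradiction; the case $a<x$ is symmetric via $y\simeq a$ and the pair $\{z,y\}$. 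Likewise, if some $b\in B$ had $b>y$, then $z\simeq b$ makes $[z,b]$ an interval of $\mathcal R$, and testing it with the witness $x\notin[z,b]$ yields $d(z,x)=d(y,x)$, that is $\overline{\delta}=\overline{\delta_0}$, hence again $\delta_0=\delta$; the case $b<x$ is symmetric using the witness $y$. This forces $A=\{x,y\}$ and $B=\,]x,y[$, completing the dichotomy.
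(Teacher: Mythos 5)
Your proof is correct, and while it rests on the same two pillars as the paper---Theorem \ref{thm:bin-ordered} to reduce $\simeq_{\mathcal R}$ to $\simeq_{\leq 2,\mathcal R}$, and Lemma \ref{lem:2equiv} as the workhorse---its central mechanism is genuinely different. The paper first observes that $\vert A\vert\geq 3$ forces $A$ to be an interval, and then reduces the remaining case to a standalone transitivity-type statement (Claim \ref{sandwich}: if $x<x'<y<y'$ with $x\simeq_{\leq 2,\mathcal{R}}y$ and $x'\simeq_{\leq 2,\mathcal{R}}y'$, then $x\simeq_{\leq 2,\mathcal{R}}y'$), proved by isomorphism-chasing in two cases, the second being the delicate four-point configuration $]x,y'[\,=\{x',y\}$ in which six pairwise restrictions must be compared; a leaking element of either class then contradicts disjointness of classes. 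You instead use rigidity of ordered structures to turn Lemma \ref{lem:2equiv} into explicit value bookkeeping: on $[x,y]$ the map $d$ equals a constant $\delta$ on every pair except possibly $\{x,y\}$ (value $\delta_0$), equivalence of $u<v$ inside $[x,y]$ reduces to $[u,v]$ satisfying (\ref{eq:interval}), $\delta_0\neq\delta$ because otherwise $z\in A$, and each would-be leak collapses in one line to $\delta_0=\delta$ via a single well-chosen witness ($y$ or $z$ for $a\in A$ outside $[x,y]$; $x$ or $y$ for $b\in B$ outside $[x,y]$---the step $\overline{\delta}=\overline{\delta_0}\Rightarrow\delta=\delta_0$ is fine since $u\mapsto\overline{u}$ is an involution). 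I checked the four witness computations and the two automatic hypotheses in your interval criterion (1-monomorphy of $[u,v]$, and genericity of its $2$-subsets, which uses that $\{x,y\}\subseteq[u,v]$ forces $\{u,v\}=\{x,y\}$); they are all sound. What each approach buys: the paper's Claim \ref{sandwich} is a quotable positive statement, independent of any fixed witness, and reusable; your value analysis avoids the degenerate four-point case entirely and obtains $A=\{x,y\}$ and $B=\,]x,y[$ uniformly, without the separate $\vert A\vert\geq 3$ step, at the price of arguing only by contradiction and being tied to the fixed triple $x<z<y$.
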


\begin{proof}
 Let $A$ be an equivalence class. Lemma \ref{lem:2equiv} ensures that for every $x<y$ in $A$ the open interval $]x,y[$ is included into a $(\leq 2)$-equivalence class.  From this fact and Theorem \ref{thm:bin-ordered}, it follows that if $\vert A\vert \geq 3$, $A$ is an interval of $(V, \leq)$. Thus, if $A$ is not an interval of $(V, \leq)$ then $A$ is made of two elements $x$, $y$ with $x<y$  and there is some $z\not \in A$ such that $x<z<y$. By Lemma \ref{lem:2equiv}, the open interval $]x,y[$ is included into a $(\leq 2)$-equivalence class distinct from $A$. To conclude, we neeed to prove that $]x,y[$ is equal to this equivalence class. This amounts to prove the following claim:
\begin{claim}\label{sandwich}
Let $x<x'<y<y'$ with $x\simeq _{\leq 2,\mathcal{R}}y$ and $x'\simeq _{\leq 2,\mathcal{R}}y'$, then $x\simeq _{\leq 2,\mathcal{R}}y'$.
\end{claim}
\begin{proof} $a)$ We prove that the conclusion holds if  there  is an fifth  element $z$ in the interval $[x,y']$. For example, suppose $z\in ]x, y[$.  In this case,   from Lemma \ref{lem:2equiv},  we have $x'\simeq _{\leq 2,\mathcal{R}}z$. Now, if $x'<z<y$, we have $z\simeq _{\leq 2,\mathcal{R}} y$  from Lemma \ref{lem:2equiv}; if not, then $x<z<x'$;  since $z \simeq_{\leq 2,\mathcal{R}}y'$,
we  have  $x'\simeq_{\leq 2,\mathcal{R}}y$  from Lemma 4.2. In both cases we have $x'\simeq_{\leq 2, \equiv \mathcal {R}} y$. If $x'<z<y$ the conclusion is similar.

\noindent $b)$ From $a)$ we may suppose that $]x,y'[= \{x',y\}$. In this case, it suffices to prove that $[x,y']$ is a $(\leq 2)$-monomorphic  $\mathcal R$-interval. That is the restrictions of $\mathcal R$  on the six ordered pairs $x,x'$, $x,y$, $x, y'$, $x',y$, $x', y'$, $y,y'$ are isomorphic.
\noindent From the fact that $x\simeq_{\leq 2,\mathcal{R}}y$ the restrictions of $\mathcal R$ to $\{x ,x', y'\}$ and to $\{x', y, y'\}$ are isomorphic
Similarly, from the fact that $x'\simeq_{\leq 2,\mathcal{R}}y'$ the restrictions of $\mathcal R$ to $\{x,x',y\}$ and to $\{x, y, y'\}$ are isomorphic. The first isomorphism yields that the restrictions of $\mathcal R$  on $x,x'$ and $x', y$ are isomorphic, the same for $x,y'$ and $x',y'$ and also for $x'y'$ and $y,y'$. Hence, at least the restrictions of $\mathcal R$ to three of these pairs, namely $x,y'$, $x',y'$, $y,y'$ are isomorphic. The second isomorphism yields that the restrictions of $\mathcal R$  on $x,x'$ and $x,y$ are isomorphic, the same for $x,y$ and $x,y'$ and also for $x'y$ and $y,y'$. Also, three of these pairs, namely $x,x'$, $x,y$, $x,y'$ are isomorphic. The pair $x,y'$ belonging to these two sets, the restrictions of $\mathcal R$ to these five pairs are all isomorphic; since the restrictions of $\mathcal R$ to the remaining pairs $y,y'$ and to $x',y$ are isomorphic, the restrictions of $\mathcal R$ to all these pairs are isomorphic as claimed.  \end{proof}\\
With this, the proof of the theorem is complete.
\end{proof}

\begin{remark} Here is an  example for which none of the   $2$-element equivalence classes is an   interval. Let $V:= \NN\times \{0,1,2\}$. Order $V$ by  $(n,i) \leq (n',i')$ if either $n<n'$ or $n=n'$ and $i\leq i'$; the order type of $(V, \leq)$ is  the lexicographical product $3\cdot \omega$. Let    $\rho$ be the lexicographical product $C_3\cdot \omega$, that is $(n,i) \rho (n',i')$ if either $n<n'$ or $n=n'$ and  $i'=i+1$ (modulo $3$). Let  $\mathcal R:= (V, \leq, \rho)$.   Then,  the equivalence classes are the pairs  $\{(n, 0), (n, 2)\}$ and the singletons $\{(n, 1)\}$ for  $n\in \NN$.

\end{remark}

From Lemma \ref {lem:monomorphy} and Theorem  \ref {structureclasses}, Theorem \ref{thm:charcfinitemonomorph} restricted to binary ordered structures follows. Indeed, let $\mathcal R:=(V,\leq , (\rho _{i})_{i\in I})$ be such a structure. Then, each equivalence class   of $\simeq_{\mathcal R}$ which is not an interval of $(V, \leq)$ has just two elements; replacing each of these classes by  two blocks made of these two elements   will give a partition of $V$ into  monomorphic parts.  On each part, says $A$,  every local isomorphism of $(A, \leq_{\restriction A})$, extended by the identity outside, is a local isomorphism of $\mathcal R$, hence every local isomorphism of $(V, \leq)$ which preserves each interval of this new partition is a local isomorphism of $\mathcal R$. If $\simeq_{\mathcal R}$ has only finitely many classes, the new partition of $V$ has only finitely many blocks and the conclusion of  Theorem \ref{thm:charcfinitemonomorph} holds.

If the number of relations is finite, we have the following separation lemma;
\begin{lemma}\label{lem:separation2}
If an ordered binary structure $\mathcal{R}:=(V,\leq,\rho_1,\dots,\rho_k)$ of type $k$ has infinitely many equivalence classes then one of these
two cases occurs:
\begin{enumerate}
\item\label{item1} There is an infinite subset $A\subseteq V$ such that any two distinct
elements of $A$ are $0$-equivalent but not $1$-equivalent.
\item\label{item2} There are two disjoint infinite subsets $A_{1},A_{2}$ of $V$ such that
any two distinct elements of $A_{i},i\in \{1,2\}$ are $1$-equivalent but
not $2$-equivalent and
 for every $x,y\in A_{i}$, with $i\in \{1,2\}$ and  $x<y$,  we have  $[x,y]\cap A_{j}\neq \emptyset,$
for $i\neq j$.
\end{enumerate}
\end{lemma}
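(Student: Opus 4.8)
The plan is to exploit the identity $\simeq_{\mathcal R}=\simeq_{\leq 2,\mathcal R}=\simeq_{0,\mathcal R}\cap\simeq_{1,\mathcal R}\cap\simeq_{2,\mathcal R}$ coming from Theorem \ref{thm:bin-ordered}, together with the observation that $\simeq_{0,\mathcal R}$ has only finitely many classes: the restriction of $\mathcal R$ to a single point $x$ is entirely determined by the tuple $(\rho_1(x,x),\dots,\rho_k(x,x))\in\{0,1\}^k$, so $\simeq_{0,\mathcal R}$ has at most $2^k$ classes. Since $\simeq_{\leq 1,\mathcal R}=\simeq_{0,\mathcal R}\cap\simeq_{1,\mathcal R}$ refines $\simeq_{0,\mathcal R}$ while $\simeq_{\mathcal R}$ refines $\simeq_{\leq 1,\mathcal R}$, I would split the argument according to whether $\simeq_{\leq 1,\mathcal R}$ has infinitely many or finitely many classes, showing that the first alternative produces item (\ref{item1}) and the second produces item (\ref{item2}).

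First, suppose $\simeq_{\leq 1,\mathcal R}$ has infinitely many classes. As $\simeq_{0,\mathcal R}$ is finite, some $\simeq_{0,\mathcal R}$-class $W$ splits into infinitely many $\simeq_{\leq 1,\mathcal R}$-classes; on $W$ the relation $\simeq_{0,\mathcal R}$ is trivial, so these are infinitely many $\simeq_{1,\mathcal R}$-classes lying inside $W$. Choosing one representative from each gives an infinite $A\subseteq W$ whose elements are pairwise $0$-equivalent (they lie in $W$) but pairwise non-$1$-equivalent (they lie in distinct $\simeq_{1,\mathcal R}$-classes), which is exactly item (\ref{item1}).

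Now suppose $\simeq_{\leq 1,\mathcal R}$ has finitely many classes. Since $\simeq_{\mathcal R}$ refines it and has infinitely many classes, pigeonhole gives a single $\simeq_{\leq 1,\mathcal R}$-class $U$ containing infinitely many $\simeq_{\mathcal R}$-classes; moreover each $\simeq_{\mathcal R}$-class meeting $U$ is entirely contained in $U$, so any two elements of $U$ are $1$-equivalent, and two elements of $U$ are $2$-equivalent precisely when they are $\simeq_{\mathcal R}$-equivalent. By Theorem \ref{structureclasses} each of these infinitely many classes is either an interval of $(V,\leq)$ or a pair $\{x,y\}$ whose open interval $]x,y[$ is itself a class, hence infinitely many are of one common type. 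The key point I would establish is that \emph{classes of the same type have pairwise disjoint convex hulls}: two distinct interval-classes are disjoint intervals, hence order-separated; and two distinct pair-classes $\{x,y\}$, $\{x',y'\}$ cannot overlap, for if, say, $x<x'<y$ then $x'\in\,]x,y[$, forcing $x'$'s class $\{x',y'\}$ to coincide with the convex class $]x,y[$, which is impossible since a pair-class is not an interval. Fixing infinitely many classes $C_1,C_2,\dots$ of the common type with pairwise disjoint convex hulls, I would order them along $(V,\leq)$ (extracting, by the infinite Ramsey theorem, an increasing subfamily, after reversing the order if necessary), pick $p_i\in C_i$, and set $A_1:=\{p_1,p_3,p_5,\dots\}$ and $A_2:=\{p_2,p_4,\dots\}$. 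These lie in $U$, so their elements are pairwise $1$-equivalent; distinct $C_i$ give distinct $\simeq_{\mathcal R}$-classes, hence the elements of each $A_i$ are pairwise non-$2$-equivalent; and the alternation $p_1<p_2<p_3<\cdots$ places, between any two points of one $A_i$, a point of the other, which is the interleaving condition of item (\ref{item2}).

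The routine verifications (finiteness of $\simeq_{0,\mathcal R}$, the fact that representatives inherit (non-)equivalence from their classes, and the order-comparability of disjoint intervals) are straightforward. The main obstacle is the structural claim of the previous paragraph, that within $U$ the classes of a fixed type have pairwise disjoint convex hulls; the delicate instance is that of pair-classes, where the absence of nesting rests precisely on Theorem \ref{structureclasses} telling us that the interior $]x,y[$ of a pair-class is a \emph{single} convex class, leaving no room for a second pair-class to interleave with it. Corollary \ref{lem:formeclasseequiv} also enters here, since it forbids a full $(V,\leq)$-interval lying inside $U$ from splitting across two $\simeq_{\mathcal R}$-classes, and thereby guarantees that the infinitely many classes inside $U$ are genuinely spread out along the order rather than packed into a single interval.
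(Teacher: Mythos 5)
Your proof is correct, and it genuinely diverges from the paper's in the second case. The first case is the same on both sides: finitely many $0$-equivalence classes (at most $2^k$), pigeonhole, one representative per $1$-class inside a fixed $0$-class. For the second case the paper proceeds differently: inside a $1$-class $X_1$ containing infinitely many $(\leq 2)$-classes it extracts a monotone sequence $(a_i)$ of representatives, uses Corollary \ref{lem:formeclasseequiv} to find, between consecutive $a_i$'s, elements $c_i$ lying in a $1$-class other than $X_1$, and pigeonholes the $c_i$ into a common $1$-class $X'$, so that its $A_1$ and $A_2$ lie in two \emph{different} $1$-classes. You instead invoke the structure theorem (Theorem \ref{structureclasses}) to show that $\simeq_{\mathcal R}$-classes of a common type inside $U$ have pairwise disjoint convex hulls, and build \emph{both} $A_1$ and $A_2$ by alternating representatives of a single $\omega$- or $\omega^*$-ordered family inside the \emph{same} $(\leq 1)$-class. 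Each route buys something. Yours makes the failure of $2$-equivalence automatic for both sets, since every element represents a distinct $\simeq_{\mathcal R}=\simeq_{\leq 2}$-class; in the paper this point is left implicit for $A_2=\{c_i'\}$ — the $c_i'$ share a $1$-class, but non-$2$-equivalence of \emph{consecutive} $c_i'$ is not immediate from the construction and needs an extra thinning argument via Lemma \ref{lem:2equiv}, which the paper's write-up omits. Conversely, the paper's version is lighter (it needs only Corollary \ref{lem:formeclasseequiv}, not the full Theorem \ref{structureclasses}) and delivers the extra feature that $A_1$ and $A_2$ lie in distinct $1$-classes, which the statement does not claim but which Section 5 quietly exploits when it produces the witness $x_0$; your construction satisfies the lemma as stated, so this is a remark about downstream use rather than a flaw. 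Two minor points: your closing appeal to Corollary \ref{lem:formeclasseequiv} is superfluous, since disjointness of the hulls together with infinitude already yields an increasing or decreasing subfamily; and your disjoint-hull argument for pair-classes is sound precisely because you classify by ``interval'' versus ``non-interval pair'' — a two-element class can itself be an interval, and the contradiction $\{x',y'\}=\,]x,y[$ indeed only refutes the non-interval type, which is how you use it.
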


\begin{proof}

\begin{case} $\mathcal R$ has infinitely many classes of $1$-equivalence.
\end{case}Since $\mathcal R$ is made of finitely many relations, $V$ consists of  only finitely many classes of $0$-equivalence. Since $V$ is made of  infinitely many classes of $1$-equivalence,  one class of $0$-equivalence, say $X_{0}$, contains infinitely many classes of $1$-equivalence. Pick   one  element from every class of $1$-equivalence belonging to $X_{0}$ to form a subset $A$ of $V$. The set $A$ satisfies the Assertion (\ref{item1}) of Lemma \ref{lem:separation2}.

\begin{case} $\mathcal R$ has finitely many classes
    of $1$-equivalence.
    \end{case}
    According to Theorem \ref{thm:bin-ordered}, every $(\leq 2)$-equivalence class is an equivalence class, hence $\mathcal R$ has infinitely many $(\leq 2)$-equivalence classes.  Since  $V$  is made of finitely many classes of $1$-equivalence, some $1$-equivalence class, say $X_{1}$,  contains infinitely many $(\leq 2)$-equivalence classes. Pick an element   from each class of $(\leq 2)$- equivalence class included into $X_{1}$.  Let $A$ be the resulting set. Let $a, b\in A$ with $a<b$. Then  the interval  $[a,b]$ cannot be contained in $X_{1}$, otherwise by Item (1) of Lemma  \ref{lem:formeclasseequiv},  $a$ and $b$ would be $(\leq 2)$-equivalent. Hence, there exists $c\in[a,b]$ which belongs to an other class of $1$-equivalence $X'\neq X_{1}$. We can extract from $A$ a sequence $(a_i)_{i\geq 0}$ which is monotonic  w.r.t. $\leq$. With no loss of generality, we may suppose this sequence increasing. According to the above remark, for every $i\geq 0$, there exists $c_i\in [a_i,a_{i+1}]$ with $c_i$ belonging to a class of $1$-equivalence which is different from $X_{1}$. Since the number of $1$-equivalence classes is finite, we can then find an infinite subsequence $(c_i')_{i\geq 0}$ of $(c_i)_{i\geq 0}$ whose elements are in the same class of $1$-equivalence. Let then $(a_i')_{i\geq 0}$ be a subsequence of $(a_i)_{i\geq 0}$ such that $c_i'\in [a_i',a_{i+1}']$. Set $A_1=\{a_i', i\in\mathbb N\}$ and $A_2=\{c_i', i\in\mathbb N\}$.
    The sets $A_1$ and $A_2$ satisfy Assertion (\ref{item2}) of Lemma \ref{lem:separation2}.
\end{proof}

\section{Proof of the first part of Theorem \ref{thm:graph-ordonne}}\label{sec:proof of theorem graphesordonnes}

We give a proof of the first part of Theorem \ref{thm:graph-ordonne}. We prove that the collection $\mathfrak S_k$ of ordered binary structures of type $k$ which do not have a finite monomorphic decomposition, has a  finite basis  $\mathfrak A_k$.
The proof of the second part is given in  section \ref{sec:description graphs}.

The proof of Theorem \ref{thm:graph-ordonne} goes as follow. Let $\mathcal R:=(V,\leq,\rho_1,\dots,\rho_k)$ be an ordered binary structure which has infinitely many equivalence classes. 
According to Lemma \ref{lem:separation2}, we have two cases.

\subsection{Case 1.}\label{case:first}
\noindent $\mathcal R$ satisfies  Assertion $(1)$ of Lemma \ref{lem:separation2}.

In this case,  let  $f:\NN\longrightarrow V$  be a $1$-to-$1$ map  such that $f(\NN)=A$ where $A$ is the set given by Assertion $(1)$ of Lemma \ref{lem:separation2}. Since  $f(n)$ and $f(m)$ are  not $1$-equivalent  for every $n<m$, we may find  some element  $g(n,m)$ witnessing  this fact, meaning that the restrictions of $\mathcal R$ to $\{f(n),g(n,m)\}$ and $\{f(m),g(n,m)\}$ are not isomorphic.

\vspace{1mm}

Let $\Phi:=\{f,g\}$ and $\mathfrak{L}:=\left\langle \omega,\mathcal R,\Phi \right\rangle $, where $\omega$ is the chain made of $\NN$ and the natural order.

Ramsey's theorem in the version of Theorem \ref{thm:ramsey-invariant}, allows us to find an infinite subset $X\subseteq\NN$ such that $\mathfrak L\restriction_X$ is invariant. 
By relabeling $X$ with non-negative integers, we may suppose $X=\NN$ and hence that  $\mathfrak{L}$ is invariant.

\begin{claim}\label{order-graphclaim}
The maps $f$ and $g$ satisfy the following  properties:
\begin{enumerate}
\item\label{ass1} $f(n)\leq f(m)\Leftrightarrow f(n')\leq f(m'),~\forall n<m,~n'<m'$.
\item\label{ass2} $d(f(n), f(m))=d(f(n'), f(m')),~\forall n<m,~n'<m'$.
\item\label{ass3} $g(n,m)\leq g(k,l)\Leftrightarrow g(n',m')\leq g(k',l'),~\forall n<m\leq k<l,~n'<m'\leq k'<l'$.
\item\label{ass4} $d(g(n,m),g(k,l))= d(g(n',m'),g(k',l')),~\forall n<m\leq k<l,~n'<m'\leq k'<l'$.

\item\label{ass5} $g(n,m)\in [f(n),f(m)]\Leftrightarrow g(n',m')\in [f(n'),f(m')],~\forall n<m,~n'<m'$.
\item\label{ass6} $g(n,m)\leq f(k)$ for some integers $n<m<k \Leftrightarrow g(n,m)\leq f(l)$ for every $l>m$.
\item\label{ass7} $d(f(n),g(n,m))= d(f(k),g(k,l)),~\forall n<m,~k<l$.
\item\label{ass8} $d(g(n,m),f(k))= d(g(p,q),f(l)),~\forall n<m<k,~p<q<l$.
\item\label{ass9} If the restrictions $\mathcal R_{\restriction_{\{f(n),g(n,m)\}}}$ and  $\mathcal R_{\restriction_{\{f(k),g(n,m)\}}}$ are isomorphic for some integers $n<m<k$ then $\mathcal R_{\restriction_{\{f(n'),g(n',m')\}}}$ and  $\mathcal R_{\restriction_{\{f(k'),g(n',m')\}}}$ are isomorphic for every $n'<m'<k'$.
\item\label{ass10} $g(n,m)$ and $f(k)$ are different for every distinct integers $n<m$ and $k$.
\item\label{ass11} $g(n,m)\neq g(n',m')$ for every $n<m<n'<m'$.
\end{enumerate}
\end{claim}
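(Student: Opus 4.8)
The plan is to derive all eleven items from the single fact that, after the extraction of Theorem \ref{thm:ramsey-invariant}, the triple $\mathfrak L=\langle\omega,\mathcal R,\Phi\rangle$ with $\Phi=\{f,g\}$ is invariant. I would first restate invariance in the form actually used: since the local automorphisms of $\omega=(\NN,\leq)$ are exactly the order-isomorphisms between subsets of $\NN$, two index-configurations are carried onto one another by such a local automorphism precisely when they have the same order type. Hence Condition (\ref{eq:invariance}) says that the value of any relation of $\mathcal R$ — the order $\leq$ as well as each $\rho_i$ — on a tuple whose entries are $f$-images (one index each) and $g$-images (an increasing pair of indices each) depends only on the relative order in $\NN$ of all the indices involved (keeping track of which map consumes which index). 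Using (\ref{ass1}) I may moreover relabel so that $f$ is increasing.

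With this reformulation, items (\ref{ass1})--(\ref{ass9}) are obtained by inspection: for each one I identify the relation and the indices at stake and check that the displayed quantifier pins down their order type. Thus (\ref{ass1}) and (\ref{ass2}) compare two $f$-images at an increasing pair of indices; (\ref{ass7}) and (\ref{ass8}) compare an $f$-image with a $g$-image sitting in a fixed relative position, so the $d$-values are constant; (\ref{ass3}) and (\ref{ass4}) compare two $g$-images, the constraint $n<m\leq k<l$ fixing the order type of the four (or three) indices; (\ref{ass5}) writes $g(n,m)\in[f(n),f(m)]$ as the conjunction of the two order relations $f(n)\leq g(n,m)$ and $g(n,m)\leq f(m)$, each at a fixed order type; (\ref{ass6}) records that $\leq(g(n,m),f(l))$ depends only on the order type of $(n,m,l)$, which is the same for every $l>m$; and (\ref{ass9}) is the observation that the isomorphism type of a two-element restriction is a Boolean combination of relation values, hence is itself determined by the order type of the indices. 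None of these needs anything beyond invariance.

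The genuinely non-routine items are (\ref{ass10}) and (\ref{ass11}), because they assert inequalities of elements whereas invariance only constrains relation values. Here the extra ingredient is the defining property of $g$: the restrictions of $\mathcal R$ to $\{f(n),g(n,m)\}$ and to $\{f(m),g(n,m)\}$ are not isomorphic. For (\ref{ass10}) I would note that, since the elements of $A$ are pairwise $0$-equivalent (Assertion $(1)$ of Lemma \ref{lem:separation2}) and, by invariance, every increasing pair of indices yields the same $d$-value, all two-element restrictions of $\mathcal R$ to $A$ are mutually isomorphic; consequently no $f(k)$ can distinguish $f(n)$ from $f(m)$, so $g(n,m)$, which does distinguish them, cannot equal any $f(k)$. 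For (\ref{ass11}), suppose $g(n,m)=g(n',m')$ with $n<m<n'<m'$. Reading this common value as $g(n,m)$, invariance gives $\mathcal R_{\restriction\{f(n'),g(n,m)\}}\cong\mathcal R_{\restriction\{f(m'),g(n,m)\}}$, since $n'$ and $m'$ are both indices lying beyond $m$ and hence occupy the same order-type position relative to the pair $(n,m)$; but reading the same value as $g(n',m')$, its defining property says these two restrictions are not isomorphic — a contradiction.

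I expect items (\ref{ass10}) and (\ref{ass11}) to be the only real obstacle: the point to get right is that invariance by itself is silent about coincidences among the $f$- and $g$-values, and one must feed in the non-isomorphism built into the choice of the witnesses $g(n,m)$ in order to convert a hypothetical coincidence into a contradiction with that non-isomorphism. The rest is bookkeeping of order types.
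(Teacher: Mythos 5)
Your proposal is correct and follows essentially the same route as the paper: items (1)--(9) are dispatched by invariance of $\mathfrak L$ exactly as there, your argument for (10) (all two-element restrictions within $A=f(\NN)$ are mutually isomorphic, so the witness $g(n,m)$, which distinguishes $f(n)$ from $f(m)$, cannot equal any $f(k)$) is the paper's use of items (1) and (2), and your argument for (11) (the local automorphism of the chain fixing $n,m$ and sending $n'$ to $m'$, then reading the common value as $g(n',m')$ to contradict its defining non-isomorphism) is precisely the paper's. You also correctly isolate the one real point, namely that invariance alone says nothing about coincidences of values and must be combined with the witness property, which is exactly how the paper handles (10) and (11).
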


\begin{proofclaim}
The nine first items follow from invariance.  To prove Item (\ref{ass10}), suppose that there are integers $n,m,k$ with $n<m$ such that $g(n,m)=f(k)$. By construction of the functions $f$ and $g$, the sets $\{f(n),g(n,m)\}$ and $\{f(m),g(n,m)\}$ have two elements each, hence $k$ cannot be equal to $n$ or to $m$.  According to Item  (\ref{ass1}) and (\ref{ass2}), the restrictions of $\mathcal R$ to $\{f(n),f(k)\}$ and $\{f(m),f(k)\}$ are isomorphic;  thus, if   $g(n,m)=f(k)$ we get that the restrictions of $\mathcal R$ to $\{f(n), g(n,m)\}$ and to $\{f(m),g(n,m))\}$ are isomorphic, contradicting the choice of $g(n,m)$. For the proof of Item (\ref{ass11}),  suppose that there are integers $n<m<n'<m'$ such that $g(n,m)=g(n',m')$. The transformation fixing $n$, $m$ and sending $n'$ onto $m'$ is a local isomorphism of the chain, hence the restrictions of $\mathcal R$ to $\{f(n'),g(n,m)\}$ and  $\{f(m'),g(n,m)\}$ are isomorphic. Replacing $g(n,m)$ by $g(n',m')$, we get that the restrictions to $\{f(n'),g(n',m')\}$ and $\{f(m'),g(n',m')\}$ are isomorphic which is a contradiction with the choice of $g(n',m')$.
\end{proofclaim}

We define a map  $F:\NN\times\{0,1\}\longrightarrow V(\mathcal R)$ and an ordered binary structure of type $k$, $\mathcal R^{(1)}:=(V^{(1)},\leq^{(1)},\rho_1^{(1)},\dots,\rho_k^{(1)})$ with vertex set $V^{(1)}:=\NN\times\{0,1\}$ such that $F$ is an embedding from $\mathcal R^{(1)}$ into $\mathcal R$.

We define first $F$.  We set $F(n,1):=g(2n,2n+1)$ for $n\in\NN$. From Item (\ref{ass9}) of Claim \ref{order-graphclaim}, we have two cases:

Case $(a)$.  $\mathcal R\restriction_{\{f(n),g(n,m)\}}$ and  $\mathcal R\restriction_{\{f(k),g(n,m)\}}$ are isomorphic for some integers $n<m<k$; from invariance, this property holds for all $n<m<k$.

Case $(b)$.  Case $(a)$ does not hold.

 In  Case $(a)$, we set  $F(n,0):=f(2n+1)$ for every $n  \in \NN$. In Case $b$, we  set $F(n,0):=f(2n)$ for every $n\in \NN$.

The map $F$ is $1$-to-$1$ (by construction $f$ is $1$-to-$1$, hence the restriction of $F$ to $\NN\times \{0\}$ is $1$-to-$1$; the restriction of $F$ to $\NN\times \{1\}$  is also $1$-to-$1$ by $(11)$ of Claim \ref{order-graphclaim}; the images of $\NN\times \{0\}$ and  $\NN\times \{1\}$ are disjoint by $(10)$ of Claim \ref{order-graphclaim}.

Since $F$ is $1$-to-$1$, we take for $\mathcal R^{(1)}$ the inverse image of $\mathcal R$.

This amounts to
$$ x\leq^{(1)} y\Leftrightarrow F(x)\leq F(y) \; \text{and} \;  d^{(1)}(x,y)=d(F(x),F(y))$$
\noindent for every $x,y\in \NN\times\{0,1\}$, where $d^{(1)}(x,y):=(d_i^{(1)}(x,y))_{i=1,\dots,k}$ and $d_i^{(1)}(x,y):=(\rho_i^{(1)}(x,y),\rho_i^{(1)}(y,x))$ for every $1\leq i\leq k$.

\begin{claim}\label{claim:infinitelymanyclasses}
 For every $n<m\in \NN$, $(n,0)$ and $(m,0)$ are not $1$-equivalent, hence $\mathcal R^{(1)}$ has infinitely many equivalence classes.
\end{claim}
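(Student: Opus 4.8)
The plan is to produce, for each pair $n<m$ in $\NN$, a single point of $V^{(1)}=\NN\times\{0,1\}$ that separates $(n,0)$ from $(m,0)$ for $1$-equivalence. Since $F$ is an embedding of $\mathcal R^{(1)}$ into $\mathcal R$, showing $(n,0)\not\simeq_{1}(m,0)$ amounts to exhibiting a point $z$ for which the induced restrictions of $\mathcal R$ on $\{F(n,0),F(z)\}$ and $\{F(m,0),F(z)\}$ are not isomorphic. The witness I would use is $z:=(n,1)$, whose image is $F(n,1)=g(2n,2n+1)$ --- precisely the element chosen so that $\mathcal R_{\restriction\{f(2n),g(2n,2n+1)\}}$ and $\mathcal R_{\restriction\{f(2n+1),g(2n,2n+1)\}}$ are not isomorphic.

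First I would reduce, through the embedding, to comparing $\mathcal R_{\restriction\{F(n,0),g(2n,2n+1)\}}$ with $\mathcal R_{\restriction\{F(m,0),g(2n,2n+1)\}}$, and then split along the two cases that define $F(\cdot,0)$. In Case $(b)$, where $F(n,0)=f(2n)$ and $F(m,0)=f(2m)$, the conclusion is immediate: Case $(b)$ asserts that $\mathcal R_{\restriction\{f(p),g(p,q)\}}$ and $\mathcal R_{\restriction\{f(k),g(p,q)\}}$ are not isomorphic for all $p<q<k$, and applying this with $p=2n$, $q=2n+1$, $k=2m$ (valid since $n<m$ forces $2m>2n+1$) yields the non-isomorphism at once.

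In Case $(a)$, where $F(n,0)=f(2n+1)$ and $F(m,0)=f(2m+1)$, I would chain two facts about the same witness. By the defining property of $g$, the restrictions $\mathcal R_{\restriction\{f(2n),g(2n,2n+1)\}}$ and $\mathcal R_{\restriction\{f(2n+1),g(2n,2n+1)\}}$ are not isomorphic; by invariance in the form of Case $(a)$, applied with $p=2n$, $q=2n+1$, $k=2m+1$, the restrictions $\mathcal R_{\restriction\{f(2n),g(2n,2n+1)\}}$ and $\mathcal R_{\restriction\{f(2m+1),g(2n,2n+1)\}}$ are isomorphic. Combining the two shows that $\mathcal R_{\restriction\{F(m,0),g(2n,2n+1)\}}$ and $\mathcal R_{\restriction\{F(n,0),g(2n,2n+1)\}}$ are not isomorphic. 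Thus in both cases $(n,1)$ witnesses $(n,0)\not\simeq_{1}(m,0)$.

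For the final assertion, since $\simeq_{\mathcal R^{(1)}}$ is contained in $\simeq_{1,\mathcal R^{(1)}}$, the pairwise non-$1$-equivalent points $(n,0)$, $n\in\NN$, belong to pairwise distinct classes of $\simeq_{\mathcal R^{(1)}}$, so $\mathcal R^{(1)}$ has infinitely many equivalence classes. I expect the only delicate point to be the index bookkeeping in Case $(a)$: checking that the case distinction in the definition of $F(\cdot,0)$ is exactly what makes the single witness $g(2n,2n+1)$ effective uniformly in both cases, and verifying each time that the inequality $k>q$ needed to invoke the invariance statement indeed holds.
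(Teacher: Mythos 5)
Your proof is correct and follows essentially the same route as the paper's: the same witness $(n,1)$ with image $g(2n,2n+1)$, the same reduction through the embedding $F$, and the same case split combining the defining property of $g$ with the invariance dichotomy of Case $(a)$/Case $(b)$. The only (harmless) difference is that in Case $(b)$ you invoke the failure of Case $(a)$ directly with $k=2m$, whereas the paper reaches the same non-isomorphism slightly less directly via $f(2m+1)$ and invariance.
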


\begin{proofclaim}  

It suffices to prove that:
\begin{equation}\label{eq2}
\mathcal R^{(1)}\restriction_{\{(n,0),(n,1)\}} \text{ and }\mathcal R^{(1)}\restriction_{\{(m,0),(n,1)\}} \text{ are not isomorphic}
\end{equation}

By definition, $\mathcal R_{\restriction \{f(2n), g(2n, 2n+1)\}}$ and $\mathcal R_{\restriction \{f(2n+1), g(2n, 2n+1)\}}$ are not isomorphic.
In Case $(a)$, $\mathcal R\restriction_{\{f(2n),g(2n,2n+1)\}}$ and  $\mathcal R\restriction_{\{f(2m+1),g(2n,2n+1)\}}$ are isomorphic, hence  $\mathcal R\restriction_{\{f(2n+1),g(2n,2n+1)\}}$ and  $\mathcal R\restriction_{\{f(2m+1),g(2n,2n+1)\}}$ are not isomorphic; since $F(n,0):= f(2n+1), F(m,0)= f(2m+1)$ and $F(n,1)= g(2n, 2n+1)$  that means that  $\mathcal R^{(1)}\restriction_{\{(n,0),(n,1)\}}$   and $  \mathcal R^{(1)}\restriction_{\{(m,0),(n,1)\}}$  are not isomorphic, proving that (\ref{eq2}) holds. In Case $(b)$, $\mathcal R_{\restriction \{f(2n), g(2n, 2n+1)\}}$ and $\mathcal R_{\restriction \{f(2m+1), g(2n, 2n+1)\}}$ are  not  isomorphic. By invariance, $\mathcal R_{\restriction \{f(2m), g(2n, 2n+1)\}}$ and $\mathcal R_{\restriction \{f(2m+1), g(2n, 2n+1)\}}$ are isomorphic, hence\\ $\mathcal R\restriction_{\{f(2n),g(2n,2n+1)\}}$ and  $\mathcal R\restriction_{\{f(2m),g(2n,2n+1)\}}$ are not isomorphic; since $F(n,0):= f(2n), F(m,0)= f(2m)$ and $F(n,1)= g(2n, 2n+1)$  that means that  $\mathcal R^{(1)}\restriction_{\{(n,0),(n,1)\}}$   and $  \mathcal R^{(1)}\restriction_{\{(m,0),(n,1)\}}$  are not isomorphic, proving that (\ref{eq2}) holds.
\end{proofclaim}

\begin{claim} \label{claim:finiteness}
The set $\mathfrak A_k^1$ of  ordered binary structures $\mathcal R^{(1)}$ obtained by this process is finite.\end{claim}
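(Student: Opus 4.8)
The plan is to show that each structure $\mathcal R^{(1)}$ produced by the construction is determined, on its fixed domain $\NN\times\{0,1\}$, by a bounded list of combinatorial data, each entry ranging over a finite set; the finiteness of $\mathfrak A_k^1$ is then immediate. Recall that $\mathcal R^{(1)}$ is the inverse image of $\mathcal R$ under $F$, so that for all $(n,i),(m,j)\in\NN\times\{0,1\}$ one has $(n,i)\leq^{(1)}(m,j)\Leftrightarrow F(n,i)\leq F(m,j)$ and $d^{(1)}((n,i),(m,j))=d(F(n,i),F(m,j))$. Thus $\mathcal R^{(1)}$ is entirely encoded by these two quantities, and the whole point is to see that they take only finitely many values.

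The key step is to check that both $(n,i)\leq^{(1)}(m,j)$ and $d^{(1)}((n,i),(m,j))$ depend only on the pair $(i,j)\in\{0,1\}^2$ and on the comparison of the indices $n$ and $m$ (i.e.\ on which of $n<m$, $n=m$, $n>m$ holds). This is exactly what invariance of $\mathfrak L$, recorded in Claim \ref{order-graphclaim}, provides: each of $F(n,0)$ and $F(n,1)$ is $f$ or $g$ evaluated at arguments drawn from $\{2n,2n+1\}$, and by invariance any relation $d$ between such values is a function of the relative order, on $\omega$, of the integer arguments involved. Using $d(y,x)=\overline{d(x,y)}$ to restrict to $n\leq m$, I would go through the finitely many shapes: the two homogeneous shapes $0$-$0$ and $1$-$1$ (constancy from Items (\ref{ass1})--(\ref{ass4})), the equal-index shape $(n,0),(n,1)$, and the mixed shapes $(n,0),(m,1)$ with $n<m$ and with $n>m$ (constancy from Items (\ref{ass5})--(\ref{ass8}) together with invariance). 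In each shape the value is constant and coincides with the value already realized inside the four-element set $\{0,1\}\times\{0,1\}$.

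The step I expect to be the main obstacle is verifying that the order type of the integer arguments is genuinely constant within each shape, since $F$ couples an $f$-value at $2n$ or $2n+1$ with a $g$-value at the consecutive pair $(2m,2m+1)$. The delicate case is the mixed shape $d(F(n,0),F(m,1))$: for $n<m$ the $f$-index lies strictly below both of $2m,2m+1$, while for $n>m$ it lies strictly above both, and, this being the crucial observation, it can never fall strictly between $2m$ and $2m+1$ because these are consecutive integers. Hence precisely two order types arise, both of which already occur in $\{0,1\}\times\{0,1\}$, namely for the pairs $\{(0,0),(1,1)\}$ and $\{(1,0),(0,1)\}$; invariance then forces the value on all of $\NN\times\{0,1\}$ to agree with one of these. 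Granting this, the entire structure $\mathcal R^{(1)}$ is determined by the finite list consisting of the Case $(a)$/$(b)$ label, the order relation in each shape (a binary choice), and the value $d^{(1)}$ in each shape (an element of the finite set $(\{0,1\}^2)^{k}$). Since there are only finitely many such lists, only finitely many structures $\mathcal R^{(1)}$ occur, so $\mathfrak A_k^1$ is finite.
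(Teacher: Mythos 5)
Your proof is correct and is essentially the paper's own argument: the paper likewise deduces from the invariance recorded in Claim \ref{order-graphclaim} that $\mathcal R^{(1)}$ is entirely determined by its values on pairs drawn from the four vertices $(0,0)$, $(0,1)$, $(1,0)$, $(1,1)$ (equivalently, by the five shapes you enumerate, all realized inside that set), whence finiteness is immediate. Your consecutiveness observation for the mixed shape --- that the $f$-index can never fall strictly between the consecutive $g$-indices $2m$ and $2m+1$, so only two order types occur --- is precisely the detail the paper leaves implicit when it asserts that the values on the remaining pairs are deduced via local isomorphisms of $(\NN,\leq)$.
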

\begin{proofclaim}  According to Claim \ref{order-graphclaim},  $\mathcal R^{(1)}$ is entirely defined by its values on the pairs $((i,j), (i',j'))$ of vertices taken among the four vertices $(0,0)$, $(0,1)$, $(1,0)$ and $(1,1)$;  the values on the other pairs  will be deduced by taking local isomorphisms of $C:=(\NN,\leq^{(1)})$ and using Claim \ref{order-graphclaim}.
\end{proofclaim}	

\medskip

Let us give a hint about the form of the structures which arise
 (the full description in the case of a single binary relation  is given in Section \ref{sec:description graphs}).

 Due to its invariance, the map $d^{(1)}$ is determined by its values on the five ordered pairs
$((0,0), (1,0))$, $((0,1), (1,1))$, $((0,0), (0,1))$, $((0,0),(1,1))$, $((0,1), (1,0))$. The only requirement for  those pairs, due to Condition  \eqref{eq2},  is that   \begin{equation}
d^{(1)}((0,0),(0,1))\neq d^{(1)}((0,1),(1,0)). \label{eq:3}
\end{equation}

On each ordered pair, $d^{(1)}$ can take $4^{k}$ values. On five pairs, this gives $4^{5k}$ possibilities, but $4^{4k}$ are forbidden (those for which $d^{(1)}$ takes the same values on the pairs $((0,0),(0,1))$ and $((0,1),(1,0))$). This gives $4^{4k} .(4^k -1)$ possibilities.
In fact, some of the resulting structures embed into some others. Thus the number of non equimorphic structures is a bit less. 

 \subsection{Case 2.}
$\mathcal R$ satisfies the Assertion $(2)$ of Lemma \ref{lem:separation2}.

Since $A_1$ is infinite,  there is an countable  sequence $(x_n)_{n\in \NN}$ of  elements of $A_1$ which is either increasing or decreasing. Let $(y_n)_{n\in \NN}$ such that $y_n\in I_{\leq} (x_n, x_n+1) \cap A_{2}$.
Set $A'_{0}:= \{x_n: n\in \NN\}$, $A'_{1}:= \{y_n: n\in \NN\}$.

 Then $(A'_1\cup A'_2,\leq_{\restriction_{A'_1\cup A'_2}})$ is ordered as $\omega$ or $\omega^*$. Each of the sets $A'_1$ and $A'_2$ is contained into  a class of $1$-equivalence.  Conditions (\ref{equ:first equality}) and  (\ref{equ:second equality})  of Fact \ref{fact:1-equiv-binaire} are satisfied for every $x,y\in A_i$ and $z\in A_j$, $j\neq i$. We have then two situations:

\vspace{1mm}

\textbf{First situation.} There exists $x_0\in V\setminus (A'_1\cup A'_2)$ which witnesses the fact that $A'_1$ and $A'_2$ are contained in two different classes of $1$-equivalence. As $A'_1\cup A'_2$ is ordered as $\omega$ or $\omega^{\star}$, we may suppose that we have either $x_0\leq a$ or $x_0\geq a$, for every $a\in A'_1\cup A'_2$, because otherwise, we can find some cofinite subset of $A'_1\cup A'_2$ for which we have this condition.
\smallskip

    We may then find maps
    $f',g',g'':\NN\longrightarrow V$ such that, $f'(\NN)=A'_1$, $g'(\NN)=A'_2$,  $g''(\NN):=\{x_0\}$,  $g'(n)\in I_{\leq}(f'(n),f'(n+1)),~\forall n\in\NN$ and the restrictions of $\mathcal R$ to $\{ f'(n), g'(n), g''(0)\}$ and $\{ f'(m), g'(n), g''(0)\}$ are not isomorphic for every $n,m\in\NN$ (in fact the restrictions of $\mathcal R$ to $\{ f'(n), g''(0)\}$ and $\{g'(m), g''(0)\}$ are not isomorphic for every $n,m\in\NN$.
\medskip

As in Case (\ref{case:first}),
 we define  a map  $F':\{a\}\cup(\NN\times\{0,1\})\longrightarrow V(\mathcal R)$, with $a\notin\NN\times\{0,1\}$,  such that $F'(a):=g''(0)$, $F'(n,0):=f'(n)$ and $F'(n,1):=g'(n)$.

The map $F'$ being $1$-to-$1$, we may  define   an ordered binary structure of type $k$, $\mathcal R^{(2)}:=(V^{(2)},\leq^{(2)},\rho_1^{(2)},\dots,\rho_k^{(2)})$ with vertex set $V^{(2)}:=\{a\}\cup (\NN\times\{0,1\})$ such that $F'$ is an embedding from $\mathcal R^{(2)}$ into $\mathcal R$.
This amounts to
$$ x\leq^{(2)} y\Leftrightarrow F(x)\leq F(y) \; \text{and} \;  d^{(2)}(x,y)=d(F(x),F(y))$$
\noindent for every $x,y\in \NN\times\{0,1\}$, where $d^{(2)}(x,y):=(d_i^{(2)}(x,y))_{i=1,\dots,k}$ and $d_i^{(2)}(x,y):=(\rho_i^{(2)}(x,y),\rho_i^{(2)}(y,x))$ for every $1\leq i\leq k$.
 
\medskip

By construction, $\mathcal R^{(2)}$ satisfies Condition (\eqref{eq3}) below, hence has infinitely many equivalence classes. 

\begin{equation}\label{eq3}
\forall n<m\in\NN,~\mathcal R^{(2)}\restriction_{\{a,(n,0),(n,1)\}} \text{ and }\mathcal R^{(2)}\restriction_{\{a,(n,1),(m,0)\}} \text{ are not isomorphic.}
\end{equation}

The ordered binary structure $\mathcal R^{(2)}$ satisfies Observation \ref{observation1} stated  below which follows directly from the fact that the elements of $A_i$ for  $i\in\{1,2\}$ are $1$-equivalent.

\begin{observation}\label{observation1}
\begin{enumerate}
\item $a\leq^{(2)} (n,i)\Leftrightarrow a\leq^{(2)} (m,i),~\forall n<m,~ i\in\{0,1\}$.
\item $a\leq^{(2)} (n,0)\Leftrightarrow a\leq^{(2)} (n,1),~\forall n\in\NN$.
\item $d^{(2)}(a,(n,i))=d^{(2)}(a,(m,i)), ~\forall n<m,~ i\in\{0,1\}$.

\item $d^{(2)}((n,i),(m,i))=d^{(2)}((n',i),(m',i)), \forall n<m,~n'<m',~ i\in\{0,1\}$.
\item $d^{(2)}((n,0),(n,1))=d^{(2)}((n,1),(m,0)), ~\forall n<m$.
\item $d^{(2)}((n,0),(n,1))=d^{(2)}((m,0),(m,1)), ~\forall n<m$.

\item $d^{(2)}((n,0),(m,1))=d^{(2)}((m,1),(m+1,0)), ~\forall n<m$.
\end{enumerate}
\end{observation}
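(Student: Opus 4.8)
The plan is to verify all seven assertions of Observation \ref{observation1} by pulling everything back to $\mathcal R$ through the embedding $F'$. Since $\mathcal R^{(2)}$ is by construction the inverse image of $\mathcal R$ under $F'$, we have $d^{(2)}(u,v)=d(F'(u),F'(v))$ and $u\leq^{(2)}v\Leftrightarrow F'(u)\leq F'(v)$, so each assertion about $\mathcal R^{(2)}$ translates into the corresponding statement about the images. To lighten notation I would write $a_n:=f'(n)\in A'_1$, $b_n:=g'(n)\in A'_2$ and $w:=g''(0)$, and (the opposite case being symmetric) assume the sequence is increasing, so that the relevant points lie on $C=(V,\leq)$ as
\[
a_0<b_0<a_1<b_1<a_2<b_2<\cdots ,
\]
since $b_n\in\,]a_n,a_{n+1}[$ by the choice of $g'$ together with $A'_1\cap A'_2=\emptyset$. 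The only two ingredients are: (i) all the $a_n$ lie in one class of $1$-equivalence and all the $b_n$ lie in one class of $1$-equivalence (Lemma~\ref{lem:separation2}, Assertion~(\ref{item2})); and (ii) by the hypothesis of the First situation $w$ lies strictly below every $a_n,b_n$, or strictly above every $a_n,b_n$. Throughout I would read $1$-equivalence through Fact~\ref{fact:1-equiv-binaire}: if $x<y$ are $1$-equivalent then $d(x,z)=d(z,y)$ for $z\in\,]x,y[$, and $d(x,z)=d(y,z)$ for $z\notin[x,y]$.

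Assertions (1) and (2) would follow at once from (ii): since $w$ sits on one fixed side of the whole set $\{a_n,b_n\}$, the truth values of $w\leq a_n$, $w\leq b_n$, $w\leq a_m$, $w\leq b_m$ all agree, which gives every displayed equivalence. Assertion (3) is the ``outside'' clause of Fact~\ref{fact:1-equiv-binaire}: for $n<m$ the points $a_n,a_m$ are $1$-equivalent and $w\notin[a_n,a_m]$ by (ii), whence $d(a_n,w)=d(a_m,w)$ and therefore $d(w,a_n)=d(w,a_m)$ after passing to $\overline{\,\cdot\,}$; the same works for $b_n,b_m$. Assertions (5) and (7) are the ``inside'' clause: for (5) one uses $b_n\in\,]a_n,a_m[$ together with $a_n\simeq_{1,\mathcal R}a_m$ to get $d(a_n,b_n)=d(b_n,a_m)$, and for (7) one uses $b_m\in\,]a_n,a_{m+1}[$ with $a_n\simeq_{1,\mathcal R}a_{m+1}$ to get $d(a_n,b_m)=d(b_m,a_{m+1})$.

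The two assertions carrying real content are (4) and (6), where a genuine constancy must be extracted from $1$-equivalence alone (no invariance having been produced in this case). For (4), applying the inside clause to a triple $a_p<a_q<a_r$ (with $a_p\simeq_{1,\mathcal R}a_r$, $z=a_q$) gives $d(a_p,a_q)=d(a_q,a_r)$; specialising to consecutive triples shows $d(a_n,a_{n+1})$ is independent of $n$, and then the triple $a_n<a_m<a_{m+1}$ forces $d(a_n,a_m)=d(a_m,a_{m+1})$, pinning every $d(a_n,a_m)$ to this common value; identically for the $b_n$. For (6) I would chain the two classes: the inside clause for $a_n\simeq_{1,\mathcal R}a_{n+1}$ with $z=b_n$ gives $d(a_n,b_n)=d(b_n,a_{n+1})$, and the inside clause for $b_n\simeq_{1,\mathcal R}b_{n+1}$ with $z=a_{n+1}$ gives $d(b_n,a_{n+1})=d(a_{n+1},b_{n+1})$; composing yields $d(a_n,b_n)=d(a_{n+1},b_{n+1})$, and induction gives the full statement.

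I expect (6) to be the only non-mechanical point, precisely because it is the one place where one must alternate between the $1$-equivalence of the $a$-class and that of the $b$-class rather than staying inside a single class; everything else is a direct reading of Fact~\ref{fact:1-equiv-binaire} off the interleaved chain displayed above.
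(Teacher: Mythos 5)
Your proof is correct and follows exactly the route the paper intends: the paper dismisses Observation \ref{observation1} as following ``directly from the fact that the elements of $A_i$ for $i\in\{1,2\}$ are $1$-equivalent'' (together with the stipulated position of $x_0$ on one side of $A'_1\cup A'_2$), and your verification via the inside/outside clauses of Fact \ref{fact:1-equiv-binaire} on the interleaved chain $a_0<b_0<a_1<b_1<\cdots$ is precisely that argument spelled out. In particular your treatments of items (4) and (6) — constancy of $d(a_n,a_m)$ via consecutive triples, and the alternation between the two $1$-equivalence classes — correctly supply the details the paper leaves to the reader.
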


With this we obtain a finite subset $\mathfrak B_k^1$ of ordered binary structures with the same vertex set $\NN\times\{0,1\}\cup \{a\}$. 
\medskip

\textbf{Second situation.} There is no vertex $x_0$ as above. Then, since the elements of $A'_i$, $i\in\{1,2\}$, are $1$-equivalent,  we can deduce from Fact \ref{fact:1-equiv-binaire} that 
two vertices $x,y$ of $A'_i$ are separated by two vertices $z,z'$ such that $z\in I_{\leq}(x,y)\cap A'_j$ with $j\neq i$ and $z'\notin I_{\leq}(x,y)$. In this case and by Lemma \ref{lem:separation2}, the relation between two elements of $A'_i$, for at least one $i=1,2$, is different from the relation between two elements $x,y$, with $x\in A'_1$ and $y\in A'_2$.
We can then define two maps $f_1, g_1:\NN\longrightarrow V(\mathcal R)$ such that, $f_1(\NN)=A'_1$, $g_1(\NN)=A'_2$, $g_1(n)\in I_{\leq}(f_1(n),f_1(n+1)),~\forall n\in\NN$.

\smallskip

Set $F'':\NN\times\{0,1\}\longrightarrow V(\mathcal R)$ such that $F''(n,0):=f(n)$ and $F''(n,1):=g(n)$.\\
We can define an ordered binary structure $\mathcal R^{(3)}:=(V^{(3)},\leq^{(3)},\rho_1^{(3)},\dots,\rho_k^{(3)})$ with vertex set $V^{(3)}=\NN\times\{0,1\}$ such that
$$ x\leq^{(3)} y\Leftrightarrow F''(x)\leq F''(y) \; \text{and}\;  d^{(3)}(x,y)=d(F''(x),F''(y))
$$
for every $x,y\in \NN\times\{0,1\}$, where $d^{(3)}(x,y):=(d_i^{(3)}(x,y))_{i=1,\dots,k}$ and $d_i^{(3)}(x,y):=(\rho_i^{(3)}(x,y),\rho_i^{(3)}(y,x))$ for every $1\leq i\leq k$.
 As we said before, by construction of $\mathcal R^{(3)}$, the order $\leq^{(3)}$ is isomorphic to $\omega$ or $\omega^*$ with $(n,1)\in I_{\leq^{(3)}}((n,0),(n+1,0))$ and
for every $n<m$,  we have either $$d^{(3)}((n,0),(m,0))\neq d^{(3)}((n,0),(m,1)),$$ or $$d^{(3)}((n,0),(m,0))\neq d^{(3)}((n,1),(m,1)),$$ or $$d^{(3)}((n,0),(m,1))\neq d^{(3)}((n,1),(m,1)).$$

Then, with the fact that $A'_1$ and $A'_2$ are, each one, included  into  a class of $1$-equivalence, $\mathcal R^{(3)}$ satisfies the following observation.

\begin{observation}\label{observation2}

\begin{enumerate}
\item $d^{(3)}((n,0),(n,1))=d^{(3)}((m,0),(m,1)), ~\forall n<m$.
\item $d^{(3)}((n,0),(n,1))=d^{(3)}((n,1),(m,0)), ~\forall n<m$.
\item $d^{(3)}((n,0),(m,1))=d^{(3)}((m,1),(m+1,0)), ~\forall n<m$.
\item $d^{(3)}((n,i),(m,i))=d^{(3)}((n',i),(m',i)), ~\forall n<m,~n'<m',~~i\in\{0,1\}$.
\end{enumerate}
\end{observation}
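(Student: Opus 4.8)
The plan is to read each of the four equalities of Observation~\ref{observation2} back through the defining relation $d^{(3)}(x,y)=d(F''(x),F''(y))$ and to derive it from the two characterizations of $1$-equivalence in Fact~\ref{fact:1-equiv-binaire}, exploiting that $A'_1$ and $A'_2$ are each contained in a single $1$-equivalence class. Writing $a_n:=f_1(n)\in A'_1$ and $b_n:=g_1(n)\in A'_2$, the construction produces the interleaved chain
\[
a_0<b_0<a_1<b_1<a_2<b_2<\cdots
\]
(we may assume the extracted sequence increasing, the $\omega^{\star}$ orientation being symmetric), with $(n,0)\mapsto a_n$ and $(n,1)\mapsto b_n$. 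Hence, for any $a_p<a_q$ in $A'_1$, the $1$-equivalence of $a_p$ and $a_q$ yields, via \eqref{equ:first equality} and \eqref{equ:second equality}, that $d(a_p,z)=d(z,a_q)$ for $z\in\,]a_p,a_q[\,$ and $d(a_p,z)=d(a_q,z)$ for $z\notin[a_p,a_q]$; the same holds for pairs drawn from $A'_2$. These two identities are the only tools needed.

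Items (1)--(3) are each a single application of \eqref{equ:first equality}. For Item (2), apply the $1$-equivalence of $a_n$ and $a_m$ to $z=b_n$, which lies in $\,]a_n,a_m[\,$ since $a_n<b_n<a_{n+1}\le a_m$; this gives $d(a_n,b_n)=d(b_n,a_m)$. For Item (1), apply the $1$-equivalence of $b_n$ and $b_m$ to $z=a_m\in\,]b_n,b_m[\,$, giving $d(b_n,a_m)=d(a_m,b_m)$, and combine with the previous line to obtain $d(a_n,b_n)=d(a_m,b_m)$. For Item (3), apply the $1$-equivalence of $a_n$ and $a_{m+1}$ to $z=b_m\in\,]a_n,a_{m+1}[\,$, which yields $d(a_n,b_m)=d(b_m,a_{m+1})$.

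Item (4) is the one place where a single appeal to Fact~\ref{fact:1-equiv-binaire} does not suffice, and I expect it to be the main obstacle: it asserts that $d$ takes one fixed value on every increasing pair of $A'_1$ (the case $i=0$) and one fixed value on every increasing pair of $A'_2$ (the case $i=1$). I would prove $i=0$ by a triple argument. Fix $a_p<a_q<a_r$ in $A'_1$. Applying \eqref{equ:first equality} to the pair $a_p,a_r$ at $z=a_q$ gives $d(a_p,a_q)=d(a_q,a_r)$, while applying \eqref{equ:second equality} to the pair $a_p,a_q$ at $z=a_r\notin[a_p,a_q]$ gives $d(a_p,a_r)=d(a_q,a_r)$; hence the three increasing pairs of $\{a_p,a_q,a_r\}$ all carry the same value of $d$. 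Since the increasing pairs inside any triple share a common $d$-value, and any two increasing pairs of $A'_1$ are joined by a finite chain of triples (consecutive triples overlapping in a pair), transitivity propagates this value to all increasing pairs of $A'_1$, which is Item (4) for $i=0$; the identical argument on $A'_2$ settles $i=1$, and the $\omega^{\star}$ orientation requires no new idea.
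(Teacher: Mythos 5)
Your proof is correct and follows exactly the route the paper intends: the paper states Observation~\ref{observation2} without detailed proof, asserting it follows from the fact that $A'_1$ and $A'_2$ are each contained in a single $1$-equivalence class, and your argument is precisely the instantiation of Fact~\ref{fact:1-equiv-binaire} (conditions \eqref{equ:first equality} and \eqref{equ:second equality}) that justifies this, including the correct interleaving $a_n<b_n<a_{n+1}$ coming from $g_1(n)\in I_{\leq}(f_1(n),f_1(n+1))$. The triple argument you supply for Item (4) is sound (and is the standard way to get constancy of $d$ on increasing pairs within a $1$-equivalence class), so there is nothing to correct.
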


It is then clear that $\mathcal R^{(3)}$ 
has infinitely many equivalence classes. By construction, the set
$\mathfrak B_k^2$ of ordered binary structure obtained in this case is finite.

First, we conclude  that $\mathfrak A_k^1\cup\mathfrak B_k^1\cup\mathfrak B_k^2$ is finite.
Hence, the set  $\mathfrak A_k$ of minimal ordered binary structures (w.r.t embeddability) of  $\mathfrak A_k^1\cup\mathfrak B_k^1\cup\mathfrak B_k^2$ is finite. Next, by construction, $\mathfrak A_k$ is a basis. With that,  the proof of the first part of Theorem \ref{thm:graph-ordonne} is complete.

\section{Description of the ordered directed graphs}\label{sec:description graphs}

 We give in this section the proof of the second part of Theorem \ref{thm:graph-ordonne}.

 Let $\mathfrak A_1$ be the set of ordered binary structures $\mathcal R:=(V, \leq, \rho)$ defined in the previous section and let $\check{\mathfrak A}_1$ be the subset made of  the ordered reflexive directed graphs $\mathcal G:=(V,\leq,\rho)$ of the set $\mathfrak A_1$. The members of $\check{\mathfrak A}_1$ are almost multichains on $F\cup(L\times K)$ such that $L:=\NN$, $\vert K\vert=2$ and $\vert F\vert\leq 1$. 
We prove that the set $\check{\mathfrak A}_1$ contains one thousand two hundred and forty two members, 
 such that $\vert\check{\mathfrak A}_1\cap\mathfrak A_1^1\vert=1122$, $\vert\check{\mathfrak A}_1\cap\mathfrak B_1^1\vert=48$ and $\vert\check{\mathfrak A}_1\cap\mathfrak B_1^2\vert=72$.

 According to the nature of these graphs due, in part to the nature of the order $\leq$, we classify these graphs into several classes which we describe below.

\smallskip

Denote by $\mathcal G_{\ell,k}^{(p)}:=(V_{\ell,k}^{(p)},\leq_{\ell,k}^{(p)},\rho_{\ell,k}^{(p)})$ the ordered directed graphs of $\check{\mathfrak A}_1$, where $p, \ell$ and $k$ are non-negative integers such that $1\leq p\leq 10$, $1\leq \ell\leq 6$. The set of vertices $V_{\ell,k}^{(p)}$ is either $\NN\times\{0,1\}$ (if $\mathcal G_{\ell,k}^{(p)}$ is in $\mathfrak A_1^1\cup \mathfrak B_1^2$) or $\NN\times\{0,1\}\cup\{a\}$ (if $\mathcal G_{\ell,k}^{(p)}$ is in $\mathfrak B_1^1$).

\smallskip

The ordered graphs with the same value of $p$ are said of \emph{class} $p$, their restrictions to $A:=\NN\times\{0\}$ are identical and their restrictions to $B:=\NN\times\{1\}$ also. If they have the same value of $\ell$, then the linear orders  $(V,\leq)$ have the same order-type, $\ell$ takes values from $1$ to $6$ if the linear order is isomorphic to respectively $\omega$, $\omega^*$, $\omega+\omega$, $\omega^*+\omega$, $\omega+\omega^*$, $\omega^*+\omega^*$. We do not consider the cases where the linear order $\leq$ is isomorphic to $\underline{2}^*.\omega$, or to $\underline{2}.\omega^*$ because all the ordered directed graphs  which are obtained in this case are isomorphic to some ones for which the order $\leq$ is isomorphic to $\omega$ or $\omega^{\star}$.
 The integer $k$ enumerates the graphs for all values of $p$ and $\ell$. Different classes have not necessarily the same cardinalities.

\smallskip

 For $p=1$ if $\ell=1,2$ we have $1\leq k\leq 18$ and if $3\leq \ell\leq 6$ we have  $1\leq k\leq 15$. For $p=2,3,4$, if $\ell=1,2$ we have $1\leq k\leq 21$ and if $3\leq \ell\leq 6$ we have  $1\leq k\leq 15$. 
 
 For $5\leq p\leq 10$, if $\ell=1,2$ we have  $1\leq k\leq 22$ and if $3\leq \ell\leq 6$ we have  $1\leq k\leq 24$. The total is one thousand two hundred and forty two as claimed.

 \vspace{1mm}

 In each class $p$, when $\ell=1$,  the linear order  $\leq_{\ell,k}^{(p)}$ is isomorphic to $\omega$. In this case we have,
    $(0,0)<_{\ell,k}^{(p)}(0,1)<_{\ell,k}^{(p)}(1,0)$  when the vertex set is $\NN\times\{0,1\}$,  and $a<_{\ell,k}^{(p)}(0,0)<_{\ell,k}^{(p)}(0,1)<_{\ell,k}^{(p)}(1,0)$  when this set is $\NN\times\{0,1\}\cup\{a\}$. The order is reversed when  $\ell=2$.
If $\ell\geq 3$, the vertex set is $\NN\times\{0,1\}$. All the ordered directed graphs given for $\ell\geq 3$ belong to $\mathfrak A_1^1$ and for $p\geq 5$ they all belong to $\mathfrak A_1^1\cup\mathfrak B_1^2$.

\vspace{1mm}

We will give a graphical representations for some classes. All these representations are done on the following six vertices $(0,0)$, $(1,0)$, $(2,0)$, $(0,1)$, $(1,1)$, $(2,1)$ for the graphs of $\mathfrak A_1^1\cup\mathfrak B_1^2$ and on the following seven vertices $a$, $(0,0)$, $(1,0)$, $(2,0)$, $(0,1)$, $(1,1)$, $(2,1)$ for those of $\mathfrak B_1^1$ (the loops are not shown). These representations are given for $\ell=1$ (the linear order $\leq_{\ell,k}^{(p)}$ is isomorphic to $\omega$).

\vspace{1mm}

Recall that a graph $G$ which is isomorphic to its dual is said \emph{self-dual}, and if
 $G$ is a directed graph, the \emph{symmetrized} of $G$ is the graph $G'$ obtained from $G$ by adding every edge $u:=(x,y)$ such that $u^{-1}:=(y,x)$ is an edge of $G$. Thus $G'$ 
 may be considered as an undirected graph.

 Let $\mathcal G:=(V,\leq,\rho)$ be an ordered reflexive directed graph.
 The subset $E$ of $V^2$ such that $(x,y)\in E$ if and only if $\rho(x,y)=1$ is the edge set of $\mathcal G$ and $G:=(V,E)$ is the \emph{directed graph associated to} the ordered directed graph $\mathcal G$. For $x,y\in V$,  set $d(x,y):=(\rho(x,y),\rho(y,x))$.

We are now ready to describe our ordered reflexive directed graphs of $\check{\mathfrak A}_1$ given in Theorem \ref{thm:graph-ordonne}. According to our notations, we have just to describe the associated directed graphs $G_{\ell,k}^{(p)}=(V_{\ell,k}^{(p)},E_{\ell,k}^{(p)})$.

\medskip

 For $n\in\NN$, set $a_n:=((n,0),(n,1))$.

 \textbf{\underline{Class $p=1$:}} The restrictions of $G_{\ell,k}^{(1)}$ to sets $A:=\NN\times\{0\}$ and $B:=\NN\times\{1\}$  are both antichains.

\vspace{1mm}

  \textbf{I)} If $\ell=1,2$ then $1\leq k\leq 18$. 
  
   The graphs $\mathcal G_{\ell,k}^{(1)}$ for $1\leq k\leq 9$ are in $\mathfrak A_1^1$, they are in $\mathfrak B_1^2$ for $10\leq k\leq 12$ and in $\mathfrak B_1^1$ for $13\leq k\leq 18$.

\vspace{1mm}

$\bullet$ For $1\leq k\leq 12$.
   A pair $(x,x')$ of vertices, where $x=(n,i), x'=(n',i')$, is

\begin{itemize}
\item[$\centerdot$] an edge of $G_{\ell,1}^{(1)}$ if $n=n'$ and $i<i'$;
\item[$\centerdot$] an edge of $G_{\ell,2}^{(1)}$ if $(x',x)$ is an edge of $G_{\ell,1}^{(1)}$. Thus $G_{\ell,2}^{(1)}$ is the dual of $G_{\ell,1}^{(1)}$;
\item[$\centerdot$] an edge of $G_{\ell,3}^{(1)}$ if $n=n'$ and $i\neq i'$. The graph $G_{\ell,3}^{(1)}$ is self-dual;
\item[$\centerdot$] an edge of $G_{\ell,4}^{(1)}$ if $n\leq n'$ and $i<i'$;
\item[$\centerdot$] an edge of $G_{\ell,5}^{(1)}$ if $(x',x)$ is an edge of $G_{\ell,4}^{(1)}$. Thus $G_{\ell,5}^{(1)}$ is the dual of $G_{\ell,4}^{(1)}$;
\item[$\centerdot$] an edge of $G_{\ell,6}^{(1)}$ if it is  either an edge of $G_{\ell,4}^{(1)}$ or an edge of $G_{\ell,5}^{(1)}$. Thus  $G_{\ell,6}^{(1)}$ is the symmetrized of $G_{\ell,4}^{(1)}$ (and of $G_{\ell,5}^{(1)}$), it is self-dual;
\item[$\centerdot$] an edge of $G_{\ell,7}^{(1)}$ if  $i<i'$. The graph $G_{\ell,7}^{(1)}$ is equimorphic to its dual. 
 \item[$\centerdot$] an edge of $G_{\ell,8}^{(1)}$ if either ($n\leq n'$ and $i<i'$) or ($n>n'$ and $i<i'$) or ($n<n'$ and $i>i'$). 
 
 \item[$\centerdot$] an edge of $G_{\ell,9}^{(1)}$ if $(x',x)$ is an edge of $G_{\ell,8}^{(1)}$. Thus  $G_{\ell,9}^{(1)}$ is the dual of $G_{\ell,8}^{(1)}$;
\item[$\centerdot$] an edge of $G_{\ell,10}^{(1)}$ if either ($n\leq n'$ and $i<i'$) or ($n<n'$ and $i>i'$);
\item[$\centerdot$] an edge of $G_{\ell,11}^{(1)}$ if $(x',x)$ is an edge of $G_{\ell,10}^{(1)}$. Thus $G_{\ell,11}^{(1)}$ is the dual of $G_{\ell,10}^{(1)}$;
\item[$\centerdot$] an edge of $G_{\ell,12}^{(1)}$ if  $i\neq i'$. The graph $G_{\ell,12}^{(1)}$ is the symmetrized of $G_{\ell,10}^{(1)}$ (and of $G_{\ell,11}^{(1)}$).
\end{itemize}

\vspace{1mm}

Denote by $\underline{2}$ the poset made of $2:=\{0,1\}$ ordered so that $0<1$. The poset $\underline{2}^*$ is its dual. Denote by $K_2$ the reflexive clique on two vertices and by $\Delta_{\NN}$ the antichain with $\NN$ as vertex set. With this notation, $G_{\ell,1}^{(1)}$ is isomorphic to $\underline{2}.\Delta_{\NN}$, the lexicographic product of $\underline{2}$ by $\Delta_{\NN}$ (that is the antichain $\Delta_{\NN}$ where every vertex is replaced by the chain $\underline{2}$). The graph $G_{\ell,2}^{(1)}$ is isomorphic to $\underline{2}^*.\Delta_{\NN}$, the graph $G_{\ell,3}^{(1)}$ is isomorphic to $K_2.\Delta_{\NN}$, the graph $G_{\ell,6}^{(1)}$ is the half complete bipartite graph of Schmerl- Trotter \cite{S-T} and the graph $G_{\ell,7}^{(1)}$ is isomorphic to the ordinal sum $\Delta_{\NN}+\Delta_{\NN}$.

\vspace{1mm}

$\bullet$  For $13\leq k\leq 18$, the vertex set of the graph is $\{a\}\cup A\cup B$. 

A pair $(x,x')$ of vertices is
\begin{itemize}
\item[$\centerdot$] an edge of $G_{\ell,13}^{(1)}$ if $x=a, x'=(n,1)$;
\item[$\centerdot$] an edge of $G_{\ell,14}^{(1)}$ if $(x',x)$ is an edge of $G_{\ell,13}^{(1)}$. Thus $G_{\ell,14}^{(1)}$ is the dual of $G_{\ell,13}^{(1)}$;
\item[$\centerdot$] an edge of $G_{\ell,15}^{(1)}$ if it is  either an edge of $G_{\ell,13}^{(1)}$ or an edge of $G_{\ell,14}^{(1)}$. Thus  $G_{\ell,15}^{(1)}$ is the symmetrized of $G_{\ell,13}^{(1)}$ (and also of $G_{\ell,14}^{(1)}$);
\item[$\centerdot$] an edge of $G_{\ell,16}^{(1)}$ if either $x=a$ and $x'=(n,0)$ or $x=(n,1)$ and $x'=a$; this graph is self-dual;
\item[$\centerdot$] an edge of $G_{\ell,17}^{(1)}$ if it is  either an edge of $G_{\ell,16}^{(1)}$ or an edge of $G_{\ell,13}^{(1)}$;
\item[$\centerdot$] an edge of $G_{\ell,18}^{(1)}$ if $(x',x)$ is an edge of $G_{\ell,17}^{(1)}$. Thus $G_{\ell,18}^{(1)}$ is the dual of $G_{\ell,17}^{(1)}$;
\end{itemize}

\vspace{1mm}

The graphical representations of these graphs are given in \figurename ~\ref{repre:graphe-classe1}.

\vspace{1mm}

\textbf{II)} If $3\leq \ell\leq 6$, we have the same examples for each value of $\ell$ and their number is $15$, according to the linear order $\leq_{\ell,k}^{(1)}$, the elements of $A$ are placed before those of $B$. In these cases, $\mathcal G_{\ell,k}^{(1)}\in\mathfrak A_1^1$ for every $1\leq k\leq 15$.

$\centerdot$ $G_{\ell,k}^{(1)}=G_{1,k}^{(1)}$ for every $1\leq k\leq 6$.

$\centerdot$ $G_{\ell,k}^{(1)}=G_{1,k+1}^{(1)}$ for every $7\leq k\leq 10$.

$\centerdot$ $G_{\ell,11}^{(1)}$ is obtained from $G_{1,7}^{(1)}$ by adding all edges $((n,1),(m,0))$ for $n\geq m$.

$\centerdot$ $G_{\ell,12}^{(1)}$ is obtained from $G_{1,10}^{(1)}$ by adding all edges $((n,1),(m,0))$ for $n\geq m$, the graph $G_{\ell,12}^{(1)}$ is the dual of $G_{\ell,11}^{(1)}$

$\centerdot$ $G_{\ell,13}^{(1)}$ is undirected. Its edge set is $\{\{(n,0),(n',1)\}; n\neq n'\}$.

$\centerdot$ $G_{\ell,14}^{(1)}$ is obtained from $G_{\ell,13}^{(1)}$ by adding all edges $a_n$ for $n\in\NN$.

$\centerdot$ $G_{\ell,15}^{(1)}$ is obtained from $G_{\ell,13}^{(1)}$ by adding all edges $a_n^{-1}$ for $n\in\NN$.

\vspace{2mm}

\begin{figure}[!hbp]
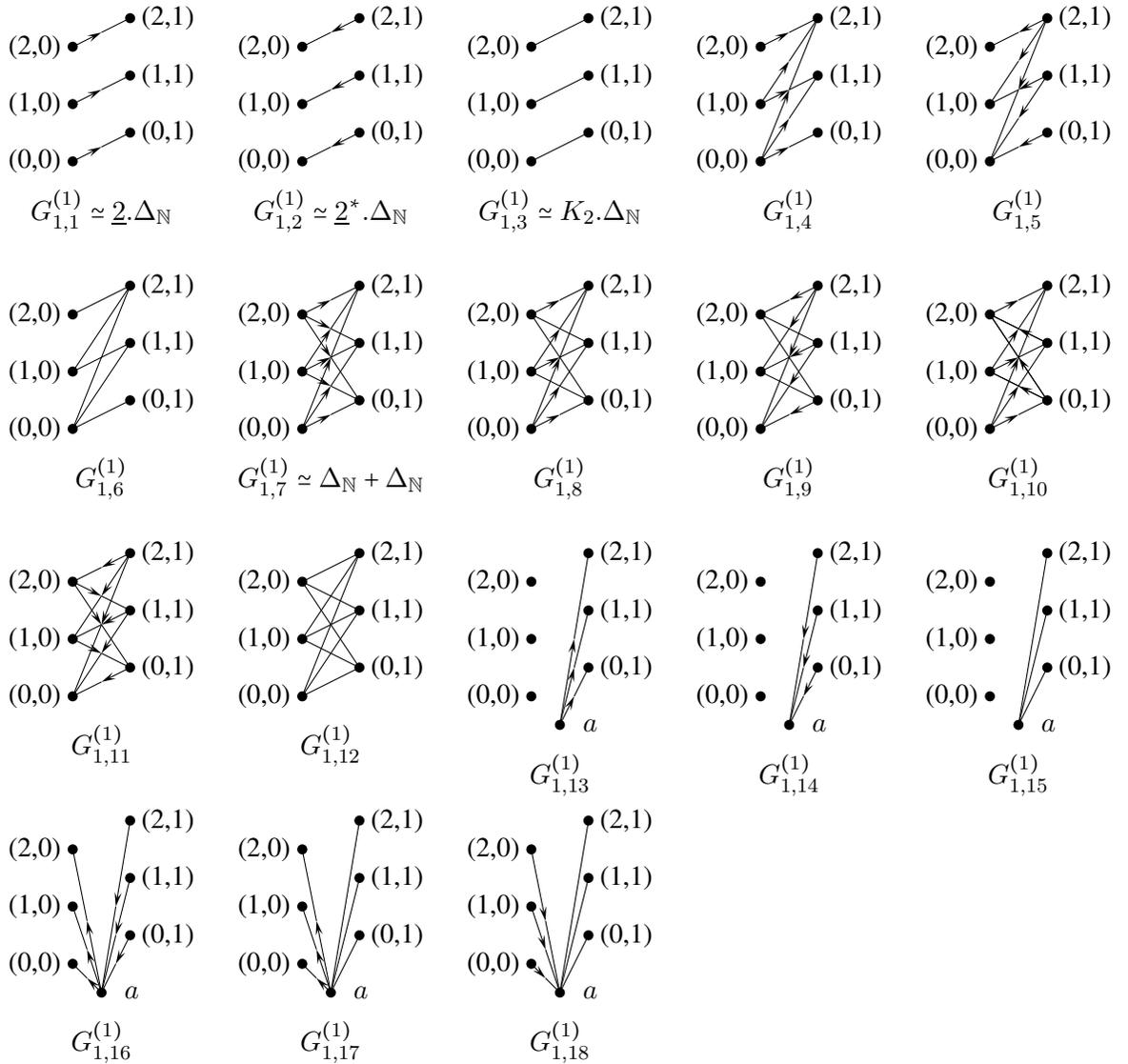

\begin{center}
\small
\begin{tabular}[l]{ccccccccc}
\input{graph1-class1}&&\input{graph2-class1}&&\input{graph3-class1}&&\input{graph4-class1}&&\input{graph5-class1}\\
\input{graph6-class1}&&\input{graph7-class1}&&\input{graph8-class1}&&\input{graph9-class1}&&\input{graph10-class1}\\
\input{graph11-class1}&&\input{graph12-class1}&&\input{graph13-class1}&&\input{graph14-class1}&&\input{graph15-class1}\\
\input{graph16-class1}&&\input{graph17-class1}&&\input{graph18-class1}&&&&\\
&&&&&&&&\\
\end{tabular}

\caption{Minimal graphs of class $p=1$ for $\ell=1$.
The graphs $\mathcal G_{\ell,k}^{(1)}$ are in $\mathfrak A_1^1$ for $1\leq k\leq 9$, in $\mathfrak B_1^2$ for $10\leq k\leq 12$ and in $\mathfrak B_1^1$ for $13\leq k\leq 18$.}
\label{repre:graphe-classe1}
\end{center}
\end{figure}

\vspace{2mm}

 \textbf{\underline{Class $p=2$}:} In this case, the restrictions of $G_{\ell,k}^{(2)}$ to $A$ and $B$ are both chains isomorphic to $\omega$.

\vspace{1mm}

 \textbf{I)} If $\ell=1,2$ we have  $1\leq k\leq 21$, the graphs $\mathcal G_{\ell,k}^{(2)}$ for $1\leq k\leq 12$ are in $\mathfrak A_1^1$, they are in $\mathfrak B_1^1$ for $13\leq k\leq 18$ and in $\mathfrak B_1^2$ for $19\leq k\leq 21$.

\vspace{1mm}

 $\bullet$  For $1\leq k\leq 9$ the graph $G_{\ell,k}^{(2)}$  coincides with $G_{\ell,k}^{(1)}$ on pairs of $A\times B$. Thus $G_{\ell,7}^{(2)}$ is a chain isomorphic to $\omega+\omega$ and the ordered directed graph $\mathcal G_{\ell,7}^{(2)}$ is one of the bichains given in \cite{mont-pou}.

\vspace{1mm}

 $\bullet$ For $10\leq k\leq 12$, the graph $G_{\ell,k}^{(2)}$ 
 coincides with $G_{\ell,10}^{(1)}$ on pairs of $A\times B$ with
 \begin{enumerate}
 \item suppressing edges $a_n, ~n\in\NN$ if $k=10$;  then $G_{\ell,10}^{(2)}$ is isomorphic to $\Delta_2.\omega$, the lexicographical product of the antichain on two vertices $\Delta_2$ with $\omega$.
 \item replacing $a_n$ by $a_n^{-1},~n\in\NN$ if $k=11$; then $G_{\ell,11}^{(2)}$ is isomorphic to $\underline{2}^*.\omega$, the ordered directed graph $\mathcal G_{\ell,11}^{(2)}$ is, in this case, one of the bichains given in \cite{mont-pou}.
 \item adding the edges $a_n^{-1},~n\in\NN$ if $k=12$; then $G_{\ell,12}^{(2)}$ is isomorphic to $K_{2}.\omega$.
 \end{enumerate}

\vspace{1mm}

 $\bullet$ For $13\leq k\leq 18$, the edge set on $(\{a\}\cup A)\times (\{a\}\cup B)$ of the graph $G_{\ell,k}^{(2)}$ is the union of edge sets of  $G_{\ell,10}^{(1)}$ and $G_{\ell,k}^{(1)}$.

 \vspace{1mm}

 $\bullet$ The edge sets of graphs $G_{\ell,19}^{(2)}$ and $G_{\ell,20}^{(2)}$ on $A\times B$ coincide with those of  $G_{\ell,11}^{(1)}$ and $G_{\ell,12}^{(1)}$ respectively.

 \vspace{1mm}

 $\bullet$ The edge set of graph $G_{\ell,21}^{(2)}$ on $A\times B$ is empty.
 Then $G_{\ell,21}^{(2)}$ is isomorphic to $\omega\oplus\omega$, the direct sum of two chains isomorphic to $\omega$.

\vspace{1mm}

The graphical representations of $G_{1,k}^{(2)}$, $1\leq k\leq 21$ are given in \figurename ~\ref{repre:graphe-classe2}.

 \vspace{1mm}

 \textbf{II)} If $3\leq \ell\leq 6$, we have the same examples for each value of $\ell$ and their number is $15$. In these cases, $\mathcal G_{\ell,k}^{(2)}\in\mathfrak A_1^1$ for every $1\leq k\leq 15$.

 $\centerdot$ $G_{\ell,k}^{(2)}=G_{1,k}^{(2)}$ for every $1\leq k\leq 6$ and for every $8\leq k\leq 9$.

 $\centerdot$ $G_{\ell,7}^{(2)}$ is a linear order isomorphic to $\omega$. The ordered directed graph $\mathcal G_{\ell,7}^{(2)}$ in this case is one of the bichains given in \cite{mont-pou}.


$\centerdot$ $G_{\ell,10}^{(2)}=G_{1,19}^{(2)}$.

 $\centerdot$ For $11\leq k\leq 15$, the graph $G_{\ell,k}^{(2)}$ coincides on $A\times B$ with $G_{\ell,k}^{(1)}$.

 \vspace{1mm}

\begin{figure}[!hbp]
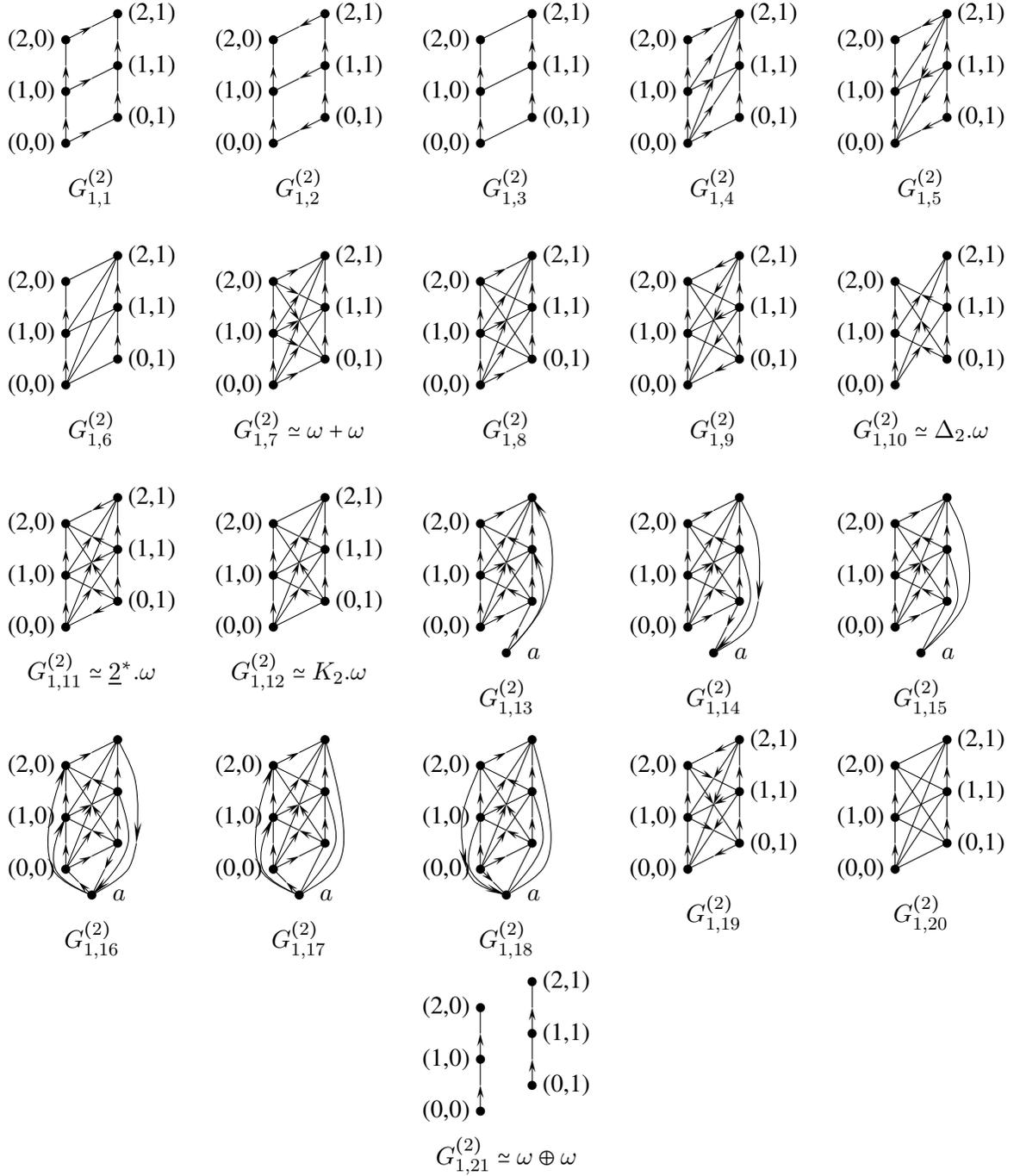

\begin{center}
\small
\begin{tabular}[l]{ccccccccc}
\input{graph1-class2}&&\input{graph2-class2}&&\input{graph3-class2}&&\input{graph4-class2}&&\input{graph5-class2}\\
\input{graph6-class2}&&\input{graph7-class2}&&\input{graph8-class2}&&\input{graph9-class2}&&\input{graph10-class2}\\
\input{graph11-class2}&&\input{graph12-class2}&&\input{graph13-class2}&&\input{graph14-class2}&&\input{graph15-class2}\\
\input{graph16-class2}&&\input{graph17-class2}&&\input{graph18-class2}&&\input{graph19-class2}&&\input{graph20-class2}\\
&&&&\input{graph21-class2}&&&&\\
\end{tabular}
\caption{Minimal graphs of class $p=2$ for $\ell=1$. The graphs $\mathcal G_{\ell,k}^{(2)}$ are in $\mathfrak A_1^1$ for $1\leq k\leq 12$, in $\mathfrak B_1^1$ for $13\leq k\leq 18$ and in $\mathfrak B_1^2$ for $19\leq k\leq 21$.}
\label{repre:graphe-classe2}
\end{center}
\end{figure}

\vspace{2mm}

  \textbf{\underline{Class $p=3$}:} In this case, the restrictions of $G_{\ell,k}^{(3)}$ to $A$ and $B$ are both chains isomorphic to $\omega^*$.

\vspace{1mm}

  \textbf{I)} If $\ell=1,2$ we have $1\leq k\leq 21$, the graphs $\mathcal G_{\ell,k}^{(3)}$ for $1\leq k\leq 12$ are in $\mathfrak A_1^1$, they are in $\mathfrak B_1^1$ for $13\leq k\leq 18$ and in $\mathfrak B_1^2$ for $19\leq k\leq 21$.

\vspace{1mm}

 $\bullet$ For $1\leq k\leq 9$, the graph $G_{\ell,k}^{(3)}$ coincides on $A\times B$ with $G_{\ell,k}^{(1)}$. Then $G_{\ell,7}^{(3)}$ is a chain isomorphic to $\omega^*+\omega^*$ and the ordered directed graph $\mathcal G_{\ell,7}^{(3)}$ is one of the bichains given in \cite{mont-pou}.

\vspace{1mm}

$\bullet$ For $10\leq k\leq 21$, the graph $G_{\ell,k}^{(3)}$ is the dual of $G_{\ell,k}^{(2)}$. Thus the graph $G_{\ell,10}^{(3)}$ is isomorphic to $\Delta_2.\omega^*$, the graph $G_{\ell,11}^{(3)}$ is isomorphic to $\underline{2}.\omega^*$ and hence the ordered directed graph $\mathcal G_{\ell,11}^{(3)}$ is one of the bichains given in \cite{mont-pou}. The graph $G_{\ell,12}^{(3)}$ is isomorphic to $K_{2}.\omega^*$ and the graph  $G_{\ell,21}^{(3)}$ is isomorphic to $\omega^*\oplus\omega^*$.

\vspace{1mm}

 \textbf{II)} If $3\leq \ell\leq 6$, we have the same examples for each value of $\ell$ and their number is $15$. In these cases, $\mathcal G_{\ell,k}^{(3)}\in\mathfrak A_1^1$ for every $1\leq k\leq 15$.

 $\centerdot$ $G_{\ell,k}^{(3)}=G_{1,k}^{(3)}$ for every $1\leq k\leq 6$.

 $\centerdot$ $G_{\ell,7}^{(3)}$ is a linear order isomorphic to $\omega^*$. The corresponding ordered directed graph is one of the bichains given in \cite{mont-pou}.

 $\centerdot$ $G_{\ell,k}^{(3)}=G_{1,k}^{(3)}$ for every $8\leq k\leq 9$.

$\centerdot$ $G_{\ell,10}^{(3)}=G_{1,19}^{(3)}$.

\vspace{1mm}

 $\bullet$ For $11\leq k\leq 15$, the graph $G_{\ell,k}^{(3)}$ coincides on $A\times B$ with $G_{\ell,k}^{(1)}$.

\vspace{2mm}

 \textbf{\underline{Class $p=4$}:} In this case, $A$ and $B$  are both reflexive cliques.

\vspace{1mm}

  \textbf{I)} If $\ell=1,2$ we have $1\leq k\leq 21$, the graphs for $1\leq k\leq 12$ are in $\mathfrak A_1^1$, they are in $\mathfrak B_1^1$ for $13\leq k\leq 18$ and in $\mathfrak B_1^2$ for $19\leq k\leq 21$.

\vspace{1mm}

 $\bullet$ For $1\leq k\leq 9$, the graph $G_{\ell,k}^{(4)}$ coincides with $G_{\ell,k}^{(1)}$ on pairs of $A\times B$. Then $G_{\ell,7}^{(4)}$ is isomorphic to $K_{\NN}+K_{\NN}$, the ordinal sum of two reflexive cliques with the same vertex set $\mathbb N$. 
 
$\centerdot$ The graph $G_{\ell,10}^{(4)}$ is the symmetrized of $G_{\ell,10}^{(2)}$.

$\centerdot$ The graph $G_{\ell,11}^{(4)}$ (respectively $G_{\ell,12}^{(4)}$) is obtained from $G_{\ell,10}^{(4)}$ by adding edges $a_n, n\in\NN$ (respectively $a_n^{-1}, n\in\NN$). The graph $G_{\ell,12}^{(4)}$ is the dual of $G_{\ell,11}^{(4)}$.

\vspace{1mm}

$\bullet$ For $13\leq k\leq 18$, the graph $G_{\ell,k}^{(4)}$ is obtained from $G_{\ell,k}^{(2)}$ by taking its symmetrized on $A\cup B$, the remaining edges (ie, those for which one extremity is $a$) being the same as in $G_{\ell,k}^{(2)}$. 
\vspace{1mm}

$\centerdot$ The graph $G_{\ell,19}^{(4)}$ coincides with $G_{\ell,11}^{(1)}$ on pairs of $A\times B$.

\vspace{1mm}

$\centerdot$ The graph $G_{\ell,20}^{(4)}$ is the dual of $G_{\ell,19}^{(4)}$.

\vspace{1mm}

 $\centerdot$ The graph $G_{\ell,21}^{(4)}$ is the symmetrized of $G_{\ell,21}^{(2)}$, it is isomorphic to $K_{\NN}\oplus K_{\NN}$.

\vspace{1mm}




\textbf{II)} If $3\leq \ell\leq 6$, we have the same examples for each value of $\ell$ and their number is  $15$. In these cases, $\mathcal G_{\ell,k}^{(4)}\in\mathfrak A_1^1$ for every $1\leq k\leq 15$.

 $\centerdot$ $G_{\ell,k}^{(4)}=G_{1,k}^{(4)}$ for every $1\leq k\leq 6$.

 $\centerdot$ $G_{\ell,k}^{(4)}=G_{1,k+1}^{(4)}$ for every $7\leq k\leq 8$.

$\centerdot$ $G_{\ell,9}^{(4)}=G_{1,19}^{(4)}$.

$\centerdot$ $G_{\ell,10}^{(4)}=G_{1,20}^{(4)}$.

\vspace{1mm}

 $\bullet$ For $11\leq k\leq 15$, the graph $G_{\ell,k}^{(4)}$ coincides on $A\times B$ with $G_{\ell,k}^{(1)}$.

\vspace{2mm}

 \textbf{\underline{Class $p=5$}:}  In this case all the graphs have the same vertex set which is $A\cup B$ such that one of the restrictions to $A$ or $B$  is a chain isomorphic to $\omega$, the other being an antichain.

\vspace{1mm}

 \textbf{I)} If $\ell=1,2$ we have $1\leq k\leq 22$, in these cases $\mathcal G_{\ell,k}^{(5)}\in\mathfrak A_1^1$ for $1\leq k\leq 9$ and $13\leq k\leq 21$, the graph $\mathcal G_{\ell,k}^{(5)}\in\mathfrak B_1^2$ for $10\leq k\leq 12$ and $k=22$.

 $\bullet$ For $1\leq k\leq 12$, the graph $G_{\ell,k}^{(5)}$ is such that its restriction to $A$ is a chain, its restriction to $B$ is an antichain and the remaining edges being the same as in $G_{\ell,k}^{(1)}$.

\vspace{1mm}

 $\bullet$ For $13\leq k\leq 21$, the graph $G_{\ell,k}^{(5)}$ is such that its restriction to $B$ is a chain, to $A$ is an antichain, the remaining edges being the same as in $G_{\ell,k-12}^{(1)}$.

\vspace{1mm}

 $\bullet$ The graph $G_{\ell,22}^{(5)}$ is such that its restriction to $A$ is ordered linearly as $\omega$ and its restriction to $B$ is an antichain, there are no other edges. Thus $G_{\ell,22}^{(5)}$ is isomorphic to $\omega\oplus \Delta_{\NN}$.

\vspace{1mm}

The graphical representations of $G_{1,k}^{(5)},~1\leq k\leq 22$ are given in \figurename ~\ref{repre:graphe-classe5}.

\vspace{1mm}

 \textbf{II)} If $3\leq \ell\leq 6$, we have the same examples for each value of $\ell$ and their number is $24$. In these cases, $\mathcal G_{\ell,k}^{(5)}\in\mathfrak A_1^1$ for every $1\leq k\leq 24$.

 $\centerdot$ $G_{\ell,k}^{(5)}=G_{1,k}^{(5)}$ for every $1\leq k\leq 6$.

 $\centerdot$ $G_{\ell,k}^{(5)}=G_{1,k+1}^{(5)}$ for every $7\leq k\leq 10$.

  $\centerdot$ $G_{\ell,k}^{(5)}=G_{1,k+2}^{(5)}$ for every $11\leq k\leq 16$.

   $\centerdot$ $G_{\ell,k}^{(5)}=G_{1,k+3}^{(5)}$ for every $17\leq k\leq 18$.

 $\centerdot$ The graphs $G_{\ell,19}^{(5)}$, $G_{\ell,20}^{(5)}$ and $G_{\ell,21}^{(5)}$ coincide on $A\times B$ respectively with $G_{\ell,13}^{(1)}$, $G_{\ell,14}^{(1)}$ and $G_{\ell,15}^{(1)}$ such that the set $A$ is ordered as $\omega$ and the set $B$ is an antichain.

 $\centerdot$ The graphs $G_{\ell,22}^{(5)}$, $G_{\ell,23}^{(5)}$ and $G_{\ell,24}^{(5)}$ coincide on $A\times B$ respectively with $G_{\ell,13}^{(1)}$, $G_{\ell,14}^{(1)}$ and $G_{\ell,15}^{(1)}$ such that the set $B$ is ordered as $\omega$ and the set $A$ is an antichain.

 \vspace{1mm}

\begin{figure}[!hbp]
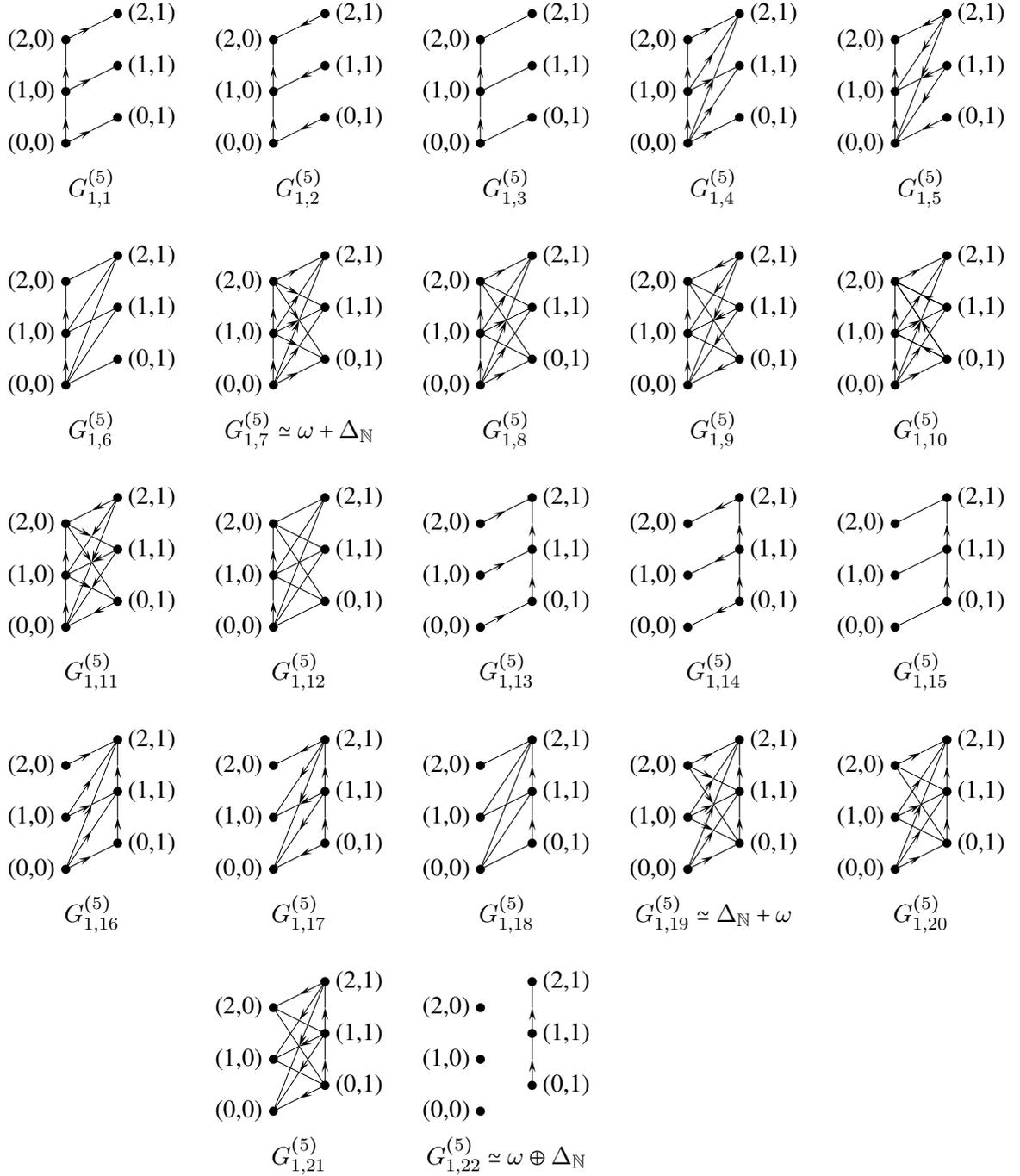

\begin{center}
\small
\begin{tabular}[l]{ccccccccc}
\input{graph1-class5}&&\input{graph2-class5}&&\input{graph3-class5}&&\input{graph4-class5}&&\input{graph5-class5}\\
\input{graph6-class5}&&\input{graph7-class5}&&\input{graph8-class5}&&\input{graph9-class5}&&\input{graph10-class5}\\
\input{graph11-class5}&&\input{graph12-class5}&&\input{graph13-class5}&&\input{graph14-class5}&&\input{graph15-class5}\\
\input{graph16-class5}&&\input{graph17-class5}&&\input{graph18-class5}&&\input{graph19-class5}&&\input{graph20-class5}\\
&&\input{graph21-class5}&&\input{graph22-class5}&&&&\\
\end{tabular}
\caption{The minimal graphs of class $p=5$ for $\ell=1$. The graph $\mathcal G_{1,k}^{(5)}\in\mathfrak A_1^1$ for $1\leq k\leq 9$ and $13\leq k\leq 21$, the graph $\mathcal G_{\ell,k}^{(5)}\in\mathfrak B_1^2$ for $10\leq k\leq 12$ and $k=22$.}
\label{repre:graphe-classe5}
\end{center}
\end{figure}
 \vspace{2mm}

\textbf{\underline{Classes $6\leq p\leq 10$}:} In these cases all the graphs have the same vertex set which is $A\cup B$.
On pairs of $A\times B$, the graphs are obtained in the same way as in case $p=5$, ie, the graph $G_{\ell,k}^{(p)}$ coincides with $G_{\ell,k}^{(5)}$, with the following differences. For $p=6$, the chain on $A$ or $B$ is replaced by a chain isomorphic to $\omega^*$. Hence, every graph in this class is the dual of one graph of the class $p=5$. For $p=7$, the chain on $A$ or $B$ is replaced by a reflexive clique. If $p=8$, the antichain on $A$ or $B$ is replaced by a chain isomorphic to $\omega^*$.
If $p=9$, the antichain on $A$ or $B$ is replaced by a reflexive clique. And in case $p=10$, the chain on $A$ or $B$ is replaced by a chain isomorphic to $\omega^*$ and the antichain is replaced by a reflexive clique.

In these cases, we also obtain bichains among those given in \cite{mont-pou}, they are the ordered directed graphs $\mathcal G_{\ell,k}^{(8)}$ with $\ell\in\{1,2\}$ and $k\in\{7,19\}$.

\medskip

We obtain, among all these ordered graphs, the twenty bichains $\mathcal B:=(V,\leq,\leq')$ of Monteil and Pouzet \cite{mont-pou} and \cite{Bou-Oud-Mon-Pou}.

\medskip

We have the following result.

\begin{lemma}
No graph of the set $\check{\mathfrak A}_1$
embeds into an other one.
\end{lemma}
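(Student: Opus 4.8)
The plan is to reduce the statement to a comparison of finitely many invariants and then to exploit the minimality built into the definition of $\mathfrak A_1$. Recall from Section~\ref{sec:proof of theorem graphesordonnes} that $\mathfrak A_1$ was defined to be the set of elements minimal for embeddability inside $\mathfrak A_1^1\cup\mathfrak B_1^1\cup\mathfrak B_1^2$, and that every member of $\check{\mathfrak A}_1$ has infinitely many equivalence classes, hence lies in $\mathfrak S_1$ and admits no finite monomorphic decomposition. The first observation is therefore that if $\mathcal G,\mathcal G'\in\check{\mathfrak A}_1$ are distinct and $\mathcal G\leq\mathcal G'$, then by minimality of $\mathcal G'$ we cannot have $\mathcal G<\mathcal G'$ strictly; hence $\mathcal G$ and $\mathcal G'$ would be equimorphic. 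Thus it suffices to prove that no two distinct members of $\check{\mathfrak A}_1$ are equimorphic.

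Second, I would use that every $\mathcal G\in\check{\mathfrak A}_1$ is almost multichainable on $F\cup(\NN\times\{0,1\})$ with $|F|\leq 1$, and is invariant under the maps $(u,\mathrm{Id})$ induced by the increasing injections of $(\NN,\leq)$ (Section~\ref{subsectionproduct} and Claim~\ref{order-graphclaim}). Consequently $\mathcal G$ is completely described, up to isomorphism, by a finite amount of data: the order type $\ell$ of $(V,\leq^{(p)}_{\ell,k})$, the isomorphism types of the two columns $A:=\NN\times\{0\}$ and $B:=\NN\times\{1\}$ --- each being, by Lemma~\ref{lem:monomorphy}, an antichain, an $\omega$- or $\omega^{\star}$-chain, or a reflexive clique, and thus governed by the class $p$ --- and the ``cross'' pattern between $A$, $B$ and the extra vertex $a$ when present, governed by $k$. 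The content of the lemma is then that these finite data are pairwise distinct over the explicit list of Section~\ref{sec:description graphs}.

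The key step is to show that each of these data is an invariant of equimorphism. An equimorphism between two invariant structures must carry an infinite column to an infinite subset on which the induced relation is again monomorphic of the same generic type, so the pair of column types (the value of $p$, including the distinction between $\mathfrak A_1^1$, $\mathfrak B_1^1$ and $\mathfrak B_1^2$ according to the presence of $a$) is preserved; and since both chains must embed one into the other while respecting this column structure, the order type $\ell$ is preserved as well. Once $p$ and $\ell$ are fixed, the generic value of $d$ on the five representative pairs (the inequality~\eqref{eq:3} and the relations recorded in Observations~\ref{observation1} and~\ref{observation2}) determines the cross-pattern, so $k$ is preserved too. Reading off the explicit descriptions and the figures, distinct triples $(p,\ell,k)$ give distinct data, whence non-equimorphic graphs.

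The main obstacle is the size and delicacy of the final verification. With $1242$ graphs spread over ten classes, one must rule out the ``accidental'' equimorphisms that invariance permits a priori: the many duality and symmetrization relations recorded in the figures (so that a graph and its dual are genuinely non-equimorphic), the possible exchange of the roles of $A$ and $B$, and the one-directional chain embeddings such as $\omega\hookrightarrow\omega+\omega$, which must be shown incompatible with preserving the column types. Handling these near-collisions case by case --- using that an equimorphism of infinite invariant structures is rigid on cofinal and coinitial segments --- is where the real work lies, but each case is a finite check against the tables of Section~\ref{sec:description graphs}.
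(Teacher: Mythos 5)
Your proposal is correct in outline but opens with a genuinely different move from the paper. The paper argues directly: given an embedding $f$ of $\mathcal G_{\ell,k}^{(p)}$ into $\mathcal G_{\ell',k'}^{(p')}$, it first forces $p=p'$ (the restrictions to the columns $A=\NN\times\{0\}$ and $B=\NN\times\{1\}$ on one side and to the cross pairs on the other are of different natures, so $f$ must send each column into a column), then $\ell=\ell'$ (the residual order-type collisions such as $\omega\hookrightarrow\omega+\omega$ are killed by checking that no map of the four vertices $(0,0),(1,0),(0,1),(1,1)$ preserves the relations), and finally $k=k'$ (by invariance an embedding exists iff the generating configuration on these four vertices --- five when $a$ is present --- embeds, and one checks it does not, this check also covering dual pairs and the possible exchange of $A$ and $B$ when $p\leq 4$). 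You instead invoke the minimality built into the definition of $\mathfrak A_1$ to reduce the lemma to non-equimorphy of distinct members, and then argue that $(p,\ell,k)$ are equimorphy invariants. Your route buys a cleaner treatment of $\ell$: under mutual embeddability the six order types $\omega$, $\omega^*$, $\omega+\omega$, $\omega^*+\omega$, $\omega+\omega^*$, $\omega^*+\omega^*$ are pairwise distinguishable (the one-way embedding $\omega\hookrightarrow\omega+\omega$ is harmless because the reverse fails already at the level of the orders), and strict one-way embeddings are excluded for free. It has two costs. First, a near-circularity: the lemma concerns the explicit list of $1242$ graphs, and the identification of that explicit list with the set of minimal elements $\mathfrak A_1$ is certified precisely by the pairwise non-embeddability being proven; the paper's direct argument does not presuppose this identification. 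Second, the decisive work is the same in both approaches --- the finite verifications that, with $p$ and $\ell$ fixed, distinct values of $k$ (including dual pairs and $A$/$B$ swaps) admit no embedding of the four- or five-vertex generating configurations --- and your proposal defers these checks (``each case is a finite check against the tables'') at exactly the point where the paper carries them out; so the reduction via minimality repackages, but does not shorten, the core case analysis.
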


\begin{proof}
Suppose that there is an embedding $f$ of $\mathcal G_{\ell,k}^{(p)}$ into $\mathcal G_{\ell',k'}^{(p')}$ for some values of $p$, $p'$, $\ell$, $\ell'$, $k$ and $k'$. According to the fact that the restrictions of each graph to $A$ and $B$ on one side and to $A\times B$ on the other do not have the same nature, $A$ is send by $f$ into $A$ or $B$ and $B$ is send to the other. Then we must have $p=p'$. Also, if $\ell\geq 3$, then $\ell'=\ell$ and if $\ell=1,2$, then $(V_{\ell,k}^{(p)}, \leq_{\ell,k}^{(p)})$ is embeddable into $(V_{\ell,k}^{(p)}, \leq_{\ell',k'}^{(p)})$ for some $\ell'\geq 3$ (eg. $\omega$ is embeddable into $\omega+\omega$, $\omega+\omega^*$ and $\omega^*+\omega$) but this embedding is not an embedding from $G_{\ell,k}^{(p)}$ into $G_{\ell',k'}^{(p)}$, it suffices to try to send the four vertices $\{(0,0),(1,0),(0,1),(1,1)\}$ of
$\mathcal G_{\ell,k}^{(p)}$ into $\mathcal G_{\ell',k'}^{(p)}$ preserving the relations. Then, necessarily, $\ell=\ell'$.
Now, if $k\neq k'$, then, if the restrictions of $\mathcal G_{\ell,k}^{(p)}$ to the sets $A$ and $B$ are not isomorphic (it is the case for $p\geq 5$), then the vertices of $A$ are send by $f$ into the set $A$ and those of $B$ are send into $B$. Since these graphs are invariant, it suffices to $f$ to be an embedding from the fourth vertices set $\{(0,0),(1,0),(0,1),(1,1)\}$, if  $\mathcal G_{\ell,k}^{(p)}\in\mathfrak A_1^1\cup\mathfrak B_1^2$ or from the set  $\{a,(0,0),(1,0),(0,1),(1,1)\}$, if $\mathcal G_{\ell,k}^{(p)}\in\mathfrak B_1^1$ such that $f(0,0)$ and $f(1,0)$ are in $A$ and $f(0,1)$ and $f(1,1)$ are in $B$ and fixing $a$ if any. We have no such embedding.
And if the restrictions of $\mathcal G_{\ell,k}^{(p)}$ to $A$ and $B$ are isomorphic (that is the case for $p\leq 4$) then the vertices of $A$ are send by $f$ to vertices of either $A$ or $B$. We can also remark that we can't find an embedding of
$\{(0,0),(1,0),(0,1),(1,1)\}$ or $\{a,(0,0),(1,0),(0,1),(1,1)\}$. Then $k=k'$.
\end{proof}

\smallskip

With this,  the proof of Theorem \ref{thm:graph-ordonne} is complete.

\section{Profiles of members of $\check{\mathfrak A}_1$ and a  proof of Proposition \ref{prop:profil-ordonne}}\label{sec:profile}

For all values of the integers $p,\ell$ and $k$, denote by $\varphi_{\ell,k}^{(p)}$ the profile of the ordered directed graph $\mathcal G_{\ell,k}^{(p)}$. We recall that the graph $G_{\ell,k}^{(p)}$ is the directed graph associated to the ordered graph $\mathcal G_{\ell,k}^{(p)}$.
The proof of  Proposition \ref{prop:profil-ordonne} follows from Lemmas \ref{lemm-profil1} to \ref{lemm-profil6}.

\begin{lemma}\label{lemm-profil1}
 The profile of the ordered graph $\mathcal G_{\ell,k}^{(p)}$ for $1\leq\ell\leq 2$ and ($p=1$ and $1\leq k\leq 3$) or ($2\leq p\leq 4$ and $10\leq k\leq 12$) grows as the Fibonacci sequence.
\end{lemma}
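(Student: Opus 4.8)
The plan is to treat all the listed ordered graphs simultaneously by exhibiting a common combinatorial form and reducing the profile to a count of compositions. First I would record that in each of these cases the vertex set $\NN\times\{0,1\}$ splits into the two-element \emph{levels} $L_n:=\{(n,0),(n,1)\}$, each of which is an interval of $(V,\leq_{\ell,k}^{(p)})$; this is where the hypothesis $\ell\in\{1,2\}$ is used, since then the order is $\omega$ or $\omega^*$ and its elements are grouped two by two. Setting $d(x,y):=(\rho(x,y),\rho(y,x))$ as in Section~4, I would check the two structural facts holding for every graph in the list, directly from the descriptions of classes $p=1,\dots,4$ in Section~\ref{sec:description graphs}: (i) the within-level value $w:=d((n,0),(n,1))$ is the same for all $n$ and equals a fixed element of $\{0,1\}^2$ prescribed by the internal type ($\Delta_2,\underline 2,\underline 2^*$ or $K_2$, by Lemma~\ref{lem:monomorphy}); and (ii) for $n<m$ the between-level value $b:=d((n,i),(m,j))$ depends on none of $n,m,i,j$ (it is the empty, forward-complete, backward-complete or symmetric-complete pattern according to $p=1,2,3,4$).

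The decisive point is a third fact: $w\neq b$ in every one of the twelve cases. I would verify it from the short table $b=(0,0),(1,0),(0,1),(1,1)$ for $p=1,2,3,4$ respectively, against the internal values $w\in\{(1,0),(0,1),(1,1)\}$ for $p=1$ (and the analogous three values for $p=2,3,4$); in each case the internal type has been chosen precisely so that $w\neq b$. Granting $w\neq b$, for any finite $S\subseteq V$ written in increasing order $v_1<\dots<v_n$ the relation ``$v_s,v_t$ lie in a common level'' is recoverable from $\mathcal R_{\restriction S}$: since a level has only two elements, two co-level vertices are consecutive in $S$, and conversely $d(v_s,v_{s+1})=w$ forces them co-level because a different-level consecutive pair has value $b\neq w$. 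Thus the co-level relation is a matching joining some adjacent pairs, and $\mathcal R_{\restriction S}$ determines, and is determined by, the sequence of block sizes, i.e. a composition of $n$ into parts $1$ and $2$: by (i) and (ii) the within-block relations all equal the fixed internal type and the between-block relations all equal $b$, so the composition reconstructs $\rho$ completely.

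It remains to see that every composition of $n$ into $1$'s and $2$'s is realized — assign a fresh level to each part, taking both points of $L_n$ for a part of size $2$ and one point for a part of size $1$, with the levels increasing — and that distinct compositions give non-isomorphic structures: the co-level matching is an isomorphism invariant, because an isomorphism of finite ordered structures is the unique monotone bijection and so preserves every value of $d$. Hence the profile $\varphi_{\ell,k}^{(p)}(n)$ equals the number of compositions of $n$ into parts $1$ and $2$, which satisfies $a_n=a_{n-1}+a_{n-2}$ with $a_0=a_1=1$, so $a_n=F_{n+1}$; this is the ordinary Fibonacci sequence $F_{-,2}$ of the introduction. The one genuinely error-prone step — and the only place where the three values of $k$ and four values of $p$ actually enter — is the verification of $w\neq b$ together with the homogeneity in (ii); everything else is uniform in $p,k$ and insensitive to whether the ambient order is $\omega$ or $\omega^*$, which is exactly why the statement covers $\ell\in\{1,2\}$ at once.
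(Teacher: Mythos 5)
Your proof is correct and follows essentially the same route as the paper's: the paper classifies an $r$-element restriction according to whether its top two elements form a co-level pair $\{(n,0),(n,1)\}$, obtaining the recursion $\varphi_{\ell,k}^{(p)}(r)=\varphi_{\ell,k}^{(p)}(r-1)+\varphi_{\ell,k}^{(p)}(r-2)$ with $\varphi_{\ell,k}^{(p)}(0)=\varphi_{\ell,k}^{(p)}(1)=1$, which is exactly your count of compositions of $r$ into parts $1$ and $2$. Your explicit checks that the constant within-level value $w$ differs from the constant between-level value $b$ in all twenty-four cases, and that the resulting co-level matching is an isomorphism invariant of finite ordered structures, merely make rigorous what the paper's appeal to invariance leaves implicit.
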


\begin{proof}  There are twenty four ordered graphs to consider. The proof given here take into account all these graphs. The reader can get some help by looking at the graphs $ G_{1,1}^{(1)}$, $ G_{1,2}^{(1)}$, $ G_{1,3}^{(1)}$ represented Figure \ref{repre:graphe-classe1} and the graphs $ G_{1,10}^{(2)}$, $ G_{1,11}^{(2)}$, $ G_{1,3}^{(12)}$ represented Figure \ref{repre:graphe-classe2}.
 Let $\mathcal G_{\ell,k}^{(p)}=(V_{\ell,k}^{(p)},\leq_{\ell,k}^{(p)},\rho_{\ell,k}^{(p)})$ be an ordered graph of our list. As $\ell\in\{1, 2\}$, then $\leq_{\ell,k}^{(p)}$ is ordered as $\omega$ or $\omega^{\star}$.  By invariance, the restrictions of $\mathcal G_{\ell,k}^{(p)}$ to the pairs $\{(n,0),(n,1)\}$, $n\in\NN$,  are all isomorphic. According to the description of the graphs given previously, all other pairs are isomorphic together (see $ G_{1,1}^{(1)}$ for an example).To calculate $\varphi_{\ell,k}^{(p)}(r)$ for $r\in\mathbb N$,  consider $r$ distinct vertices ordered w.r.t $\leq_{\ell,k}^{(p)}$. Then, either this chain ends by a pair of the form $\{(n,0),(n,1)\}$ with $n\in\mathbb N$ and, in this case, the number of non isomorphic subgraphs with $r$ vertices is $\varphi_{\ell,k}^{(p)}(r-2)$, or not.  
And in this latter  case,  the number of such subgraphs of order $r$ is $\varphi_{\ell,k}^{(p)}(r-1)$. We then get:

$$\left\{\begin{array}{l}
\varphi_{\ell,k}^{(p)}(0)=\varphi_{\ell,k}^{(p)}(1)=1.\\
\varphi_{\ell,k}^{(p)}(r)=\varphi_{\ell,k}^{(p)}(r-2)+\varphi_{\ell,k}^{(p)}(r-1)\;\text{ for }r\geq 2.\end{array}
\right.$$
\end{proof}

\begin{lemma}\label{lemme:profilexpo}
The profile of the ordered graph $\mathcal G_{\ell,k}^{(p)}$ for $1\leq\ell\leq 2$ with $(p=1\text{ and } 7\leq k\leq 12)$ or $(p=2\text{ and } k\in\{5,6,9\})$ or $(p=3\text{ and } k\in\{4,6,8\})$ or $(p=4\text{ and }k\in\{4,5,7\})$ is given by $\varphi_{\ell,k}^{(p)}(r)=2^{r}-1,~~r\geq 1$.
\end{lemma}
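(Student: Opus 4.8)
The plan is to reduce the enumeration of induced substructures to counting words over the two‑letter alphabet $\{0,1\}$ recording the second coordinate (the \emph{row}) of each vertex, using the invariance of $\mathcal G_{\ell,k}^{(p)}$ established in Claim \ref{order-graphclaim}. Since $\ell\in\{1,2\}$, the chain $(V,\leq)$ is isomorphic to $\omega$ or $\omega^{\star}$, and the two rows $A:=\NN\times\{0\}$ and $B:=\NN\times\{1\}$ interleave as $ABAB\cdots$ (respectively its reverse). First I would record, for each graph in the list, the value of $d$ on the relevant pair‑types. For the particular values of $k$ in the statement, inspection of the edge rules shows that the value $d(x,x')$ of a pair $x<x'$ depends only on the pair of rows $(\mathrm{row}(x),\mathrm{row}(x'))$, and \emph{not} on the blocks; in particular the vertical pair $\{(n,0),(n,1)\}$ carries the same value as any pair $(n,0)<(m,1)$ with $n<m$. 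This is exactly the feature distinguishing the present case from the Fibonacci case of Lemma \ref{lemm-profil1}, where the vertical pair is anomalous. Writing $\alpha,\beta,\gamma,\delta$ for the values of $d$ on pairs of type $(A,A),(B,B),(A,B),(B,A)$, the inspection yields $\alpha=\beta$ (the two rows are isomorphic: an antichain for $p=1$, an $\omega$‑ or $\omega^{\star}$‑chain for $p=2,3$, a reflexive clique for $p=4$) together with $\gamma\neq\delta$.

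Next I would attach to every finite subset $S=\{v_1<\cdots<v_r\}$ the word $w(S):=\mathrm{row}(v_1)\cdots\mathrm{row}(v_r)\in\{0,1\}^r$, and finish in two steps. For \emph{realizability}: since $A$ and $B$ are infinite and interleave, every word in $\{0,1\}^r$ equals some $w(S)$, obtained by reading the word from the bottom and greedily choosing the next available vertex in the prescribed row above the previously chosen one. For \emph{classification}: an isomorphism of ordered structures preserves $\leq$, hence sends the $i$‑th element to the $i$‑th element, so two subsets $S,S'$ are isomorphic iff the matrices $\big(d(v_i,v_j)\big)_{i<j}$ and $\big(d(v'_i,v'_j)\big)_{i<j}$ coincide, i.e. iff $f(w_i,w_j)=f(w'_i,w'_j)$ for all $i<j$, where $f$ is the map $(\mathrm{row},\mathrm{row})\mapsto d$. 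Using $\alpha=\beta$ and $\gamma\neq\delta$ I would argue that $w(S)$ is recovered from this matrix whenever it is non‑constant (a pair with value in $\{\gamma,\delta\}$ pins down the absolute rows of two vertices because $\gamma\neq\delta$, and every other vertex is then located by comparison with one of them), while the two constant words $0^r$ and $1^r$ yield isomorphic substructures, both rows being isomorphic. Hence the $2^r$ words fall into exactly $2^r-1$ isomorphism classes, which is the asserted value $\varphi_{\ell,k}^{(p)}(r)=2^r-1$ for $r\geq1$.

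The main obstacle is the verification, uniform over the roughly thirty graphs covered by the statement, that $d$ is genuinely row‑determined with $\alpha=\beta$ and $\gamma\neq\delta$; this is precisely where the specific choice of $k$ enters. I expect the bookkeeping to be routine but delicate, since several defining rules (e.g. those coinciding with $G_{\ell,4}^{(1)}$, $G_{\ell,5}^{(1)}$, $G_{\ell,6}^{(1)}$ on $A\times B$) do involve the block comparison $n\leq n'$, and one must check each time that this comparison is automatically forced by $x<x'$ and therefore contributes nothing new. The genuinely delicate cases are the \emph{symmetric} rules, where the edge between rows is undirected and hence $\gamma=\delta$ (as for the complete bipartite rule $i\neq i'$): here the identity $f(w_i,w_j)=f(\bar w_i,\bar w_j)$ collapses each word with its complement, so the bare word count produces $2^{r-1}$ classes, and matching the stated $2^r-1$ requires isolating these graphs and treating them with the separate count. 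Pinning down exactly which members of the list are symmetric, and confirming that all the remaining ones satisfy $\gamma\neq\delta$, is therefore the crux of the argument.
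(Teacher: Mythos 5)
Your encoding is exactly the one the paper uses: read off the row word $w(S)\in\{0,1\}^r$ of an $r$-subset, get realizability from the interleaving of $A$ and $B$, and classify via the rigidity of the linear order (the unique order isomorphism sends $i$-th element to $i$-th element). The paper's proof is a two-line version of this which simply asserts that the two constant words are the only collision (``In fact, it is equal''), whereas you make the relevant invariants explicit. One correction to your criterion: to recover a non-constant word from its matrix you need not only $\gamma\neq\delta$ but also $\alpha\notin\{\gamma,\delta\}$ — if $\gamma=\alpha$ the matrix cannot separate a same-row pair from an $AB$-up pair, and this degeneration is exactly what produces the $2^r-r$ counts of Lemma \ref{lemm-profil3} (e.g. $p=2$, $k=4$, where the $r+1$ words $0^a1^{r-a}$ all yield a chain). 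For the members of the present list lying in $\mathfrak A_1^1$ (namely $p=1$, $7\le k\le 9$, and all the listed $k$ for $p=2,3,4$) a direct check of the edge rules gives $\alpha=\beta$, $\gamma\neq\delta$ and $\alpha\notin\{\gamma,\delta\}$, so your argument goes through and yields $2^r-1$ there.

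Your hesitation about the ``symmetric'' members is not a defect of your proof but a genuine defect of the statement, and it reaches further than the one graph you name. The graphs $\mathcal G^{(1)}_{\ell,10}$, $\mathcal G^{(1)}_{\ell,11}$, $\mathcal G^{(1)}_{\ell,12}$ are precisely the $\mathfrak B_1^2$ members of the list, and for such structures Observation \ref{observation2} forces \emph{all} cross pairs (vertical, $AB$-up and $BA$-up alike) to carry one and the same value of $d$: computing from the rules, this value is $(1,0)$ for $k=10$, $(0,1)$ for $k=11$ and $(1,1)$ for $k=12$, while both rows are antichains, so $\alpha=\beta$ and $\gamma=\delta$. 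Hence $f(w_i,w_j)=f(\bar w_i,\bar w_j)$ identically, every word collapses with its complement, and the profile is $2^{r-1}$, not $2^r-1$: already at $r=2$ each of these graphs has exactly two substructures (an ordered non-edge and an ordered cross pair), against the claimed three. Note that $k=10$ and $k=11$ are \emph{not} symmetric rules — the cross arcs are directed — yet they collapse for the same reason, so the failure is not confined to the undirected rule $i\neq i'$. This is consistent with the paper's own computation in Lemma \ref{lemm-profil2}, which assigns the value $2^{r-1}$ to the exactly analogous $\mathfrak B_1^2$ graphs $k\in\{19,20,21\}$ of classes $p=2,3,4$; the graphs $p=1$, $k\in\{10,11,12\}$ belong with that count, and the sentence ``In fact, it is equal'' in the printed proof is false for them. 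So rather than trying to ``match'' $2^r-1$ on the full range, your plan, carried out, proves the lemma on the corrected range and disproves it on the remainder — you should say so explicitly.
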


\begin{proof}
There are thirty ordered  graphs. Nine are represented in Figure  \ref{repre:graphe-classe1} and Figure \ref{repre:graphe-classe2}, namely $ G_{1,k}^{(1)}$, for $7\leq k\leq 12$, and $ G_{1,k}^{(2)}$ for $k\in \{5,6,9\}$. In these cases, we can encode every subgraph with $r$ vertices by a word of length $r$ made of the two letters $\{0,1\}$. Consider $r$ distinct vertices ordered by $\leq_{\ell,k}^{(p)}$. To each vertex  we associate $0$ if  it belongs to $\mathbb N\times\{0\}$ and $1$ if it belongs to $\mathbb N\times\{1\}$, the letters being from left to right. The words in which all letters are identical yield  isomorphic subgraphs, hence  the number of non isomorphic subgraphs with $r$ vertices is
 at most  the number of different words of length $r$ minus $1$. In fact, it is equal.
\end{proof}

\begin{lemma}\label{lemm-profil3}
The profile of the ordered graph $\mathcal G_{\ell,k}^{(p)}$ for $1\leq\ell\leq 2$ with $(p=1, ~k\in\{4,5,6\})$ or $(p=2,~k\in\{4,7,8\})$ or $(p=3,~ k\in\{5,7,9\})$ or $(p=4,~ k\in\{6,8,9\})$ is given by: $\varphi_{\ell,k}^{(p)}(r)=2^{r}-r,~~r\geq 1$.\end{lemma}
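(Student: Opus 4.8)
The plan is to reuse the word encoding from the proofs of Lemma \ref{lemm-profil1} and Lemma \ref{lemme:profilexpo}. Fix one of the twenty-four ordered graphs $\mathcal G_{\ell,k}^{(p)}$ with $\ell\in\{1,2\}$, so that $(V,\leq_{\ell,k}^{(p)})$ is isomorphic to $\omega$ or $\omega^{\star}$. Given $r$ distinct vertices listed in increasing $\leq_{\ell,k}^{(p)}$-order, I encode the induced ordered graph by the word $w\in\{0,1\}^r$ whose $j$-th letter records whether the $j$-th vertex lies in $A=\NN\times\{0\}$ or in $B=\NN\times\{1\}$. By invariance (Claim \ref{order-graphclaim}: the relation between two chosen vertices depends only on their layers and on the relative order of their columns, the same-column and the ascending different-column pattern coinciding), the word $w$ determines the isomorphism type of the induced substructure. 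Since the order-isomorphism between two induced suborders of a finite chain is unique, two words give isomorphic substructures if and only if they induce, position by position, the same relation on every pair; hence $\varphi_{\ell,k}^{(p)}(r)$ is the number of classes of words under this relation, and in particular $\varphi_{\ell,k}^{(p)}(r)\le 2^r$.

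First I would record, for each graph, the relation attached to a word. In every one of these cases the restriction to a single layer is homogeneous (an antichain for $p=1$, a chain $\omega$ or $\omega^{\star}$ for $p=2,3$, a reflexive clique for $p=4$), while the cross-relation between a vertex of $A$ and a vertex of $B$ is of half-graph type: it is non-trivial exactly on \emph{one} of the two order patterns ``$0$ before $1$'' or ``$1$ before $0$'' and trivial on the other. The point is that these combine into one clean description: for $j<j'$ the induced relation on the pair is the generic one (the within-layer relation, or the forward cross-edge) \emph{except} on a single mixed pattern, say when $(w_j,w_{j'})$ equals a fixed pair in $\{(0,1),(1,0)\}$. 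Let $E(w)$ be the set of such exceptional pairs (the forward edges for $G_{\ell,4}^{(1)},G_{\ell,5}^{(1)}$ and $G_{\ell,6}^{(1)}$; the absent edges for the chain and clique versions; the unique descent of the secondary order $\leq'$ for the bichains such as $G_{\ell,7}^{(2)}$). Because the generic part of the relation is the same for all words, the substructure of $w$ is completely recorded by $E(w)$.

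The crux is a rigidity statement: \emph{if $E(w)\neq\emptyset$ then $w$ is uniquely recovered from $E(w)$}. Indeed, any position occurring in some pair of $E(w)$ is labelled by its role (left endpoint versus right endpoint of an exceptional pair), while the positions occurring in no pair of $E(w)$ form exactly a maximal prefix carrying one fixed letter followed by a maximal suffix carrying the other; once $E(w)\neq\emptyset$ these remaining letters are forced, since any other choice would create a new exceptional pair. On the other hand $E(w)=\emptyset$ holds precisely for the $r+1$ monotone words (of the form $1^b0^a$, respectively $0^a1^b$, according to the graph), and all of them induce the same extremal substructure (the empty graph, a transitive tournament, a complete graph, or the chain with $\leq\,=\,\leq'$), hence form a single class. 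Thus the $r+1$ monotone words give one class and the remaining $2^r-(r+1)$ words are pairwise non-isomorphic, so
\begin{equation*}
\varphi_{\ell,k}^{(p)}(r)=\bigl(2^r-(r+1)\bigr)+1=2^r-r,\qquad r\ge 1.
\end{equation*}

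I expect the main obstacle to be the rigidity step, i.e.\ verifying uniformly that every non-monotone word is reconstructible from $E(w)$ and that the only collapse is among the monotone words. This must be checked across the four families (the antichain case $p=1$, the two chain cases $p=2,3$, and the clique case $p=4$), using that dualizing or symmetrizing the cross-relation, or replacing an antichain layer by a chain or a clique, changes neither which words are monotone nor the reconstruction argument. The bookkeeping that exactly the listed values of $k$ yield the half-graph cross-pattern --- as opposed to the \emph{complete} cross-pattern of Lemma \ref{lemme:profilexpo}, for which $E(w)=\emptyset$ only for the two constant words and the count is $2^r-1$ --- is routine from the descriptions in Section \ref{sec:description graphs}.
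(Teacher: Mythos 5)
Your proposal is correct and follows essentially the same route as the paper: encode each $r$-element substructure by a binary word in $\{0,1\}^r$ recording layer membership, observe that exactly the $r+1$ monotone words ($1^q0^{r-q}$ or $0^a1^b$, according to the graph) collapse to a single isomorphism class, and conclude $\varphi_{\ell,k}^{(p)}(r)=2^r-r$. The only difference is that your rigidity argument (reconstructing a non-monotone word from its set $E(w)$ of exceptional pairs, using uniqueness of isomorphisms between finite ordered structures) supplies an explicit justification for the final step that the paper merely asserts with ``In fact it is equal.''
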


\begin{proof} There are twenty four  ordered graphs to consider. Six are represented in Figure  \ref{repre:graphe-classe1} and Figure \ref{repre:graphe-classe2}, namely $ G_{1,k}^{(1)}$, for $k=4,5,6$, and $ G_{1,k}^{(2)}$ for $k=4,7,8$.
In each of these cases, we can also encode, in the same order, every subgraph with $r$ vertices by a binary word of length $r$ as in the proof of lemma \ref{lemme:profilexpo}. For example, for $p=1$, we associate $0$ to each vertex of $\mathbb N\times\{0\}$ and $1$ to each vertex of $\mathbb N\times\{1\}$. If $p=2$ we do the converse. Then all the words of length $r$ of the form $\underset{q}{\underbrace{1\dots1}}\underset{r-q}{\underbrace{0\dots 0}}$ with $q~(0\leq q\leq r)$ are associated to isomorphic subgraphs. Hence, the number of non isomorphic subgraphs with $r$ vertices is at most the number of different words of length $r$ minus $r$. In fact it is equal.\end{proof}

\begin{lemma}\label{lemm-profil2}
The profile of the ordered graph $\mathcal G_{\ell,k}^{(p)}$ for $1\leq\ell\leq 2$ with $(p=1$ and $k\in\{13,14,15\})$ or
$(2\leq p\leq 3$ and $k\in\{13,16,17,19,20,21\})$ or $(p=4$ and $k\in\{15,17,18,19,20,21\})$ is given by $\varphi_{\ell,k}^{(p)}(r)=2^{r-1}$, $r\geq 1$.
\end{lemma}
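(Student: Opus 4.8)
The plan is to follow the same template as in the proofs of Lemmas \ref{lemm-profil1}, \ref{lemme:profilexpo} and \ref{lemm-profil3}. Since each $\mathcal G_{\ell,k}^{(p)}$ is invariant, the isomorphism type of an induced substructure on $r$ vertices is completely determined by finite combinatorial data: the word $w\in\{0,1\}^{r}$ recording, from left to right along the chain $\leq_{\ell,k}^{(p)}$, whether each chosen vertex lies in $A=\NN\times\{0\}$ or in $B=\NN\times\{1\}$, together with, when it is needed, the pattern of \emph{aligned} consecutive pairs $\{(n,0),(n,1)\}$ and, for the graphs of $\mathfrak B_1^1$, the presence and position of the extra vertex $a$. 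First I would record, using Claim \ref{order-graphclaim}, exactly which of these data actually influence adjacency in each graph of the list; this is read off from the descriptions of Section \ref{sec:description graphs}. The goal is then to count the resulting isomorphism classes and to show their number is exactly $2^{r-1}$, which I propose to do by exhibiting in each case a fixed-point-free involution $\iota$ on the set of admissible codes such that $c$ and $\iota(c)$ index isomorphic substructures, and such that two codes give isomorphic substructures if and only if they are equal or exchanged by $\iota$; the number of classes is then $2^{r}/2=2^{r-1}$.

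For the graphs lying in $\mathfrak B_1^2$ (namely $2\le p\le 4$ with $k\in\{19,20,21\}$), the vertex set is $A\cup B$ and the code is simply the word $w\in\{0,1\}^{r}$. Here I would take $\iota$ to be complementation $w\mapsto\overline{w}$, that is, the global swap of the two types $A\leftrightarrow B$. The verification reduces to checking that the adjacency rule of each such graph is invariant under this swap, possibly composed with the reversal of the ambient order; the representative case $G_{\ell,21}^{(2)}\simeq\omega\oplus\omega$, where an ordered pair is an edge precisely when its two endpoints share the same type, makes this transparent, and the remaining graphs of $\mathfrak B_1^2$ are treated by the same swap read off their edge rules. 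Since no binary word equals its complement, $\iota$ has no fixed point and the $2^{r}$ words fall into $2^{r-1}$ complementary pairs.

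For the graphs lying in $\mathfrak B_1^1$ (namely $p=1$ with $k\in\{13,14,15\}$, $2\le p\le 3$ with $k\in\{13,16,17\}$, and $p=4$ with $k\in\{15,17,18\}$) I would instead split according to whether the distinguished vertex $a$ belongs to the chosen subset. When $a$ is omitted one is left with a substructure of the restriction to $A\cup B$; when $a$ is present it acts as a rigid marker, its adjacencies to $A$ and to $B$ singling out the positions of one of the two types, so that such substructures are coded by a word of length $r-1$. For the antichain class $p=1$ this gives the count at once: the $a$-absent substructures are all isomorphic to the chain carrying only its loops, the $a$-present ones are coded freely by $\{0,1\}^{r-1}$, and the all-$0$ code reproduces the $a$-absent type, so that the classes number exactly $2^{r-1}$.

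The step I expect to be the genuine obstacle is the exactness of the count in the alignment-sensitive classes $p\in\{2,3,4\}$, where $A$ and $B$ carry a chain ($\omega$ or $\omega^{\ast}$) or a reflexive-clique structure and the cross-adjacency of $(n,i)$ and $(m,j)$ depends on the comparison of the indices $n$ and $m$. There the naive type word no longer determines the isomorphism type, and one must both rule out spurious extra coincidences, which would drop the count below $2^{r-1}$ toward the Fibonacci regime of Lemma \ref{lemm-profil1}, and confirm that the intended pairing really is an isomorphism, so that the count does not rise to the $2^{r}-1$ of Lemma \ref{lemme:profilexpo}. I would settle this using the uniqueness of isomorphisms between finite ordered structures together with the explicit edge rules, tracking the aligned pairs and, in the $\mathfrak B_1^1$ cases, the adjacencies of $a$. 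The reductions listed in Section \ref{sec:description graphs}, which build each graph of the present list from the handful of template graphs $G_{\ell,10}^{(1)},\dots,G_{\ell,15}^{(1)}$ whose behaviour is already understood, should organize this into a small number of essentially identical checks.
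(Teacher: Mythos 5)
Your skeleton coincides with the paper's proof: code an $r$-subset by the word it traces along the chain, pair complementary words for the graphs of $\mathfrak B_1^2$ ($2\leq p\leq 4$, $k\in\{19,20,21\}$), and split on the presence of $a$ for the graphs of $\mathfrak B_1^1$, exactly as you carry out for $p=1$. The genuine gap is in the remaining eighteen graphs of $\mathfrak B_1^1$ with $2\leq p\leq 4$, and it is not where you locate it. You assert that there \emph{the naive type word no longer determines the isomorphism type} and plan to track the aligned pairs $\{(n,0),(n,1)\}$. In fact the decisive property of every graph in this lemma's list --- stated explicitly in the paper's proof --- is that its restriction to $\NN\times\{0,1\}$ is \emph{monomorphic}: for $13\leq k\leq 18$ the cross edges inherited from $G_{\ell,10}^{(1)}$ (its dual for $p=3$, its symmetrization for $p=4$) combine with the two chains (resp.\ cliques) so that the induced structure on $A\cup B$ is the chain following $\leq$ when $p=2$, its dual when $p=3$, and a reflexive clique when $p=4$. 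Hence index alignment is invisible, the code of an $a$-present subgraph really is a free word of length $r-1$ read off from the adjacencies with $a$, and there are no spurious coincidences to hunt down. This is not a dispensable refinement: if these graphs were alignment-sensitive as you fear, the profile could not equal $2^{r-1}$ at all --- compare Lemma \ref{lemm-profil5}, where precisely the aligned/non-aligned dichotomy for a factor $01$ produces extra isomorphism types and only the lower bound $2^{r-1}$ survives. Your plan as written would have you constructing an involution on a richer code set for which the needed structure simply is not there.

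The second omission is the absorption step for $2\leq p\leq 4$, which you verify only for $p=1$: one must check, graph by graph, that the $a$-absent type (the $r$-chain, its dual, or the $r$-clique) is isomorphic to exactly one $a$-word type --- e.g.\ the all-$1$ word for $\mathcal G_{\ell,13}^{(2)}$, where the edges $a\to(n,1)$ make $a$ the minimum of a chain, and the all-$0$ word for $\mathcal G_{\ell,16}^{(2)}$, where $a\to(n,0)$. This is precisely the dividing line with Lemma \ref{lemm-profil4}: for $\mathcal G_{\ell,14}^{(2)}$ the edges $(n,1)\to a$ run against the order, no $a$-word reproduces the chain, and the count becomes $2^{r-1}+1$. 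Since your proposal never engages with why $k\in\{13,16,17\}$ for $p\in\{2,3\}$ (resp.\ $k\in\{15,17,18\}$ for $p=4$) fall on one side of this line while $k\in\{14,15,18\}$ (resp.\ $\{13,14,16\}$) fall on the other, the exactness of $2^{r-1}$ is not established for those cases. Finally, a caution on the $\mathfrak B_1^2$ side: composing with \emph{the reversal of the ambient order} is not a legitimate move, since isomorphisms of ordered structures must preserve $\leq$; fortunately it is also unnecessary --- for $k\in\{19,20,21\}$ the adjacency of a pair depends only on whether its two vertices have equal or different type, so the position-preserving swap $w\mapsto\overline{w}$ is already an isomorphism, and non-isomorphy for non-complementary words follows from the uniqueness of the order isomorphism between finite chains, as you indicate.
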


\begin{proof}
There are forty two ordered graphs. Nine are represented in Figure  \ref{repre:graphe-classe1} and Figure \ref{repre:graphe-classe2}, namely $ G_{1,k}^{(1)}$, for $k=13,14,15$, and $ G_{1,k}^{(2)}$ for $k=13,16,17,19,20,21$. \\
If $k\not\in\{19,20,21\}$, the vertex set for all other graphs cited in the lemma is $\mathbb N\times\{0,1\}\cup\{a\}$. These graphs  have the particularity to be monomorphic on $\mathbb N\times\{0,1\}$ (that is the restrictions to two subsets with the same cardinality are isomorphic). We can encode every subgraph of length $r$ by a word over the alphabet $\{a,0,1\}$. We associate $0$ to each vertex of $\mathbb N\times\{0\}$ and $1$ to each one of $\mathbb N\times\{1\}$ and we add $a$ in the beginning of the word if the subgraph contains the vertex $a$. All words of length $r$ made only with the two letters $0$ and $1$ yield isomorphic subgraphs. Depending on the graph,  these subgraphs are isomorphic to those associated  to words  of length $r$ which begin  by $a$ and whose  remaining letters are identical (identical to  $0$ for  $\mathcal G_{1,13}^{(1)}$ and  identical to $1$ for others as for $\mathcal G_{1,15}^{(4)}$). 
Then, the number of non isomorphic subgraphs of $r$ vertices is equal to the number of different words of length $r$ beginning by $a$. This number is $2^{r-1}$.\\
If $k\in\{ 19, 20, 21\}$, the vertex set of the graphs is $\mathbb N\times\{0,1\}$. These graphs are such that every subgraph with $q$ vertices from $\mathbb N\times\{0\}$ and $r-q$ vertices from $\mathbb N\times\{1\}$ is isomorphic to one of subgraphs with $r-q$ vertices from $\mathbb N\times\{0\}$ and $q$ vertices from $\mathbb N\times\{1\}$. In term of words, the graphs encoded by $a_1a_2\dots a_r$ and by its complement $\overline{a}_1 \overline{a}_2 \dots \overline{a}_r$ where $\overline{0}=1$ and $\overline{1}=0$, are isomorphic. The result follows.
\end{proof}

\begin{lemma}\label{lemm-profil4}
The profile of the ordered graph $\mathcal G_{\ell,k}^{(p)}$ for $1\leq\ell\leq 2$ and $(p=1,~ k\in\{16,17,18\})$ or $(2\leq p\leq 3,~ k\in\{14,15,18\})$ or $(p=4,~ k\in\{13,14,16\})$ is given by $\varphi_{\ell,k}^{(p)}(r)=2^{r-1}+1,~~r\geq 2$.
\end{lemma}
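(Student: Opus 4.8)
The plan is to reuse the word-coding from the proof of Lemma \ref{lemm-profil2} and to isolate the single point at which the two computations diverge. First I would record that every graph in the statement has vertex set $\NN\times\{0,1\}\cup\{a\}$ and is monomorphic on $\NN\times\{0,1\}$: its restriction to $\NN\times\{0,1\}$ does not involve the arcs incident to $a$, and for each fixed pair $(p,\ell)$ it is the very same restriction as that of a graph already treated in Lemma \ref{lemm-profil2} (for a given $p$ the restrictions to $\NN\times\{0,1\}$ coincide for all the $\mathfrak B_1^1$ graphs). Thus, by invariance, the restriction to any $r$-element subset of $\NN\times\{0,1\}$ is one fixed ordered graph $M_r$, and the order-$r$ subgraphs that avoid $a$ form a single isomorphism class.

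Next I would count the order-$r$ subgraphs containing $a$. Since $\ell\in\{1,2\}$ the order is of type $\omega$ or $\omega^\star$, so $a$ is the least (resp. greatest) vertex, and such a subgraph is coded by a word $aw$ with $w\in\{0,1\}^{r-1}$, the $j$-th letter recording whether the $j$-th remaining vertex lies in $\NN\times\{0\}$ or in $\NN\times\{1\}$. As the underlying order is a finite chain, any isomorphism between two such subgraphs must be the unique order-isomorphism, that is the identity on positions; by invariance it is then an isomorphism exactly when the two words agree. Hence the subgraphs containing $a$ realise precisely $2^{r-1}$ pairwise non-isomorphic classes.

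The decisive step, and the place where the count becomes $2^{r-1}+1$ rather than $2^{r-1}$, is to check that $M_r$ differs from all $2^{r-1}$ classes $aw$. In Lemma \ref{lemm-profil2} the vertex $a$ could be ``disguised'' as an ordinary point of $\NN\times\{0\}$ or $\NN\times\{1\}$, so that $M_r$ was isomorphic to the subgraph coded by $a0^{r-1}$ or $a1^{r-1}$; here the arcs at $a$ forbid this. I would verify it on the distinguished vertices only, which suffices by invariance, using the representatives $\mathcal G_{1,16}^{(1)}$, $\mathcal G_{1,17}^{(1)}$, $\mathcal G_{1,18}^{(1)}$ of Figure \ref{repre:graphe-classe1} and $\mathcal G_{1,14}^{(2)}$, $\mathcal G_{1,15}^{(2)}$, $\mathcal G_{1,18}^{(2)}$ of Figure \ref{repre:graphe-classe2}, the other graphs being entirely similar. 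For instance, for $\mathcal G_{\ell,16}^{(1)}$ the trace $M_r$ is edgeless while in every $aw$ with $r\geq 2$ the vertex $a$ carries an incident arc; for the clique classes $p=4$ the structure $M_r$ is a symmetric clique whereas the arcs incident to $a$ are never symmetric, so again $M_r\not\simeq aw$. Consequently $M_r$ is a new class and $\varphi_{\ell,k}^{(p)}(r)=2^{r-1}+1$ for every $r\geq 2$.

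The main obstacle is exactly this last, finite verification: one must show, graph by graph, that the monomorphic trace $M_r$ cannot be reproduced by adjoining $a$ to an $(r-1)$-element subset. This is what distinguishes the present family from that of Lemma \ref{lemm-profil2} and what produces the extra unit; invariance keeps it manageable, since each instance reduces to inspecting the five vertices $(0,0)$, $(1,0)$, $(0,1)$, $(1,1)$ together with $a$.
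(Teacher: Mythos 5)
Your proof is correct and follows essentially the same route as the paper's: encode $r$-element subgraphs by words over $\{a,0,1\}$, use monomorphy on $\NN\times\{0,1\}$ to collapse all $a$-free subgraphs into one class, obtain $2^{r-1}$ classes from the words beginning with $a$, and observe that—unlike in Lemma \ref{lemm-profil2}—the $a$-free class is not isomorphic to any class $aw$, giving $2^{r-1}+1$. The only difference is that you spell out the final non-isomorphy verification (via the arcs incident to $a$ on the distinguished vertices), which the paper merely asserts; this is a welcome elaboration, not a different method.
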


\begin{proof}
There are twenty four ordered graphs. Six are represented in Figure  \ref{repre:graphe-classe1} and Figure \ref{repre:graphe-classe2}, namely $ G_{1,k}^{(1)}$, for $k=16,17,18$, and $ G_{1,k}^{(2)}$ for $k=14,15,18$.
For all the graphs cited in the lemma the vertex set is $\mathbb N\times\{0,1\}\cup\{a\}$. They also have the particularity to be monomorphic on $\mathbb N\times\{0,1\}$. As previously done in the proof of Lemma \ref{lemm-profil2}, we can encode each subgraph on $r$ vertices by a word over the alphabet $\{a,0,1\}$, with the difference that in this case, the subgraphs of order $r$ whose associated words begin by $a$ and are made with only one letter ($0$ or $1$) are not isomorphic to those whose associated word do not begin by $a$. Then, the number of non isomorphic subgraphs of order $r\geq 2$ is equal to the number of words of length $r$ beginning by $a$ plus one. This gives $2^{r-1}+1$.
\end{proof}

\begin{lemma}\label{lemm-profil5}
$\varphi_{\ell,k}^{(p)}(r)\geq 2^{r-1},~~r\geq 2$ for $1\leq\ell\leq 2$ and $(2\leq p\leq 4,~k\in\{1,2,3\})$.
\end{lemma}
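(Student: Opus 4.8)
The plan is to produce $2^{r-1}$ pairwise non-isomorphic substructures on $r$ vertices for each of the nine ordered graphs in question, exploiting the fact that here the restrictions to $A:=\NN\times\{0\}$ and $B:=\NN\times\{1\}$ are no longer antichains (as in the Fibonacci cases of Lemma \ref{lemm-profil1}) but chains isomorphic to $\omega$ or $\omega^*$, or reflexive cliques, while on $A\times B$ the only edges are the vertical ones joining $(n,0)$ to $(n,1)$ in the same column, exactly as in $G_{\ell,k}^{(1)}$ for $k\in\{1,2,3\}$. First I would isolate the family of substructures that avoid these vertical edges altogether: given a word $w=w_1\cdots w_r\in\{0,1\}^r$, select $r$ vertices lying in $r$ pairwise distinct columns whose sides, read in increasing $\leq_{\ell,k}^{(p)}$-order, spell $w$. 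Since no two chosen vertices share a column, no vertical edge is induced, and this holds simultaneously for $k=1,2,3$.

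For such a substructure $S_w$, listing its vertices in $\leq_{\ell,k}^{(p)}$-order recovers the positions $1,\dots,r$ and hence $w$ itself. Because the structures are ordered, any isomorphism between two substructures is the unique order-preserving bijection, so $S_w$ and $S_{w'}$ are isomorphic exactly when the matrices $\left(d(v_j,v_{j'})\right)_{1\le j<j'\le r}$ coincide. I would then compute this matrix: a pair of same-side vertices (both in $A$ or both in $B$) contributes a fixed nonzero value $c$ (the chain or clique value, which is the same on $A$ and on $B$ in classes $p=2,3,4$), whereas a pair of opposite-side vertices contributes $(0,0)$ since, being in distinct columns, they carry no vertical edge. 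Thus the isomorphism type of $S_w$ is governed precisely by the \emph{same-letter pattern} $\big(\,[\,w_j=w_{j'}\,]\,\big)_{1\le j<j'\le r}$.

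The final step is a counting one: the same-letter pattern of $w$ equals that of $w'$ if and only if $w'=w$ or $w'=\overline{w}$ (the complementary word), and since no word equals its complement, complementation is a fixed-point-free involution on $\{0,1\}^r$ with exactly $2^{r-1}$ orbits. Hence the assignment $w\mapsto S_w$ realizes at least $2^{r-1}$ distinct isomorphism types, giving $\varphi_{\ell,k}^{(p)}(r)\ge 2^{r-1}$ for all $r\ge 2$, uniformly in $\ell\in\{1,2\}$, $p\in\{2,3,4\}$ and $k\in\{1,2,3\}$. The only genuine subtlety, and the reason the bound is phrased as an inequality rather than an equality, is this collapse under complementation: because $A$ and $B$ are order-isomorphic (or both cliques), interchanging the two sides produces isomorphic substructures, so the naive count $2^r$ of words must be halved. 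Incorporating the substructures that do use vertical edges would push the true profile above $2^{r-1}$, but those are not needed for the asserted lower bound.
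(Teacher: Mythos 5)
Your proof is correct, and it takes a genuinely cleaner route than the paper's. The paper also encodes an $r$-element restriction by the word of $\{0,1\}^r$ recording sides along $\leq_{\ell,k}^{(p)}$, but it keeps \emph{all} selections and then analyses collisions, splitting into two cases according to whether the vertical value $d((n,0),(n,1))$ coincides with the common same-side value $c$ (as for $p=2,k=1$; $p=3,k=2$; $p=4,k=3$, where both equal $(1,0)$, $(0,1)$, $(1,1)$ respectively) or differs from it; in the latter case it claims distinct words give non-isomorphic subgraphs except for the staircase pairs $0^q1^{r-q}$, $1^q0^{r-q}$, and recovers extra types from the optional vertical edge at each factor $01$. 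You instead discard vertical edges altogether by placing the $r$ vertices in pairwise distinct columns, so every cross pair contributes $(0,0)$ and every same-side pair the fixed value $c\neq(0,0)$ (indeed the same on $A$ and on $B$, since by the definition of the classes $p=2,3,4$ both sides carry the same chain or clique); the type of $S_w$ is then exactly the same-letter pattern of $w$, and counting words modulo the fixed-point-free complementation involution yields exactly $2^{r-1}$ types, uniformly in $p\in\{2,3,4\}$, $k\in\{1,2,3\}$ and $\ell\in\{1,2\}$, with no case distinction. Notably, your argument is \emph{more} careful than the paper's on one point: the identification $S_w\cong S_{\overline{w}}$ holds for all words, not only staircase ones (e.g.\ $S_{010}\cong S_{101}$, both having $d$-matrix with $c$ on the single same-letter pair $\{1,3\}$ and $(0,0)$ elsewhere), so the paper's assertion that ``two different words are associated to non isomorphic subgraphs, except for $0^q1^{r-q}$ and $1^q0^{r-q}$'' is overstated --- harmlessly so, since halving $2^r$ still leaves the claimed $2^{r-1}$. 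What the paper's route buys, when the vertical value differs from $c$, is implicitly a larger lower bound (of order $2^r$ minus lower-order terms); what yours buys is uniformity, an exact and verifiable count of the vertical-edge-free types, and immunity to the case split. One cosmetic slip: there are eighteen ordered graphs in play, not nine, since $\ell\in\{1,2\}$ doubles the nine pairs $(p,k)$; but as you note, your construction covers both orders by choosing increasing or decreasing columns, so this does not affect the argument.
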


\begin{proof}
There are eighteen ordered graphs. Three are represented in  Figure \ref{repre:graphe-classe2}, namely $ G_{1,k}^{(2)}$, for $k=1, 2,3$.
Note that the first values of the profile of these ordered graphs are: \\ $\varphi_{\ell,k}^{(p)}(0)=\varphi_{\ell,k}^{(p)}(1)=1$, $\varphi_{\ell,k}^{(p)}(2)\in\{2,3\}$ and $\varphi_{\ell,k}^{(p)}(3)\in\{6,8\}$ (eg. $\varphi_{1,1}^{(2)}(2)=2$, $\varphi_{1,1}^{(2)}(3)=6$ and $\varphi_{1,2}^{(2)}(2)=3$, $\varphi_{1,2}^{(2)}(3)=8$). Now, to each subgraph with $r$ vertices, ordered according to $\leq_{\ell,k}^{(p)}$, we can associate a word of length $r$ on $\{0, 1\}$ (we associate $0$ if the vertex is in $A$ and $1$ otherwise). If $p=2, k\in\{2,3\}$ or $p=3, k\in\{1,3\}$ or $p=4, k\in\{1,2\}$, two different words are associated to non isomorphic subgraphs, except for the two words of the forme $\underset{q}{\underbrace{0\dots 0}}\underset{r-q}{\underbrace{1\dots 1}}$ and $\underset{q}{\underbrace{1\dots1}}\underset{r-q}{\underbrace{0\dots 0}}$ with $(0\leq q\leq r)$ which are associated to isomorphic subgraphs, but the words containing the factor $01$ are associated to two non isomorphic subgraphs. Indeed, the factor $01$ corresponds to two vertices $(n,0)$ and $(m,1)$ which comes successively according to $\leq_{\ell,k}^{(p)}$, the case $m=n$ leads to a subgraph which is different from those obtained in the case $m>n$. The result follows. \\
If $p=2, k=1$ or $p=3, k=2$ or $p=4, k=3$, all words of the form $\underset{q}{\underbrace{0\dots 0}}\underset{r-q}{\underbrace{1\dots 1}}$ with $(0\leq q\leq r)$ gives the same subgraph if the factor $01$ corresponds to vertices $(n,0)$ and $(m,1)$ with $m>n$. But for each factor $01$ contained in a given word we have two different subgraphs. As there are more than $r$ words with factors $01$ the result follows.
\end{proof}

\begin{lemma}\label{lemm-profil6}
For $\ell\geq 3$ or $p\geq 5$,  the profile of the graph $\mathcal G_{\ell,k}^{(p)}$ is greater or equal to one of the five functions: $\varphi_1(n):=2^n-1$, $\varphi_2(n):=2^n-n$,  $\varphi_3(n):=2^{n-1}$, $\varphi_4(n):=2^{n-1}+1$ and the Fibonacci function.
\end{lemma}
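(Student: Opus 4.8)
The plan is to reduce every remaining graph to one of the explicit computations already carried out in the preceding lemmas (Lemmas \ref{lemm-profil1}, \ref{lemme:profilexpo}, \ref{lemm-profil3}, \ref{lemm-profil2}, \ref{lemm-profil4} and \ref{lemm-profil5}), using two elementary facts about profiles: they are monotone under embedding, so that $\mathcal H\leq\mathcal G$ gives $\age(\mathcal H)\subseteq\age(\mathcal G)$ and hence $\varphi_{\mathcal H}(n)\leq\varphi_{\mathcal G}(n)$ for all $n$; and they are invariant under duality, $\varphi_{\mathcal G}=\varphi_{\mathcal G^{\ast}}$. Since class $p=6$ is, by construction, the class of duals of the graphs of class $p=5$, and since among the order-types with $\ell\geq 3$ one has $\omega^{\ast}+\omega^{\ast}$ dual to $\omega+\omega$ while $\omega^{\ast}+\omega$ and $\omega+\omega^{\ast}$ are self-dual, duality already removes a substantial part of the bookkeeping.

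\textbf{The case $p\geq 5$.} This case I would settle by a clean refinement argument, \emph{without} changing the linear order. For $p\geq 5$ the graph $\mathcal G_{\ell,k}^{(p)}$ is obtained from the class-$1$ graph with the same cross structure on $A\times B$ by replacing the antichain carried by a block $A$ or $B$ with a chain (isomorphic to $\omega$ or $\omega^{\ast}$) or a reflexive clique. All these added relations are definable from the ambient order $\leq$ (a block-chain is $\leq$ or its reverse restricted to the block, a clique is the full square). Because on an ordered structure the isomorphism between two finite restrictions is the \emph{unique} order-preserving bijection, adding $\leq$-definable relations inside a block can only refine the isomorphism relation on the $r$-element subsets of $\NN\times\{0,1\}$: if the unique order-isomorphism respects the richer relation it a fortiori respects the class-$1$ relation. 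Hence the number of isomorphism types does not decrease, i.e. $\varphi_{\ell,k}^{(p)}\geq\varphi_{\ell,k'}^{(1)}$ for the matching class-$1$ index $k'$, and the right-hand side is already bounded below by one of the five functions by the preceding lemmas.

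\textbf{The case $\ell\geq 3$.} Here the linear order itself changes: the block $A$ is now placed entirely before (or after) $B$, so the order is one of $\omega+\omega$, $\omega^{\ast}+\omega$, $\omega+\omega^{\ast}$, $\omega^{\ast}+\omega^{\ast}$. Neither the refinement trick nor an embedding of a base graph is available, since $\omega+\omega\not\leq\omega$ and $\omega\not\leq\omega+\omega$ as order-types, so I would re-run the encoding of Lemmas \ref{lemme:profilexpo}--\ref{lemm-profil4} directly. The key observation is that, although $\leq$ no longer interleaves $A$ and $B$, for the graphs retained in these lists the cross-edge pattern on $A\times B$ (which coincides with that of $G_{\ell,k}^{(1)}$) still records the comparison of the first coordinates; the degenerate graphs for which it does not, such as $\Delta_{\NN}+\Delta_{\NN}$, are precisely those omitted by the shift in the index correspondence of Section \ref{sec:description graphs}. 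Consequently, by the uniqueness of order-isomorphisms together with invariance under the order-preserving self-maps of $\NN$, the isomorphism type of an $r$-element restriction is determined by the interleaving pattern of the indices of its $A$-vertices and $B$-vertices, which is the very combinatorial datum counted in the earlier proofs. Encoding each restriction by a word over $\{0,1\}$ (or $\{a,0,1\}$ when $\mathcal G_{\ell,k}^{(p)}\in\mathfrak B_1^1$) recording membership in $A$ or $B$ then yields the same lower bounds, one of the five functions of Proposition \ref{prop:profil-ordonne}.

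\textbf{Main obstacle.} The difficulty is not conceptual but lies in the volume and delicacy of the case analysis for $\ell\geq 3$: there are fifteen graphs per order-type, and in each one must verify that the identifications among words do not collapse the count below the announced function. The two recurring sources of collapse are the diagonal pairs $\{(n,0),(n,1)\}$, to be isolated exactly as in the proofs of Lemmas \ref{lemm-profil1} and \ref{lemm-profil2}, and the words all of whose letters are equal, which produce the corrections $-1$, $-n$ or $+1$ appearing in the five functions. After applying duality and the symmetrization relations recorded in Section \ref{sec:description graphs}, the list of genuinely new verifications is short, and each reduces to one of the bookkeeping arguments already performed.
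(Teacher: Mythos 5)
Your handling of the case $\ell\geq 3$ is essentially the paper's: its proof of Lemma \ref{lemm-profil6} likewise just re-runs the word encodings of Lemmas \ref{lemm-profil1}--\ref{lemm-profil5} under the new order types, and your duality remarks are harmless. The genuine gap is in your case $p\geq 5$. You claim that since the within-block relations added for $p\geq 5$ are $\leq$-definable, any isomorphism between finite restrictions of the enriched structure ``a fortiori'' respects the class-$1$ relation, so that the enriched isomorphism equivalence refines the class-$1$ one and $\varphi_{\ell,k}^{(p)}\geq\varphi_{\ell,k'}^{(1)}$. The implication runs the wrong way: the class-$1$ relation can be recovered from the enriched one only if block membership is also known, and block membership is not encoded in an enriched restriction. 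Concretely, take $p=5$, $\ell=1$, $k=4$: there $A=\NN\times\{0\}$ carries the chain induced by $\leq$, $B$ is an antichain, and the cross arcs are those of $G^{(1)}_{1,4}$, namely $(n,0)\to(m,1)$ iff $n\leq m$; since for $\ell=1$ one has $(n,0)<(m,1)$ iff $n\leq m$, the arcs of $\mathcal G^{(5)}_{1,4}$ are exactly the pairs $x\to y$ with $x<y$ and $x\in A$. The restrictions to $\{(0,0),(1,0)\}$ and $\{(0,0),(0,1)\}$ are then isomorphic in $\mathcal G^{(5)}_{1,4}$ (two loops and one arc from the smaller to the larger vertex) but not in $\mathcal G^{(1)}_{1,4}$ (non-edge versus edge). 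So the enriched equivalence is not a refinement, and your central step fails.

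Worse, the inequality you assert is itself false for this graph: by the description of the arcs just given, the induced structure on an $r$-element subset of $\mathcal G^{(5)}_{1,4}$ is completely determined by which of its $r-1$ non-maximal vertices lie in $A$ (the block of the top vertex is invisible), and all $2^{r-1}$ patterns are realized, so $\varphi^{(5)}_{1,4}(r)=2^{r-1}$, which is strictly smaller than $\varphi^{(1)}_{1,4}(r)=2^{r}-r$ (Lemma \ref{lemm-profil3}) for every $r\geq 3$. The lemma survives on this instance only because $2^{r-1}$ happens to be one of the five listed functions; the lower bound for a $p\geq 5$ graph need not be the one attained by its class-$1$ (or class-$2$,$3$,$4$) companion, so no formal monotonicity or refinement argument can deliver the statement. (To be fair, the paper's own one-line justification --- that for $p\geq5$ the two blocks are no longer isomorphic, hence the count is at least that of the $p\leq4$ case --- is equally loose on this very example.) The workable route is the one the paper gestures at and that you adopt for $\ell\geq3$: redo the word-encoding count family by family for $p\geq 5$, identifying in each case which of the five functions is attained or exceeded; the reduction to class $1$ cannot be repaired.
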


\begin{proof}
There are one thousand and eighty ordered graphs. Twenty two, corresponding to $\ell=1, p=5$ are represented Figure \ref{repre:graphe-classe5}.  All  graphs for $p\geq 5$ are deduced from graphs for $p\leq 4$ with the restrictions to  $\mathbb N\times\{0\}$ and $\mathbb N\times\{1\}$ which are not isomorphic. Hence, the number of subgraphs on $r$ vertices is greater than those obtained in case
$p\leq 4$ where the profiles are given by one of the above five functions. For $\ell\geq 3$, the graphs are obtained from those for which $\ell\leq 2$ by changing the linear order $\leq_{\ell,k}^{(p)}$ (for $3\leq\ell\leq 6$ the linear order is one of the orders $\omega+\omega$, $\omega^{\ast}+\omega$, $\omega+\omega^{\ast}$, $\omega^{\ast}+\omega^{\ast}$), the arguments used in the proofs of previous lemmas remain valid.
\end{proof}

\end{document}